\numberwithin{equation}{section}
\newcommand{\B}[1]{|#1|}
\DeclareMathOperator{\Img}{Im}
\newcommand{\Z}{\mathbb{Z}}
\newcommand{\R}{\mathbb{R}}
\newcommand{\C}{\mathbb{C}}
\newcommand{\A}{\mathbb{A}}
\newcommand{\G}{\mathbb{G}}
\newcommand{\Gm}{\mathbb{G}_{\mathrm{m}}}
\newcommand{\GW}{\mathrm{GW}}
\newcommand{\SH}{\mathrm{SH}}
\newcommand{\PP}{\mathbb{P}}
\newcommand{\calK}{\mathcal{K}}
\newcommand{\calC}{\mathcal{C}}
\newcommand{\calH}{\mathcal{H}}
\newcommand{\calF}{\mathcal{F}}
\newcommand{\AAA}{\mathbb{A}^1}
\newcommand{\calD}{\mathcal{D}}
\newcommand{\calS}{\mathcal{S}}
\newcommand{\frakX}{\mathfrak{X}}
\newcommand{\Spec}{\mathrm{Spec}}
\newcommand{\Lmot}{\mathrm{L}_{\mathrm{\mathrm{Mot}}}}
\newcommand{\zk}{Z_K}
\newcommand{\zka}{Z_K^{\A^1}}
\newcommand{\infsusp}{\Sigma_{\mathbb{P}^1}^\infty}
\newcommand{\colim}{colim}
\newcommand{\kmw}{\mathbf{K}^{\mathrm{MW}}}
\renewcommand{\colim}{\operatornamewithlimits{colim}}
\newcommand{\smk}{\mathrm{Sm}_k}
\newcommand{\Shnis}{\mathrm{Sh}_\mathrm{Nis}}
\newcommand{\Hk}{\mathcal{H}(k)}
\theoremstyle{plain} 
\newtheorem{proposition}[equation]{Proposition}
\newtheorem{theorem}[equation]{Theorem}
\newtheorem{corollary}[equation]{Corollary} 
\newtheorem{lemma}[equation]{Lemma} 
\newtheorem{thmx}{Theorem}
\theoremstyle{remark}
\theoremstyle{definition} 
\newtheorem{definition}[equation]{Definition} 
\newtheorem{example}[equation]{Example} 
\newtheorem{remark}[equation]{Remark} 
\begin{document}

\author{William Hornslien \thanks{william.hornslien@ntnu.no}}

	\title{Polyhedral products in abstract and motivic homotopy theory}

 \date{}
	\maketitle
 
 \begin{abstract}
     We introduce polyhedral products in an $\infty$-categorical setting. We generalize a splitting result by Bahri, Bendersky, Cohen, and Gitler that determines the stable homotopy type of the a polyhedral product. We also introduce a motivic refinement of moment-angle complexes and use the splitting result to compute cellular $\AAA$-homology, and $\AAA$-Euler characteristics. 
 \end{abstract}
	\section{Introduction}
Toric geometry is an important part of algebraic geometry. Since its inception in the 70s, it has grown substantially and proven its usefulness in other fields such as combinatorics, commutative algebra, and algebraic statistics. Toric geometry is the study of a certain class of algebraic varieties called toric varieties. These algebraic varieties are defined particularly nice and combinatorially, which makes it easier to do computations and prove theorems. In algebraic geometry, toric varieties are great for testing theories before proving results for larger classes of algebraic varieties. In the past decades, ways of studying toric geometry through the lens of topology have been developed. One way is by using methods from the field of toric topology, which in short, looks at the real and complex points of toric varieties as manifolds and studies their topological properties. Toric topology only considers toric varieties over the real and complex numbers, but what about different bases? Morel and Voevodsky's motivic homotopy category has made it possible to do homotopy theory with smooth algebraic varieties over any base field. In this paper we unite the two topological viewpoints and use methods from toric topology in motivic homotopy theory to study toric geometry over any field. 

The field of toric topology started with work by  Davis and Januszkiewicz \cite{DavisJanus}. They wanted to study a family of manifolds called quasi-toric manifolds. The quasi-toric manifolds were defined by simple polytopes and were homotopic to the complex points of a smooth projective toric varieties. Along with each quasi-toric manifold, they defined an auxiliary space, which we will call $\zk$, and showed that the quasi-toric manifold could be realized as orbit space of a torus (a product of $n$ circles) acting on~$\zk$. This is a topological version of what is known as Cox's construction in algebraic geometry~\cite{CoxHomogCoord}. The space $\zk$ turned out to be an interesting object in its own right, and was named the moment-angle complex by Buchstaber and Panov \cite{BuchPan2000}. They found out that $\zk$ could be described as a union of disks and circles according to the combinatorial data of a simplicial complex (see Section \ref{sec:ClassicalPolyhedral} and \ref{sec:MACs}). The quasi-toric manifolds were quotients of moment-angle complexes associated to the case where the simplicial complex was a triangulation of a sphere. In \cite{BBCG} Bahri, Bendersky, Cohen, and Gitler introduce what they call polyhedral products $(X,A)^K$ for a pair of spaces $A \subset X$  (see Section~\ref{sec:ClassicalPolyhedral} for a precise definition). Moment-angle complexes were unions of disks and circles and coincided with the polyhedral product $(D^2,S^1)^K$. There is also a closely related \emph{real moment-angle complex} $\R Z_K$, which is the polyhedral product $(D^1,S^0)^K$. Results dating all the way back to the 60s, such as a paper by Porter \cite{Porter} could be put into the framework of polyhedral products. Depending on the choice of pairs of spaces $A \subset X$, polyhedral products have connections to surprisingly many fields such as commutative algebra, geometric group theory, and robotics. A survey covering the vast applications and properties of polyhedral products can be found in \cite{PolyhedralHomotopyTheory}. 

Motivic homotopy theory takes place in Morel and Voevodsky's category of motivic spaces $\calH(k)$ over a field $k$ \cite{MorelVoevodsky99}. The category of motivic spaces is a homotopy theory for smooth schemes (algebraic varieties) over a field. See Section \ref{sec:MotivicHomotopy} for a more in-depth explanation. In motivic homotopy theory, the affine line $\AAA$ takes the role of the interval in classical homotopy theory, and is in particular contractible in $\calH(k)$. Motivic homotopy theory allows for using methods from algebraic topology for algebraic varieties, but it usually comes with some complications. One disadvantage with motivic homotopy theory is that it is in general difficult to do explicit computations. For example, the motivic cohomology of a point is only known in special cases \cite{VoevodskyCohomologyZL}. 

We introduce a motivic moment-angle complex $\zka$, which is a motivic refinement of its classical counterpart. The motivic moment-angle complex is roughly speaking a union of $\AAA$'s and $\Gm$'s, which act like disks and circles in $\calH(k)$. One could define polyhedral products directly in $\calH(k)$, but we have chosen to generalize the construction to an abstract homotopical setting. That is, in Definition \ref{def:oopolyhedralprod} we define polyhedral products in a cartesian closed $\infty$-category with small colimits in the sense of Lurie \cite{LurieHTT}. In the~$\infty$-category of topological spaces, this definition recovers the original definition. We then prove an abstract homotopical version of splitting result of Bahri, Bendersky, Cohen, and Gitler \cite{BBCG} concerning how certain polyhedral products split into a wedge of simpler pieces after a suspension (see Theorem \ref{thm:stableSplitting}). Before stating our results, we will need to introduce some notation. Fix a cartesian closed $\infty$-category $\calC$ with small colimits and let $K$ be a simplicial complex. We write $\B{K}$ for the geometric realization $K$ in $\calC$ (see Definition \ref{def:posetRealization} and \ref{def:simpcplxRealization}). By $I\not \in K$, we mean a set $I$ of vertices in $K$ such that they do not span a face in $K$. For a set of vertices $I$, we let $K_I$ be the full subcomplex of~$K$ corresponding to the vertex set $I$. The cardinality of the set $I$ is denoted by $\B{I}$. We denote suspension functor by $\Sigma$, and the smash product of two pointed objects $X,Y$ in~$\calC$ by $X\wedge Y$. The following theorem is a special case of Theorem \ref{thm:stableSplitting}.
\begin{thmx}
    \label{thm:introStableSplit}
			Let $\calC$ be a cartesian closed $\infty$-category with small colimits and fix a morphism $i\colon A \to X$ of pointed objects where $X$ is contractible. Let $K$ be a simplicial complex. Then there is an equivalence 
			\begin{equation*}
				\Sigma (X,A)^K \simeq \Sigma^2  \bigvee_{I\not \in K}  \B{K_I} \wedge A^{\wedge |I|}. 
			\end{equation*}
\end{thmx}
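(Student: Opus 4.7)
My plan is to derive Theorem \ref{thm:introStableSplit} as a direct specialization of the general splitting Theorem \ref{thm:stableSplitting}, which applies to any pointed morphism $A\to X$ without a contractibility hypothesis. That theorem should express $\Sigma(X,A)^K$ as a wedge of ``smash polyhedral product'' summands, one for each nonempty subset $I\subseteq [m]$, each built from the full subcomplex $K_I$. The work then reduces to identifying each summand under the assumption that $X$ is contractible.

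I would split into two cases based on whether $I$ is a face of $K$. When $I\in K$, the full subcomplex $K_I$ is the simplex on the vertex set $I$, and I expect the corresponding summand to be contractible. The idea is that the colimit computing the smash polyhedral product has a terminal piece $A^{\wedge \B{I}}$ coming from the top face, but the contractibility of $X$ forces each lower-dimensional smash product (which contains at least one factor of $X$) to vanish in $\calC_*$, so the whole object collapses; a pushout decomposition of the full simplex into its boundary and its top face makes this precise.

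The main obstacle is the case $I\notin K$, where I want to identify the summand with $\Sigma\B{K_I}\wedge A^{\wedge \B{I}}$. The strategy is to use contractibility of $X$ to replace $X$ (up to homotopy in $\calC_*$) by a cone on $A$, so that the smash polyhedral product becomes the homotopy cofiber of an inclusion of a ``fat wedge'' of smash products of $A$'s into a cone filled out at the top cell. Since $\calC$ is cartesian closed with small colimits, the smash product distributes over colimits, so this cofiber rewrites as $A^{\wedge \B{I}}$ smashed with the suspension of a colimit indexed by the face poset of $K_I$. Identifying this colimit with $\B{K_I}$ via Definition \ref{def:simpcplxRealization}, and observing that the missing top face (i.e., the condition $I\notin K$) is precisely what prevents the cone from filling in, yields the claimed suspension of $\B{K_I}$ smashed with $A^{\wedge \B{I}}$.

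Combining the two cases, the summands with $I\in K$ vanish and one obtains
\begin{equation*}
    \Sigma(X,A)^K \simeq \Sigma\bigvee_{I\notin K} \Sigma\B{K_I}\wedge A^{\wedge \B{I}} \simeq \Sigma^2\bigvee_{I\notin K} \B{K_I}\wedge A^{\wedge \B{I}},
\end{equation*}
which is the statement of the theorem. The technical heart of the argument is thus the $\infty$-categorical manipulation of colimits in the case $I\notin K$, where one must carefully track that the various smash products commute with the relevant colimits and produce the geometric realization $\B{K_I}$ in the expected way.
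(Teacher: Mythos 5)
Your overall skeleton — reduce via the iterated Ganea splitting to a wedge of smash polyhedral products $\widehat{(X,A)}^{K_I}$ over subsets $I\subset[m]$, then identify each summand using contractibility of $X$ — matches the route the paper takes (Proposition \ref{prop:stableSmashProdSplit} followed by specializing Proposition \ref{prop:smashPolyhedralSplit} in the proof of Corollary \ref{cor:stableSplittingContractible}). Two concerns, though.

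First, your explanation of the $I\in K$ case has the roles of the faces inverted. In $\widehat{D}(\sigma)$ the factor at vertex $i$ is $X$ when $i\in\sigma$ and $A$ when $i\notin\sigma$; so the top face $\sigma=I$ gives $X^{\wedge|I|}$ (\emph{not} $A^{\wedge|I|}$), while the empty face gives $A^{\wedge|I|}$, and it is the \emph{larger} faces, not the lower-dimensional ones, that carry $X$-factors. The correct reason the summand vanishes is simpler than your boundary-vs-top-cell pushout: when $I\in K$ the face poset of $K_I\cong\Delta^{|I|-1}$ has a terminal object $I$, so the colimit is just $\widehat{D}(I)=X^{\wedge|I|}\simeq *$. (Alternatively, exactly as in Corollary \ref{cor:stableSplittingContractible}: after applying Proposition \ref{prop:smashPolyhedralSplit} the summand reduces to $\B{K_I}\star A^{\wedge|I|}$, and $\B{K_I}$ is contractible for $I\in K$.)

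Second, for $I\notin K$ your ``replace $X$ by a cone on $A$ and take the cofiber of a fat wedge'' strategy is the classical topological picture, but in the $\infty$-categorical setting it is not obvious how to make this rigorous without an analogue of the initial diagram lemma. What is actually happening is that $X\simeq *$ makes $\widehat{D}(\sigma)\simeq *$ for all $\sigma\neq\emptyset$, and the maps out of $\widehat{D}(\emptyset)=A^{\wedge|I|}$ are null; the colimit of such a diagram over a poset with initial object $\emptyset$ is precisely the join $A^{\wedge|I|}\star\B{(\calK_I)_{<\emptyset}}=A^{\wedge|I|}\star\B{K_I}$, which is Lemma \ref{lem:initialdiagram} (the $\infty$-categorical initial diagram lemma). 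Your argument needs this lemma or a reconstruction of it; as written, the ``cofiber of a fat wedge into a cone'' is a heuristic rather than a proof. The paper's route — plug Proposition \ref{prop:smashPolyhedralSplit} into Proposition \ref{prop:stableSmashProdSplit}, then drop contractible summands — avoids any hands-on cofiber computation and yields the same $\B{K_I}\star A^{\wedge|I|}\simeq\Sigma\B{K_I}\wedge A^{\wedge|I|}$ directly, which is cleaner. So your conclusion is right and the decomposition strategy is the same, but the two case analyses need tightening as above.
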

	The statement above for topological spaces was applied in \cite{BBCG} to describe the stable homotopy type of moment-angle complexes.  

 The motivic moment-angle complex is defined as the polyhedral product $(\AAA,\Gm)^K$, and since $\AAA$ is contractible in $\Hk$, we can apply Theorem \ref{thm:introStableSplit} to $\zka$.  
	\begin{thmx}[Theorem \ref{thm:stableMMACsplit}]
			Let $K$ be a simplicial complex. Then there is an equivalence in $\calH(k)$
			\begin{equation*}
				\Sigma \zka \simeq  \Sigma^2 \left(\bigvee_{I \not \in K} \B{K_I}\wedge \Gm^{\wedge|I|}\right) \simeq \bigvee_{I \not \in K} \B{K_I}\wedge S^{|I|+2,|I|}.
			\end{equation*}
	\end{thmx}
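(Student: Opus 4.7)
The plan is to derive this result by direct application of Theorem \ref{thm:introStableSplit} to the pair $(\AAA,\Gm)$ in $\Hk$, followed by identifying the resulting smash factors with motivic spheres.

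First I would verify that the hypotheses of Theorem \ref{thm:introStableSplit} are met. The $\infty$-category $\Hk$ is cartesian closed and cocomplete, so the abstract polyhedral product of Definition \ref{def:oopolyhedralprod} is defined inside it, and $\zka$ is by construction the polyhedral product $(\AAA,\Gm)^K$ computed in $\Hk$. Taking the basepoint $1 \in \Gm \subset \AAA$ makes the inclusion $i\colon \Gm \hookrightarrow \AAA$ a morphism of pointed objects, and $\AAA$ is contractible in $\Hk$ by $\AAA$-homotopy invariance. Theorem \ref{thm:introStableSplit} therefore produces the first equivalence
\begin{equation*}
\Sigma \zka \simeq \Sigma^2 \bigvee_{I \not \in K} \B{K_I} \wedge \Gm^{\wedge |I|}.
\end{equation*}

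For the second equivalence I would invoke the standard identifications $\Gm \simeq S^{1,1}$ and $\Sigma \simeq S^{1,0} \wedge -$ in $\Hk$, so that $\Gm^{\wedge |I|} \simeq S^{|I|,|I|}$. Distributing the double suspension across each wedge summand then yields
\begin{equation*}
\Sigma^2(\B{K_I} \wedge \Gm^{\wedge |I|}) \simeq \B{K_I} \wedge S^{2,0} \wedge S^{|I|,|I|} \simeq \B{K_I} \wedge S^{|I|+2,|I|},
\end{equation*}
and summing over $I \not\in K$ produces the second equivalence.

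I do not expect a substantive obstacle; the entire content is absorbed into Theorem \ref{thm:stableSplitting}, with the remaining work being bookkeeping between simplicial suspensions, $\Gm$-suspensions, and the bigrading on motivic spheres. The only point warranting a check is that $\Hk$ genuinely sits in the abstract framework of Definition \ref{def:oopolyhedralprod} and that the polyhedral product computed there coincides with the motivic moment-angle complex $\zka$ introduced earlier; both should follow from how $\zka$ was set up in the motivic section of the paper.
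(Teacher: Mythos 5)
Your proposal is correct and follows the same route as the paper: the paper derives the statement directly from Corollary \ref{cor:stableSplittingContractible} (of which Theorem \ref{thm:introStableSplit} is the rephrasing via $X\star Y\simeq\Sigma(X\wedge Y)$), using that $\AAA$ is contractible in $\Hk$ and then identifying $\Sigma^2(\B{K_I}\wedge\Gm^{\wedge|I|})$ with $\B{K_I}\wedge S^{|I|+2,|I|}$ exactly as you do.
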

	We are now able to compute various motivic invariants of $\zka$. In \cite{morel2023cellular} Morel and Sawant introduced cellular $\A^1$-homology which is valued in strictly $\AAA$-invariant sheaves of abelian groups. They compute the cellular $\A^1$-homology of punctured affine spaces, projective spaces, and also low dimensional homology groups of some flag varieties. We use the stable splitting to describe the cellular $\A^1$-homology of $\zka$ in terms of~$K$. Let~$\kmw_{i}$ denote the~$i$th unramified Milnor--Witt $K$-theory sheaf. Let $ \widetilde{\mathbf{H}}_{i}(|K|)$ be the~$i$th reduced singular homology singular homology group of $|K|$ viewed as a sheaf of abelian groups.
	
	\begin{thmx}[Theorem \ref{thm:cellularHomology}]
			Let $K$ be a simplicial complex. Then $\mathbf{H}_0^{\mathrm{cell}}(\zka)=\Z$ and for $i>0$ 
            \begin{equation*}
                \mathbf{H}_i^{\mathrm{cell}}(\zka)\cong \bigoplus_{I \not \in K} \widetilde{\mathbf{H}}_{i-1}(|K_I|)\otimes \kmw_{|I|}.
            \end{equation*}
	\end{thmx}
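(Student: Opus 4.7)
The plan is to apply the functor $\mathbf{H}_*^{\mathrm{cell}}$ to the stable splitting of Theorem \ref{thm:stableMMACsplit} and identify the cellular $\A^1$-homology of each wedge summand with the desired tensor product. The computation breaks into two separate tasks: handling $\mathbf{H}_0^{\mathrm{cell}}$, and using the splitting together with a suspension/K\"unneth argument to handle positive degrees.

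For the base case $i=0$, I would argue that $\zka$ is path-connected in $\Hk$: it is a colimit of pointed motivic spaces built from copies of $\AAA$ and $\Gm$ glued along a common basepoint, and all pieces are connected. Hence $\mathbf{H}_0^{\mathrm{cell}}(\zka) = \Z$. For $i>0$, I would use the suspension isomorphism for cellular $\A^1$-homology, namely $\mathbf{H}_i^{\mathrm{cell}}(\zka) \cong \widetilde{\mathbf{H}}_{i+1}^{\mathrm{cell}}(\Sigma \zka)$, combined with the splitting from Theorem \ref{thm:stableMMACsplit} and the wedge axiom to obtain
\begin{equation*}
\mathbf{H}_i^{\mathrm{cell}}(\zka) \cong \bigoplus_{I \not\in K} \widetilde{\mathbf{H}}_{i+1}^{\mathrm{cell}}\bigl(|K_I| \wedge S^{|I|+2, |I|}\bigr).
\end{equation*}

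The next step is to compute cellular $\A^1$-homology of each smash product. Here I would use that the motivic sphere $S^{p,q}$ contributes $\kmw_q$ concentrated in a single degree, so that smashing with $S^{|I|+2,|I|}$ amounts to shifting the cellular $\A^1$-homology appropriately while tensoring with $\kmw_{|I|}$. The remaining factor is the motivic realization $|K_I|$, whose cellular $\A^1$-homology I would identify with the classical (reduced) singular homology of the topological realization of $K_I$, viewed as a constant sheaf of abelian groups. This identification is plausible since $|K_I|$ is assembled from contractible pieces in $\Hk$ with the same combinatorial pattern as the topological realization, so its cellular chain complex in the Morel--Sawant sense should coincide with the usual simplicial chain complex.

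The main obstacle will be verifying in detail the K\"unneth-type statement for cellular $\A^1$-homology with respect to smash products with motivic spheres, since cellular $\A^1$-homology takes values in strictly $\A^1$-invariant sheaves and the tensor product involved must be the right one. A close second is the identification of $\mathbf{H}_*^{\mathrm{cell}}(|K_I|)$ with singular homology of the topological realization; once both of these are in place, the proof reduces to matching indices between the splitting of $\Sigma\zka$ and the claimed formula, which is a routine bookkeeping exercise.
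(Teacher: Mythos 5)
Your proposal is correct in outline but follows a genuinely different path than the paper's primary proof. You apply $\mathbf{H}_*^{\mathrm{cell}}$ to the stable splitting $\Sigma\zka \simeq \bigvee_{I\not\in K}|K_I|\wedge S^{|I|+2,|I|}$ (Theorem \ref{thm:stableMMACsplit}), use a suspension isomorphism to shift indices, and then handle each wedge summand via a K\"unneth-type argument for smashing with a motivic sphere. The paper's main proof instead works entirely at the chain level from the start: it first uses that $C_*^{\mathrm{cell}}$ preserves products and colimits of cohomologically trivial objects to rewrite $C_*^{\mathrm{cell}}(\zka)$ as the polyhedral product $(\Z, \Z\oplus\kmw_1)^K$ in $D(Ab_{\AAA}(k))$ (Proposition \ref{prop:zkachaincomplex}), and then proves an \emph{unstable} wedge splitting of that chain complex directly via Lemma \ref{lem:initialdiagram} --- crucially bypassing the suspension. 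Homology is then extracted via explicit chain-level computations (Lemmas \ref{lem:homologyGMsmash} and \ref{lem:homologySusp}). The paper even flags your route as an alternative in Remark \ref{thm:cellularHomology}$+$3 (the remark following Example \ref{ex:RP2TriangHomology}), noting that one can apply Corollary \ref{cor:stableSplittingContractible} to the chain complex and account for the extra suspension. What your approach buys is conceptual clarity (it is literally ``take homology of the splitting''); what the paper's approach buys is avoiding the index-shifting bookkeeping and, more importantly, avoiding the need to justify that cellular $\A^1$-homology extends functorially to motivic spaces such as $\Sigma\zka$ and $|K_I|\wedge S^{p,q}$ which are not smooth schemes --- the paper sidesteps this by working with $C_*^{\mathrm{cell}}$ as a colimit-preserving functor into $D(Ab_{\AAA}(k))$ throughout. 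Your two acknowledged obstacles (the K\"unneth-type statement for smashing with motivic spheres, and the identification of $\mathbf{H}_*^{\mathrm{cell}}(|K_I|)$ with singular homology as a constant sheaf) are exactly what Lemma \ref{lem:homologyGMsmash} and the remark preceding Proposition \ref{prop:zkachaincomplex} supply, so your risk assessment is accurate; filling these in would complete your argument.
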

    To prove the theorem above, one actually needs to apply Theorem \ref{thm:introStableSplit} to the derived category of chain complexes of $\AAA$-invariant sheaves. Note that this theorem also gives examples of varieties with integral torsion in their cellular $\AAA$-homology groups, e.g.\ when $K$ is a triangulation of $\R \PP^2$ (see Example \ref{ex:RP2TriangHomology}).

  There is a motivic version of the Euler characteristic called the $\AAA$-Euler characteristic. By using the stable splitting we compute the $\AAA$-Euler characteristic for $\zka$ in terms of $K$. The $\AAA$-Euler characteristic is valued in $\GW(k)$ i.e.\ the Grothendieck--Witt ring of quadratic forms over $k$.
	
	\begin{thmx}[Theorem \ref{thm:A1EulerChar}]
			The $\A^1$-Euler characteristic of the motivic moment-angle complex is 
			\begin{equation*}
				\chi_{\A^1}(\zka) =  \langle 1 \rangle - \sum_{I \not \in K} (-1)^{|I|}(\chi(K_I)-1)\cdot \langle -1 \rangle^{|I|}.
			\end{equation*}
	\end{thmx}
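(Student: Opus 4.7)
\emph{Proof plan.} The strategy is to apply Theorem \ref{thm:stableMMACsplit} in $\SH(k)$ and compute term-by-term using the standard properties of the $\A^1$-Euler characteristic. Write $\tilde{\chi}_{\A^1}(X) := \chi_{\A^1}(\Sigma^\infty X) \in \GW(k)$ for a pointed $X \in \Hk$; this invariant is additive on cofiber sequences, multiplicative on smash products, and satisfies $\tilde{\chi}_{\A^1}(\Sigma X) = -\tilde{\chi}_{\A^1}(X)$. From $\chi_{\A^1}(\PP^1) = 1 + \langle -1\rangle$ and $\PP^1 \simeq \Sigma \Gm$ one deduces $\tilde{\chi}_{\A^1}(\Gm) = -\langle -1\rangle$, and hence
\begin{equation*}
\tilde{\chi}_{\A^1}(S^{p,q}) = (-1)^p\langle -1\rangle^q.
\end{equation*}

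The main auxiliary input is the identity $\chi_{\A^1}(\B{L}) = \chi(L)\cdot \langle 1\rangle$ for any simplicial complex $L$, where $\chi(L)$ denotes the classical Euler characteristic. I would prove it by induction on the number of faces of $L$: attaching a top-dimensional simplex $\sigma$ to $L'$ sits in a cofiber sequence $\B{\partial\sigma}\to\B{L'}\to\B{L}$ whose cofiber is the $\A^1$-contractible simplex $\sigma$, so additivity and the inductive hypothesis produce the formula. In particular, $\tilde{\chi}_{\A^1}(\B{K_I}) = (\chi(K_I)-1)\cdot \langle 1 \rangle$.

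Assembling the ingredients, Theorem \ref{thm:stableMMACsplit} together with additivity on wedges and multiplicativity on smash products yields
\begin{equation*}
\tilde{\chi}_{\A^1}(\Sigma \zka) = \sum_{I \not\in K}(\chi(K_I) - 1)\cdot (-1)^{|I|+2}\langle -1\rangle^{|I|} = \sum_{I \not\in K}(\chi(K_I) - 1)(-1)^{|I|}\langle -1\rangle^{|I|}.
\end{equation*}
Applying $\tilde{\chi}_{\A^1}(\Sigma \zka) = -\tilde{\chi}_{\A^1}(\zka)$ and converting back to the unreduced invariant via $(\zka)_+ \simeq \zka \vee S^0$, giving $\chi_{\A^1}(\zka) = \tilde{\chi}_{\A^1}(\zka) + \langle 1\rangle$, delivers the claimed formula. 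The main obstacle is the realization lemma $\chi_{\A^1}(\B{L}) = \chi(L)\langle 1\rangle$: the induction is routine in spirit, but requires a careful identification of the $\infty$-categorical realization of $L$ in $\Hk$ as a finite cell object built from $\A^1$-contractible simplices via compatible cofiber sequences, so that the classical inclusion--exclusion for $\chi$ lifts to $\GW(k)$.
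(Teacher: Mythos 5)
Your proposal is correct and follows essentially the same route as the paper: apply Theorem \ref{thm:stableMMACsplit}, use additivity on wedges and multiplicativity on smash products (the paper packages these as Lemmas \ref{lem:eulCharWedge} and \ref{lem:eulCharSmash}, both quoting May), invoke $\chi_{\A^1}(\B{L}) = \chi(L)\langle 1\rangle$, and undo the suspension. The only difference is that you work with the reduced invariant $\tilde{\chi}_{\A^1}$, which streamlines the bookkeeping by eliminating the correction terms the paper tracks explicitly; this is a presentational choice, not a different argument. On your final worry: the realization lemma can also be dispatched without the cell-by-cell induction by noting (as the paper does in the dualizability lemma) that $\B{L}$ is the image of a finite CW-spectrum under the symmetric monoidal functor $\SH \to \SH(k)$, which preserves traces, so $\chi_{\A^1}(\B{L})$ is the image of $\chi(L) \in \Z = \mathrm{End}(\mathbf{1}_{\SH})$ under $\Z \to \GW(k)$, $1 \mapsto \langle 1\rangle$.
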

	The theorem above allows us to describe the $\AAA$-Euler characteristic of $\zka$ in terms of the topological Euler characteristic of full subcomplexes of $K$. In classical topology, knowing the (co)homology of a space is sufficient for determining the Euler characteristic, but this is not the case in motivic homotopy theory. When we view $\chi_{\AAA}(\zka)$ as a quadratic form over $\R$, the rank of $\chi_{\AAA}(\zka)$ is~$\chi(\zk)$ and the signature of  $\chi_{\AAA}(\zka)$ is~$\chi(\R\zk)$. This result examplifies why $\zka$ is a motivic refinement of $Z_K$ and $\R\zk$. 
 
 The paper is structured as follows. In Section \ref{sec:polyhedralProductFunctor} the classical definition of a polyhedral product and various classical results are recalled. In Section \ref{sec:inftysetup}, we define polyhedral products in~$\infty$-categories. Theorem \ref{thm:introStableSplit} is then proven in Section \ref{sec:stableSplit}. In Section \ref{sec:exampleCats}, we review polyhedral products in equivariant and motivic homotopy theory. We also define motivic moment-angle complexes. In Section \ref{sec:affine}, we give various smooth models of motivic moment-angle complexes, and briefly study their connection to toric varieties. Section~\ref{sec:motivicInvariants} is dedicated to computing various invariants of the motivic moment-angle complexes.
     \subsection*{Acknowledgements}
	The author would like to thank Markus Szymik for many discussions, feedback along the way. The author thanks Gereon Quick for comments on the manuscript. The author would also like to thank Fernando Abellan, Marc Hoyois, Abigail Linton, Louis Martini, Clover May, Fabien Morel,  Marius Nielsen, Paul Arne Østvær, Oliver Röndigs, and Markus Spitzweck for helpful discussions regarding the project.  The author thanks the project \emph{Pure Mathematics in Norway} funded by the Trond Mohn Foundation for economic support.

    \subsection*{Conventions}
    By a topological space we mean a $CW$-complex. Starting in Section \ref{sec:polyhedralProductFunctor}, we will use the language of $\infty$-categories as developed by Lurie in \cite{LurieHTT}. By $\infty$-category, we mean an~$(\infty,1)$-category. When necessary, we will implicitly view $1$-categories as $\infty$-categories through the nerve embedding. From Section \ref{sec:exampleCats}, we will assume $k$ to be  a perfect field of characteristic not equal to $2$. 
	\section{Polyhedral products}
 \label{sec:polyhedralProductFunctor}

 In this section we give a brief overview of some core properties of the polyhedral product functor and review some classical results. 
 \subsection{Preliminaries on simplicial complexes}

 \begin{definition}
     Let $m$ be a positive integer. An abstract simplicial complex $K$ is a family of subsets of $[m]:= \{1, \ldots, m\}$ that is closed under taking subsets. In more geometric terms, $K$ is a simplicial complex with $m$ vertices labeled by the set~${[m]=\{1,\ldots, m\}}$. A~$(n-1)$-face $\sigma$ of $K$ is given by a subset $\sigma = \{i_1, \ldots, i_n\}$ with $1\leq i_1 < \ldots < i_n \leq m$. All subsets $\tau \subset \sigma$ define faces in $K$ as well. In particular, $K$ includes the empty face $\emptyset$. 
 \end{definition}
Whenever we say simplicial complex, we mean an abstract simplicial complex. We will now define an important family of subcomplexes. 
\begin{definition}
    Let $K$ be a simplicial complex and $I\subset [m]$. The full subcomplex $K_I$ consists of all faces of $K$ that have their vertex set as a subset of $I$, i.e.\  
    \begin{equation*}
        K_I := \{\sigma \cap I | \sigma \in K\}.
    \end{equation*}
\end{definition}
Simplicial complexes can be seen as topological spaces, this is done by geometric realization. 
\begin{definition}
\label{def:RealizationSpacesK}
    Denote the geometric realization of $K$ as a topological space by $\B{K}$.
\end{definition}
 A simplicial complex comes with the natural structure of a poset. That is, the face poset ordered by inclusion. Each face represents an object and if $\sigma$ is a subface of $\tau$, then $ \sigma < \tau$ in the poset. To this poset is an associated category, which will be essential going forward.
\begin{definition}
       Let $K$ be a simplicial complex. The \emph{face poset category} $\calK$ is defined as follows. The objects of $\calK$ are given by the simplices of $K$, including an initial object~$\emptyset$ which corresponds to the empty face. Let $\sigma, \tau \in K$ be two simplices of $K$. If $\sigma$ is a subface of $\tau$, then there is a unique morphism $f_{\sigma \leq \tau }\colon \sigma \to \tau$. Let $I \subset [m]$, we denote the face poset category of $K_I$ by $\calK_I$.
\end{definition}
   The following two constructions are central in combinatorics. 

   \begin{definition}
   \label{def:StanleyReisnerRing}
       Let $\mathbf{k}$ be a ring. For a simplicial complex $K$, we define the \emph{Stanley--Reisner ideal} $I_K$ as the square-free monomial ideal corresponding to non-faces of $K$, i.e.\
  \begin{equation*}
      I_K = (x_{i_1}  \ldots x_{i_r} | \{i_1, \ldots ,i_r\} \not \in K).
  \end{equation*}
  We define the \emph{Stanley--Reisner ring} as the quotient
  \begin{equation*}
      \mathbf{k}[K] := \mathbf{k}[x_1,\ldots, x_m]/I_K.
  \end{equation*}
   \end{definition}
   The Stanley--Reisner ring has connections to fields such as toric geometry, polytopes, and splines \cite[Chapter III]{StanleyCombinatoricsComalg}. 
    
    \begin{definition}
        To a simplicial complex $K$, the \emph{Alexander dual} $K^\vee$ is the simplicial complex whose faces are complements of non-faces of $K$, i.e.\ \begin{equation*}
      K^\vee := \{ \sigma \in [m] | [m] \setminus \sigma \not \in K \}.
  \end{equation*}
    \end{definition}
    See \cite[Example 2.26 and Corollary 2.28]{BuchPan3} for further results on Alexander duality of simplicial complexes. 
  
\begin{example}
\label{ex:squareCplx}
    Let $K$ be the boundary of a square, i.e.\
    \begin{equation*}
        K = \{\emptyset, \{1\},\{2\},\{3\},\{4\},\{1,3\},\{2,3\},\{2,4\},\{1,4\}\}.
        \end{equation*}
        \begin{equation*}
        % https://tikzcd.yichuanshen.de/#N4Igdg9gJgpgziAXAbVABwnAlgFyxMJZABgBpiBdUkANwEMAbAVxiRAEYQBfU9TXfIRTtyVWoxZsAzN14gM2PASJl2Y+s1aIQAFll9FgoiLXUNk7QCZuYmFADm8IqABmAJwgBbJGRA4ISJbUABYwdFBskGCsPK4e3oi+-kgiIKHhkQQxcu5eKdTJiFIhYRHaUdlxeYhBfgFFJRnlWTZcQA
\begin{tikzcd}
1 \arrow[d, no head] \arrow[r, no head] & 3 \arrow[d, no head] \\
4 \arrow[r, no head]                    & 2                   
\end{tikzcd}
    \end{equation*}
    Then 
    \begin{equation*}
        \mathbf{k}[K]=\mathbf{k}[x_1,x_2,x_3,x_4]/(x_1x_2,x_3x_4)
    \end{equation*}
    and the Alexander dual is the simplicial complex 
    \begin{equation*}
        K^\vee = \{\emptyset, \{1\},\{2\},\{3\},\{4\},\{1,2\},\{3,4\}\}.
    \end{equation*}
    \begin{equation*}
        % https://tikzcd.yichuanshen.de/#N4Igdg9gJgpgziAXAbVABwnAlgFyxMJZABgBpiBdUkANwEMAbAVxiRAEYQBfU9TXfIRTtyVWoxZsAzN14gM2PASJl2Y+s1aIQAJll9FgoiLXUNk7QBZuYmFADm8IqABmAJwgBbJCJA4ISFLUABYwdFBskGCsPK4e3ohkfgGIOiFhEdpRMRRcQA
\begin{tikzcd}
1 \arrow[d, no head] & 3 \arrow[d, no head] \\
2                    & 4                   
\end{tikzcd}
    \end{equation*}
    Note the swap of position of vertices 2 and 4 in the picture. 
\end{example}
\subsection{The classical construction}
\label{sec:ClassicalPolyhedral}
	Polyhedral products were first defined by Bahri, Bendersky, Cohen, and Gitler in \cite{BBCG}. A good survey of the work done and its connections to other fields can be found in \cite{PolyhedralHomotopyTheory}. Let 
 \begin{equation*}
     (\underline{X},\underline{A})=((X_1,A_1) ,\ldots , (X_m,A_m))
 \end{equation*} be a sequence of $m$ pairs of pointed topological spaces $A_i \subset X_i$. Let $K$ be a simplicial complex on the vertex set $[m]=\{1, \ldots, m\}$. The polyhedral product $(\underline{X},\underline{A})^K$ is defined as the following union
	
	\begin{equation*}
		(\underline{X},\underline{A})^K= \bigcup_{\sigma \in K}D(\sigma) \subset \prod_{i=1}^m X_i,
	\end{equation*}
	where 
	\begin{equation*}
		D(\sigma) = \prod_{i=1}^m Y_i \quad \text{where} \quad Y_i=\begin{cases}X_i & \text{if } i \in \sigma, \\ A_i & \text{if } i \not\in \sigma.\end{cases}
	\end{equation*}
	\begin{remark}
	    When all pairs $(X_i,A_i)$ are the same pair and $K$ is a simplicial complex, we will write $(X,A)^K$ for the associated polyhedral product.
	\end{remark}
 A wide array of spaces can be created as polyhedral products. 
	\begin{example}
		
		Let $K$ be a simplicial complex with two disjoint vertices. \label{PolyhedraproductEx}
		\begin{enumerate}
			\item $(D^1,S^0)^K = D^1 \times S^0 \cup S^0 \times D^1 \simeq S^1$ 
			\item $(D^2,S^1)^K = D^2 \times S^1 \cup S^1 \times D^2 \simeq S^3$ 
			\item $(S^1,*)^K  = S^1 \times * \cup * \times S^1 \simeq S^1\vee S^1$
		\end{enumerate}
	\end{example}
 \begin{example}
		
		Let $K$ be a disjoint union of $m$ points. Then there is a homotopy equivalence $(\underline {X},*)^K = X_1 \vee \ldots \vee X_m.$

	\end{example}
 In this case, one can observe how $(\underline {X},*)^K$ interpolates between $X_1 \times \ldots \times X_m$ and~$X_1 \vee \ldots \vee X_m$ when $K$ ranges from a full $(m-1)$-simplex to $m$ disjoint points. 
	The following proposition tells us how certain operations with simplicial complexes affect the polyhedral products.
	\begin{proposition}[{\cite[Proposition 4.2.5]{BuchPan}}]
		Let $K$ and $K'$ be two simplicial complexes and let $K\star K'$ denote their join. Then $(X,A)^K \times (X,A)^{K'} \simeq (X,A)^{K\star K'}.$
	\end{proposition}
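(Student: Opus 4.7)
The plan is to reduce the statement to a purely set-theoretic distributive identity by unpacking the definitions on both sides. If $K$ has vertex set $[m]$ and $K'$ has vertex set $[m']$, then the join $K \star K'$ is by definition the simplicial complex on $[m] \sqcup [m']$ whose faces are exactly the sets $\sigma \sqcup \sigma'$ for $\sigma \in K$ and $\sigma' \in K'$. The ambient product splits accordingly as $\prod_{i=1}^{m+m'} X_i \cong \prod_{i=1}^m X_i \times \prod_{j=1}^{m'} X_j$.

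The first step is to check that the cube associated to a face of the join factors compatibly, i.e.\ that $D(\sigma \sqcup \sigma') = D(\sigma) \times D(\sigma')$ as subspaces of the product under the splitting above. This is immediate from the defining formula for $D(-)$: a coordinate indexed by $i \in [m]$ is $X$ precisely when $i \in \sigma$, independently of $\sigma'$, and symmetrically for coordinates in $[m']$.

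The second step is to apply the distributive identity $\bigl(\bigcup_\alpha U_\alpha\bigr) \times \bigl(\bigcup_\beta V_\beta\bigr) = \bigcup_{\alpha,\beta} U_\alpha \times V_\beta$, valid for arbitrary collections of subsets of a product, to obtain
\begin{equation*}
(X,A)^K \times (X,A)^{K'} = \Bigl(\bigcup_{\sigma \in K} D(\sigma)\Bigr) \times \Bigl(\bigcup_{\sigma' \in K'} D(\sigma')\Bigr) = \bigcup_{\sigma \in K,\, \sigma' \in K'} D(\sigma) \times D(\sigma') = (X,A)^{K \star K'}.
\end{equation*}

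No homotopical input is needed; the equivalence $\simeq$ here is in fact a homeomorphism of subspaces of the product. The only real obstacle is bookkeeping around the identification of vertex sets under the join operation, which presents no genuine difficulty.
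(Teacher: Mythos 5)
Your proof is correct, and it takes a genuinely more elementary route than the argument the paper gives for the $\infty$-categorical version of this statement (Proposition \ref{prop:JoinProductClassical}). You exploit the fact that the classical polyhedral product is \emph{literally} a union of subspaces of the ambient product $\prod_i X_i$, so after checking the compatibility $D(\sigma \sqcup \sigma') = D(\sigma) \times D(\sigma')$ the claim reduces to the set-theoretic distributive law $\bigl(\bigcup_\alpha U_\alpha\bigr) \times \bigl(\bigcup_\beta V_\beta\bigr) = \bigcup_{\alpha,\beta} U_\alpha \times V_\beta$; as you note, this gives an actual homeomorphism, which is stronger than the stated homotopy equivalence. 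The paper, by contrast, wants this result in an arbitrary cartesian closed $\infty$-category (ultimately $\calH(k)$), where there is no ambient object and no notion of union of subspaces; the polyhedral product is defined there as a colimit over the face poset category $\calK$. Accordingly, the paper's proof pulls the cartesian product inside the colimit using that $X \times {-}$ preserves colimits in a cartesian closed category, rewrites the resulting iterated colimit over $\calK$ and $\calK'$ as a single colimit over the product poset, and identifies that with the face poset of $K \star K'$. Your argument buys simplicity and a sharper conclusion in the topological setting, but it relies crucially on having honest subspaces; the paper's abstract argument is what is needed to carry the result into $\calH(k)$, and your argument can be seen as the specialization of it to spaces with the cofibration hypothesis making the homotopy colimit an honest union.
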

	\begin{example}
		
		\label{PolyhedraproductExTorus}
		Let $K_1 = \{\emptyset, \{1\}, \{2\}\}$, $K_2 = \{\emptyset, \{3\}, \{4\}\}$ be simplicial complexes. In this case $K_1$ and $K_2$ are complexes consisting of two disjoint points. Let $K = K_1 \star K_2$, then $K$ is a complex  shaped like the boundary of a square as in Example \ref{ex:squareCplx}. 
		\begin{enumerate}
			\item $(D^1,S^0)^K \simeq S^1\times S^1$
			\item $(D^2,S^1)^K  \simeq S^3\times S^3$
		\end{enumerate}
	\end{example}
	
	\begin{example}
		Let $K = \partial \Delta^n$.
		\begin{enumerate}
			\item $(D^1,S^0)^K \simeq S^n$
			\item $(D^2,S^1)^K \simeq S^{2n+1}$
		\end{enumerate}
	\end{example}
	\begin{example} [{\cite[Corollary 9.7] {GrbicTheriault07}}]
		Let $K$ be the disjoint union of $m$ points. Then there is a homotopy equivalence $(D^2,S^1)^K \simeq \bigvee_{l=2}^m (S^{l+1})^{\vee (l-1)\begin{psmallmatrix}m \\ l\end{psmallmatrix}}$.
	\end{example}
 The following two results are due to Bahri, Bendersky, Cohen, and Gitler. In particular, they show how certain polyhedral products split into a wedge of nice pieces after suspending. Let $\Sigma$ denote the reduced suspension functor.  When $I = \{i_1, \ldots, i_n\} \subset [m]$ and $Y_1, \ldots, Y_m$ are pointed topological spaces, we will write $\widehat{Y}^I := Y_{i_1} \wedge \ldots \wedge Y_{i_k}$.
 \begin{theorem}[{\cite[Theorem 2.15]{BBCG}}]
    Let $K$ be a simplicial complex on the vertex set~$[m]$ and let $(\underline{X},\underline{A})$ be a family of pairs. If $A_i$ is contractible for each $1\leq i \leq m$, then
    \begin{equation*}
        \Sigma (\underline{X},\underline{A})^K \simeq \Sigma \bigvee_{I \in K}\widehat{X}^I.
    \end{equation*}
 \end{theorem}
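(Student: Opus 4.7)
The plan is to reduce to the case where each $A_i$ is the basepoint, express $(\underline{X},\underline{\ast})^K$ as a colimit over the face poset category $\calK$, and then apply the classical James--Milnor stable splitting of a product fibrewise over $\calK$.

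First I would reduce to the case $A_i = \ast$ for all $i$. For each $i$, because $A_i$ is contractible and pointed, the inclusion of the basepoint $\ast \hookrightarrow A_i$ is a weak equivalence, so one has a weak equivalence of pairs $(X_i, \ast) \to (X_i, A_i)$. Since the polyhedral product construction is a homotopy functor in the pairs (the cofibrancy of the inclusions $A_i \hookrightarrow X_i$ ensures that the defining pushouts compute homotopy colimits), this yields a homotopy equivalence $(\underline{X}, \underline{\ast})^K \simeq (\underline{X}, \underline{A})^K$. Thus it suffices to prove the statement with $A_i = \ast$.

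Next I would identify $(\underline{X}, \underline{\ast})^K$ as a homotopy colimit. With $A_i = \ast$, each $D(\sigma)$ is canonically $\prod_{i\in\sigma} X_i$, and $(\underline{X}, \underline{\ast})^K = \bigcup_{\sigma\in K} D(\sigma) \simeq \hocolim_{\sigma\in \calK} D(\sigma)$, the connecting maps $D(\sigma)\hookrightarrow D(\tau)$ for $\sigma\subseteq \tau$ being the basepoint inclusions on the coordinates in $\tau\setminus\sigma$. Because $\Sigma$ commutes with homotopy colimits in pointed spaces,
\begin{equation*}
\Sigma(\underline{X},\underline{\ast})^K \simeq \hocolim_{\sigma\in\calK} \Sigma\!\prod_{i\in\sigma} X_i.
\end{equation*}
Now I would invoke the classical stable splitting $\Sigma\prod_{i\in\sigma} X_i \simeq \bigvee_{\emptyset\neq J\subseteq\sigma}\Sigma\widehat{X}^J$, whose wedge summands are obtained via the retractions $\Sigma\prod_{i\in\sigma} X_i \to \Sigma \widehat{X}^J$ collapsing the fat wedge.

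The key step, and the main obstacle, is to verify that these splittings are natural with respect to the inclusions $D(\sigma) \hookrightarrow D(\tau)$: namely, that under the splitting, this map becomes the inclusion of wedge summands $\bigvee_{\emptyset\neq J\subseteq\sigma}\Sigma\widehat{X}^J \hookrightarrow \bigvee_{\emptyset\neq J\subseteq\tau}\Sigma\widehat{X}^J$. This boils down to the observation that for $J\subseteq\tau$ the composite $\Sigma D(\sigma) \hookrightarrow \Sigma D(\tau) \to \Sigma\widehat{X}^J$ agrees with the analogous retraction when $J\subseteq\sigma$, and is null-homotopic when $J\not\subseteq\sigma$ because then some coordinate in $D(\sigma)$ lies constantly at the basepoint and hence is killed by the smash product. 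Granted this naturality, one concludes
\begin{equation*}
\Sigma(\underline{X},\underline{\ast})^K \simeq \hocolim_{\sigma\in\calK}\bigvee_{\emptyset\neq J\subseteq \sigma}\Sigma\widehat{X}^J \simeq \bigvee_{\emptyset\neq I\in K}\Sigma\widehat{X}^I \simeq \Sigma\bigvee_{I\in K}\widehat{X}^I,
\end{equation*}
where the middle identification uses that for each non-empty $I\in K$ the subcategory $\{\sigma\in\calK : I\subseteq\sigma\}$ has $I$ as its initial object, so the contribution of the summand $\Sigma\widehat{X}^I$ to the homotopy colimit collapses to a single copy.
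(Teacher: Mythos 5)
Your proof is correct, and the strategy is the same one the paper uses to prove its general splitting theorem (Theorem~\ref{thm:stableSplitting}), of which the statement at hand is a special case: apply the natural Ganea splitting $\Sigma(Y_1\times\cdots\times Y_m)\simeq\Sigma\bigvee_{I\subset[m]}\widehat Y^I$ to each $\Sigma D(\sigma)$, use naturality to commute the wedge past the colimit over $\calK$, and then identify each wedge summand's colimit. The only difference is organizational. Where you analyze the $J$-summand directly — observing that it is supported on the subposet $\{\sigma\in K : J\subseteq\sigma\}$ with identity transition maps, so by a left Kan extension argument (and the fact that this subposet has initial object $J$ when $J\in K$, hence contractible nerve) the colimit collapses to $\Sigma\widehat X^J$ — the paper first packages the $I$-summand as a smash polyhedral product $\widehat{(\underline X,\underline A)}^{K_I}$ (Proposition~\ref{prop:stableSmashProdSplit}, via a similar ``identity in the removed directions'' Kan extension argument) and then evaluates that smash polyhedral product by a wedge lemma for diagrams with constant maps (Lemma~\ref{lem:WedgeLemma}). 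Your route avoids introducing smash polyhedral products but buys essentially the same thing; the paper's formulation has the advantage of giving, in one pass, the general case where $A_i\to X_i$ is merely null. Two small points worth tightening if you write this up formally: make the Kan extension step explicit (it is the crux of ``collapses to a single copy''), and note that your initial reduction to $A_i=\ast$ relies on the basepoint inclusion $\ast\hookrightarrow A_i$ being a cofibration so that the levelwise equivalence of $\calK$-diagrams induces an equivalence of homotopy colimits.
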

 There is also a version of the splitting when all the $X_i$'s are contractible. 
 \begin{theorem}[{\cite[Theorem 2.21]{BBCG}}]
    \label{thm:classicalStableSplit}
    Let $K$ be a simplicial complex on the vertex set~$[m]$ and let $(\underline{X},\underline{A})$ be a family of pairs. If $X_i$ is contractible for each $1\leq i \leq m$, then
    \begin{equation*}
        \Sigma (\underline{X},\underline{A})^K \simeq \Sigma \bigvee_{I \not \in K}\B{K_I} \star \widehat{A}^I.
    \end{equation*}
 \end{theorem}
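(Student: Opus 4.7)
The plan is to present $(\underline{X},\underline{A})^K$ as a homotopy colimit over the face poset category $\calK$, apply the classical stable James splitting of products term-wise, and identify the resulting pieces using the contractibility hypothesis on the $X_i$.

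First, write $(\underline{X},\underline{A})^K \simeq \hocolim_{\sigma\in\calK} D(\sigma)$, with $D(\sigma)=\prod_{i=1}^m Y_i(\sigma)$, where $Y_i(\sigma)=X_i$ if $i\in\sigma$ and $Y_i(\sigma)=A_i$ otherwise. Since suspension preserves homotopy colimits and wedges commute with colimits, applying the James stable splitting $\Sigma\prod_i Y_i\simeq \bigvee_{\emptyset\neq J\subseteq [m]}\Sigma \widehat{Y}^J$ naturally in $\sigma$ and swapping the wedge with the homotopy colimit yields
\begin{equation*}
\Sigma(\underline{X},\underline{A})^K \simeq \bigvee_{\emptyset\neq J\subseteq [m]} \Sigma\,\hocolim_{\sigma\in\calK}\,\widehat{Y(\sigma)}^J.
\end{equation*}
Next, for each fixed $J\subseteq [m]$, analyze the diagram $F_J(\sigma):=\widehat{Y(\sigma)}^J$. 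Because each $X_i$ is contractible, $F_J(\sigma)$ is contractible whenever $\sigma\cap J\neq\emptyset$ and equals $\widehat{A}^J$ exactly when $\sigma$ lies in the sieve $\calK_{[m]\setminus J}$ of faces disjoint from $J$. Thus, up to equivalence in the functor category, $F_J$ is the extension-by-zero of the constant diagram $\widehat{A}^J$ on this sieve.

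The crux is to establish
\begin{equation*}
\hocolim_{\sigma\in\calK} F_J(\sigma) \;\simeq\; \B{K_J}\star \widehat{A}^J,
\end{equation*}
with the right-hand side contractible precisely when $J$ is a face of $K$. My approach is to compute the left-hand side via the bar construction: chains in $\calK$ starting at $\emptyset$ control which $\widehat{A}^J$-cells appear, and the contractibility condition collapses cells for chains that touch $J$, leaving only contributions from chains lying in the cosieve $\calK\setminus\calK_{[m]\setminus J}$ of faces meeting $J$. A standard barycentric-subdivision argument identifies the nerve of this cosieve with $\B{K_J}$, and the extra suspension in the identity $\B{K_J}\star \widehat{A}^J\simeq \Sigma(\B{K_J}\wedge\widehat{A}^J)$ arises because the relevant cells are attached along cones ($\widehat{A}^J\to *$) rather than cylinders. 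When $J\in K$, the subcomplex $K_J$ is the full simplex on $J$, so $\B{K_J}$ is contractible and the corresponding join vanishes; only non-faces $I\notin K$ survive in the wedge, yielding the stated splitting.

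The main obstacle is this bar-construction calculation producing the join structure. The subtlety is that the extension-by-zero diagram $F_J$ is not a left Kan extension in the pointed setting---the actual left Kan extension of $\mathrm{const}_{\widehat{A}^J}$ from the sieve gives $\mathrm{const}_{\widehat{A}^J}$ on all of $\calK$---so naive adjunction arguments computing $\hocolim_\calK F_J$ as $\hocolim_{\calK_{[m]\setminus J}}\widehat{A}^J$ are invalid. One must count the cells coming from chains starting at $\emptyset$ explicitly, distinguishing those that remain $\widehat{A}^J$-valued throughout from those that collapse to basepoint, and track how they glue along sub-chains to recover both the $\B{K_J}$ factor and the suspension absorbing the cone structure.
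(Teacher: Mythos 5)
Your high-level strategy matches the paper's: apply the (James/Ganea) stable splitting of products term-wise, commute the resulting wedge past the colimit, and analyze the per-subset-$J$ diagram $F_J(\sigma)=\widehat{Y(\sigma)}^J$, which is $\widehat{A}^J$ for $\sigma\cap J=\emptyset$ and contractible otherwise (exactly the paper's Proposition \ref{prop:stableSmashProdSplit}). You have also correctly identified where all the work lies: the equivalence $\hocolim_{\calK} F_J\simeq \B{K_J}\star\widehat{A}^J$. However, you do not actually prove it. Your bar-construction sketch is a description of a plan, not an argument, and you concede as much when you call it ``the main obstacle'' and list what ``must'' be tracked. That is the gap. (Your instinct that $F_J$ is not a left Kan extension from the sieve of $J$-disjoint faces, so one cannot just restrict the colimit to that sieve, is correct, but it does not point toward a resolution.)

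The paper closes this gap with a different and cleaner pair of moves, neither of which requires the explicit cell-by-cell bar computation over all of $\calK$. First, because $F_J(\sigma)=\widehat{D}_J(\sigma)$ depends only on $\sigma\cap J$, the functor factors through the retraction $\calK\to\calK_J$, $\sigma\mapsto\sigma\cap J$; concretely, every morphism induced by dropping a vertex $i\notin J$ is an identity, and iterating this observation reduces the colimit to $\colim_{\calK_J}\widehat{D}$. (This retraction is final since it has a right adjoint, the inclusion $\calK_J\hookrightarrow\calK$; it is \emph{not} a restriction to a sieve, which is why your Kan-extension worry does not apply to it.) Second, over $\calK_J$ the diagram takes the value $\widehat{A}^J$ at the initial object $\emptyset$ and $*$ at every nonempty face, and Lemma \ref{lem:initialdiagram} (the initial-diagram lemma, proven by presenting $\calK_J$ as the join $\{\emptyset\}\star(\calK_J)_{<\emptyset}$ and computing the colimit termwise as a pushout) yields $\widehat{A}^J\star\B{K_J}$ directly, with the suspension built into the join. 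Your proposed identification of the nerve of the cosieve of faces meeting $J$ with $\B{K_J}$ is in fact correct and follows from the same poset retraction, but it is an unnecessary detour: once you reduce the indexing category to $\calK_J$, the nerve $\B{K_J}=\B{(\calK_J)_{<\emptyset}}$ appears directly from the initial-diagram lemma, and no separate cosieve argument is needed. To turn your proposal into a proof, replace the unfinished bar-construction step with this finality reduction plus the initial-diagram lemma.
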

 In particular, the two splitting results above are special cases of a theorem that requires the map $A_i \to X_i$ to be null-homotopic for all $i$ \cite[Theorem 2.13]{BBCG}. 

 In \cite{Davis2012} Davis finds a general formula for the Euler characteristic of the polyhedral product. 
		\begin{theorem}[\cite{Davis2012}]
			\label{thm:DavisEuler}
			Let $K$ be a simplicial complex on the vertex set~$[m]$ and $A \subset X$ be two finite CW-complexes. Then
			\begin{equation*}
				\chi((X,A)^K)=\sum_{\sigma \in K} (\chi (X)-\chi(A))^{|\sigma|}\chi(A)^{m-|\sigma|}.
			\end{equation*}
		\end{theorem}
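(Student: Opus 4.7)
The plan is to prove Davis's formula by constructing an explicit locally closed stratification of $(X,A)^K$ indexed by the faces of $K$ and then invoking additivity and multiplicativity of the Euler characteristic.

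First I would produce the stratification. For every point $x=(x_1,\ldots,x_m) \in \prod_{i=1}^m X$, set
\[
\sigma(x) := \{\, i \in [m] : x_i \notin A \,\}.
\]
Unwinding the definition of $D(\tau)$ shows that $x \in D(\tau)$ precisely when $\sigma(x) \subseteq \tau$, so by downward closedness of $K$ the point $x$ lies in $(X,A)^K$ if and only if $\sigma(x) \in K$. This yields a partition
\[
(X,A)^K \;=\; \bigsqcup_{\sigma \in K} E_\sigma, \qquad E_\sigma := \prod_{i \in \sigma}(X \setminus A) \times \prod_{i \notin \sigma} A,
\]
in which each stratum is a product of copies of the closed subcomplex $A$ with copies of its locally closed complement $X \setminus A$.

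Next I would compute $\chi(E_\sigma)$. Since $X$ is a finite CW-complex in which $A$ is a subcomplex, the compactly supported Euler characteristic $\chi_c$ agrees with $\chi$, is additive on closed/open decompositions, and is multiplicative under products. From $X = A \sqcup (X \setminus A)$ we get $\chi(X \setminus A) = \chi(X) - \chi(A)$, and hence $\chi(E_\sigma) = (\chi(X) - \chi(A))^{|\sigma|}\chi(A)^{m-|\sigma|}$. Summing over $\sigma \in K$ using additivity over the stratification then gives the claimed formula.

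The main obstacle is justifying additivity of $\chi$ over the stratification, since each $E_\sigma$ is only locally closed in $\prod_{i=1}^m X$. Passing through $\chi_c$ dissolves this worry, but one can equivalently give a direct cell-counting argument: fix a CW-structure on $X$ making $A$ a subcomplex, take the product CW-structure on $\prod_{i=1}^m X$, and note that a product cell $e_1 \times \cdots \times e_m$ of $(X,A)^K$ lies in the stratum $E_\sigma$ where $\sigma$ records precisely the coordinates at which $e_i$ is not a cell of $A$. Grouping the alternating sum of cell-dimensions according to $\sigma$ then recovers the product expression on the right-hand side.
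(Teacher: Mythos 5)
Your proof is correct, and it reproduces the standard argument; the paper itself does not supply a proof but simply cites Davis, whose original argument proceeds in essentially the same way: decompose $(X,A)^K$ into the locally closed pieces $E_\sigma = \prod_{i\in\sigma}(X\setminus A)\times\prod_{i\notin\sigma}A$ indexed by $\sigma\in K$, then use additivity and multiplicativity of the (compactly supported) Euler characteristic. Your two verifications complement each other nicely: the $\chi_c$ argument is clean and works in much greater generality (e.g., for reasonable constructible sets), while the cell-counting version keeps everything inside the finite CW-category and makes the additivity over a merely locally closed stratification completely elementary. One small stylistic point: when invoking $\chi_c=\chi$ you are implicitly using that $(X,A)^K$ is compact (it is, being a finite union of products of compact CW-complexes), which is worth stating; in the cell-counting version this is automatic since the CW-structure is finite.
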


\begin{example}
\label{ex:EulerCharEx}
    Let $K$ be a simplicial complex and $X$ a finite $CW$-complex.
    \begin{enumerate}
        \item $\chi((D^2,S^1)^K) = 0$
        \item $\chi((D^1,S^0)^K) = \Sigma_{\sigma \in K}(-1)^{|\sigma|}\cdot 2^{m-|\sigma|}$
        \item $\chi((X,*)^K)=\Sigma_{\sigma \in K} (\chi(X)-1)$
    \end{enumerate}
    
\end{example}
 \subsection{Moment-angle complexes}
        \label{sec:MACs}
		A particularly well studied family of polyhedral products are the \emph{moment-angle complexes}. For a simplicial complex $K$, the associated moment-angle complex is defined as the polyhedral product $\zk := (D^2,S^1)^K$. In fact, moment-angle complexes were studied long before polyhedral products were defined. The original construction goes back to Davis and Januskiewicz in \cite[§4.1]{DavisJanus}, but only for a family of moment-angle complexes known as moment-angle manifolds. However, the definition that inspired polyhedral products, i.e.\ moment-angle complexes viewed as a union of products, is due to Buchstaber and Panov \cite[Definition~6.10]{BuchPan3}. Since Buchstaber and Panov's reinterpretation of moment-angle complexes, a lot of progress has been made. We will now present some results that both will be of use later, and emphasise how moment-angle complexes are studied. 

Since $D^2$ is contractible, we can use Theorem \ref{thm:classicalStableSplit} to describe $\Sigma \zk$. There is a homotopy equivalence 
\begin{equation*}
            \Sigma \zk \simeq \bigvee_{I \not \in K} \Sigma^{|I|+2}|K_I|.
        \end{equation*}
This decomposition makes it possible to describe the homology of $\zk$ in terms of full subcomplexes of $K$ for any homology theory. In particular, it describes the singular cohomology groups of $\zk$. The cohomological ring structure can also be described. In~\cite{BuchPan3}, the cohomology of $\zk$ is computed using the Eilenberg--Moore spectral sequence. For a ring $\mathbf{k}$, recall that $\mathbf{k}[K]$ is the Stanley--Reisner ring as defined in Definition \ref{def:StanleyReisnerRing}. 

\begin{theorem}[{\cite[Theorem 7.6]{BuchPan3}}]
        \label{thm:TorIso}
        Let $K$ be a simplicial complex on the vertex set~$[m]$. Let $\mathbf{k}$ be a field or $\Z$. There is an isomorphism of algebras
            \begin{equation*}
            H^{2j-i}(\zk;\mathbf{k}) \cong  \mathrm{Tor}^{-i,2j}_{\mathbf{k}[x_1,\ldots,x_m]}(\mathbf{k},\mathbf{k}[K]).
        \end{equation*}
\end{theorem}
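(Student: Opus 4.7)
The plan is to realize $\zk$ as the total space of a fibration over the torus classifying space and then feed this into the Eilenberg--Moore spectral sequence. First I would introduce the Davis--Januszkiewicz space $DJ(K) := (\C P^\infty, *)^K$, which carries the same Stanley--Reisner combinatorics but with $\C P^\infty$ in place of $D^2$. A direct inspection of the polyhedral product diagram, together with K\"unneth and the fact that $H^*(\C P^\infty;\mathbf{k}) = \mathbf{k}[x]$ (with $|x|=2$), yields a ring isomorphism
\begin{equation*}
    H^*(DJ(K);\mathbf{k}) \cong \mathbf{k}[K],
\end{equation*}
where the vertex coordinates $x_i$ get identified in degree $2$ and the monomial relations are exactly those coming from non-faces.

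Next I would build the homotopy pullback square
\begin{equation*}
    \begin{tikzcd}
        \zk \arrow[r] \arrow[d] & DJ(K) \arrow[d] \\
        \ast \arrow[r] & BT^m
    \end{tikzcd}
\end{equation*}
by exhibiting $DJ(K)$ as the Borel construction $ET^m \times_{T^m} \zk$: the standard $T^m$-action on $(D^2)^m$ restricts to a free action on each $D(\sigma)$, and passing to the quotient of each coordinate separately replaces $(D^2, S^1)$ by $(\C P^\infty, *)$. Since the right-hand vertical map is a fibration with fiber $\zk$, the associated Eilenberg--Moore spectral sequence converges to $H^*(\zk;\mathbf{k})$ with
\begin{equation*}
    E_2^{-i,\,2j} = \mathrm{Tor}^{-i,\,2j}_{H^*(BT^m;\mathbf{k})}\bigl(\mathbf{k},\, H^*(DJ(K);\mathbf{k})\bigr) = \mathrm{Tor}^{-i,\,2j}_{\mathbf{k}[x_1,\ldots,x_m]}(\mathbf{k}, \mathbf{k}[K]).
\end{equation*}
To justify convergence, one uses that $BT^m$ is simply connected and that all groups involved are finitely generated in each total degree.

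The main obstacle is the collapse of this spectral sequence and the extension problem for the algebra structure. For collapse, the key observation is that $\mathbf{k}[K]$ admits a free resolution over $\mathbf{k}[x_1,\ldots,x_m]$ in which the internal grading is always even, so $E_2^{-i,2j}$ is concentrated in bidegrees whose total degree parity forces the differentials $d_r\colon E_r^{-i,2j} \to E_r^{-i+r,2j-r+1}$ to vanish for degree reasons when $r \geq 2$. For the multiplicative structure, one appeals to the fact that the Eilenberg--Moore spectral sequence is multiplicative and that $\mathbf{k}[K]$ is a Koszul-type module whose Tor algebra has no hidden multiplicative extensions over a field or $\mathbb{Z}$; concretely one can use the Koszul resolution $\Lambda[u_1,\ldots,u_m]\otimes \mathbf{k}[x_1,\ldots,x_m]$ of $\mathbf{k}$ to obtain an explicit dg-model whose cohomology realises the stated isomorphism on the nose. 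With these in place, reading off $E_\infty^{-i,2j}$ yields the claimed identification $H^{2j-i}(\zk;\mathbf{k}) \cong \mathrm{Tor}^{-i,2j}_{\mathbf{k}[x_1,\ldots,x_m]}(\mathbf{k},\mathbf{k}[K])$ as bigraded algebras.
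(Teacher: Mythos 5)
The paper does not supply its own proof of this theorem; it is stated as a citation to Buchstaber--Panov, so there is no internal proof to compare against. Taken on its own, your outline follows the classical route: identify $DJ(K)=(\C P^\infty,*)^K$ with the Borel construction $ET^m\times_{T^m}\zk$, form the homotopy pullback over $BT^m$, and feed this into the Eilenberg--Moore spectral sequence. The computation $H^*(DJ(K);\mathbf{k})\cong \mathbf{k}[K]$ and the identification of the $E_2$-page with $\mathrm{Tor}_{\mathbf{k}[x_1,\ldots,x_m]}(\mathbf{k},\mathbf{k}[K])$ are both correct, and the overall architecture is sound. A small inaccuracy: the $T^m$-action on $D(\sigma)$ is not free (the circle action on each $D^2$-factor has a fixed point at the origin); the Borel construction does not need freeness, so the conclusion survives, but the justification as written is wrong.

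The genuine gap is the collapse argument. You write that the differentials $d_r\colon E_r^{-i,2j}\to E_r^{-i+r,2j-r+1}$ vanish for all $r\geq 2$ ``for degree reasons,'' but the parity of the target internal degree $2j-r+1$ is odd only when $r$ is even. For odd $r\geq 3$ the target again has even internal degree, and the source column $-i$ has room to map nontrivially to column $-i+r$ whenever $i\geq r$, so parity alone does not force $d_3,d_5,\ldots$ to vanish. To close this you need an independent input: either compare total dimensions over a field against a direct cellular computation of $H^*(\zk)$, or (as Buchstaber--Panov actually do) construct an explicit cellular cochain model of $\zk$ together with a bigraded dg-algebra quasi-isomorphism to the Koszul complex $\bigl(\Lambda[u_1,\ldots,u_m]\otimes\mathbf{k}[K],\,d u_i = x_i\bigr)$. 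Once that quasi-isomorphism is in hand, the spectral sequence is superfluous and one also gets the ring structure and the case $\mathbf{k}=\Z$ for free; your appeal to the Koszul resolution at the end is gesturing at exactly this, but the actual map and the verification that it is a quasi-isomorphism are the content of the proof, and your sketch does not supply them.
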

   
        The isomorphism of Theorem \ref{thm:TorIso} gives a natural bigrading on the cohomology ring of $\zk$. The wedge decomposition of $\Sigma \zk$ splits the cohomology of $\zk$ into nice subgroups given by the cohomology of full subcomplexes of $K$. A result by Baskakov makes it possible to describe the ring structure with respect to the full subcomplexes~$K_I$. 
\begin{theorem}[{\cite[Theorem 1]{Baskakov02}}, {\cite[Theorem 1]{BaskBuchPan}}]
\label{thm:cupProductMAC}
            Let $K$ be a simplicial complex on $m$ vertices. Let $\mathbf{k}$ be a field or $\Z$.  There is an isomorphism of groups 
            \begin{equation*}
                H^i(\zk;\mathbf{k}) \cong \begin{cases}
                    \mathbf{k} & i = 0, \\
                    \displaystyle \bigoplus_{I \not \in K}  \widetilde{H}^{i-|I|-1}(K_I; \mathbf{k}) & i > 0.
                \end{cases}
            \end{equation*}
            In particular, there is an isomorphism of algebras 
            \begin{equation*}
                H^*(\zk;\mathbf{k}) \cong \mathbf{k}\oplus \bigoplus_{I \not \in K} \widetilde{H}^*(K_I; \mathbf{k}).
            \end{equation*}                 
            The products in the sum on the right are given as follows: for $I, J \not \in K$, with $I \cap J = \emptyset$, let~$\alpha \in \widetilde{H}^{p}(K_I;\mathbf{k})$ and $\beta \in \widetilde{H}^{q}(K_J;\mathbf{k})$ be nontrivial cohomology classes. Then there exists a nontrivial cohomology class $\gamma \in \widetilde{H}^{p+q}(K_{I\cup J};\mathbf{k})$ such that $\alpha \smile \beta = \gamma$. All products of cohomology classes in $H^*(\zk;\mathbf{k})$ arise in this way. 
            
        \end{theorem}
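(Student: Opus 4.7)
The plan is to build an explicit cellular cochain model for $\zk$ as a CW-subcomplex of $(D^2)^m$ and then read off both the additive and multiplicative structure from that model. I give $D^2$ the standard CW structure with three cells: a $0$-cell $*$ on the boundary, a $1$-cell $T = S^1\setminus\{*\}$, and a $2$-cell $D$; the product CW structure on $(D^2)^m$ then has cells indexed by pairs $(I,J)$ of disjoint subsets of $[m]$, where $I$ marks the $D$-coordinates and $J$ the $T$-coordinates. Observe that the cell $(I,J)$ lies in $\zk$ precisely when $I \in K$, so the cellular chain complex of $\zk$ has a completely combinatorial description.

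For the additive isomorphism, I plan to decompose the cellular cochain complex as $C^*(\zk) = \bigoplus_{L\subseteq[m]} C_L$, where $C_L$ collects those cells with $I\sqcup J = L$. The cells of $C_L$ are in bijection with faces of $K_L$ (including the empty face), and the cellular coboundary, which promotes an element of $J$ to $I$ whenever the result is still a face of $K$, should match the reduced simplicial coboundary on $K_L$ up to a degree shift by $|L|+1$. Passing to cohomology then yields $H^i(C_L)\cong\widetilde H^{i-|L|-1}(K_L;\mathbf k)$; summands with $L\in K$ vanish because $K_L$ is a simplex, and the $L=\emptyset$ piece gives the $\mathbf k$ in degree $0$. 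As a sanity check, the same additive decomposition also drops out of the stable splitting of $\Sigma\zk$ recalled just above the theorem.

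For the ring structure, I will restrict the cellular cup product on $(D^2)^m$, which is the $m$-fold tensor product of the cellular cup product on $C^*(D^2)$. The key local computation I need is that, for a suitable cellular diagonal approximation on $D^2$, one has $T^*\smile T^* = \pm D^*$, while $D^*\smile D^*$ and $T^*\smile D^*$ vanish on dimensional grounds. Feeding this into the tensor product, the induced product on summands $C_I\otimes C_J$ with $I\cap J = \emptyset$ should land in $C_{I\cup J}$ and send a pair of faces $\sigma\in K_I$, $\tau\in K_J$ to $\sigma\cup\tau$ if $\sigma\cup\tau\in K$ and to zero otherwise. On cohomology this is precisely the claimed map $\widetilde H^*(K_I)\otimes\widetilde H^*(K_J)\to\widetilde H^*(K_{I\cup J})$ induced by the simplicial join.

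The hard part will be pinning down the cellular diagonal approximation on $D^2$ carefully enough to control the signs, and then verifying that the induced cochain map on the summands $C_L$ really is the expected simplicial join map with the correct degree shift (reflecting the equivalence $|K_I * K_J|\simeq\Sigma(|K_I|\wedge|K_J|)$). Once that combinatorial identification is in place, both parts of the theorem follow at once from the decomposition established in the additive step.
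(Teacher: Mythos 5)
The paper attributes this theorem to Baskakov and to Buchstaber--Panov and does not prove it, so there is no in-paper argument to compare against; your outline reconstructs the standard cellular proof. The additive part of your plan is correct: cells of $\zk \subset (D^2)^m$ are indexed by disjoint pairs $(I,J)$ with $I \in K$, the cochain complex splits by support $L = I \sqcup J$, and $C_L$ is the reduced simplicial cochain complex of $K_L$ shifted up by $|L|+1$.

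The stated ``key local computation'' $T^*\smile T^* = \pm D^*$, however, is not what drives the ring structure, and choosing a diagonal with that property would actually make the argument harder. When the supports are disjoint, $L_1 \cap L_2 = \emptyset$ --- the only case in which the theorem describes a nonzero product --- each coordinate $i$ pairs a non-basepoint cell on one side with the basepoint on the other, so only the trivial identities $D^*\smile *^* = D^*$, $*^*\smile D^* = D^*$, $T^*\smile *^* = T^*$, $*^*\smile T^* = T^*$ ever occur; the $T\otimes T$ component of the diagonal never appears. The place where $T^*\smile T^*$ does matter is the last sentence of the theorem, that \emph{all} products come from disjoint supports, i.e.\ that cup products with $L_1\cap L_2\neq\emptyset$ vanish --- and your outline does not address this case at all. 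For that you want the \emph{opposite} of your formula: take the valid cellular diagonal $\Delta(T) = T\otimes * + *\otimes T$, $\Delta(D) = D\otimes * + *\otimes D$, for which $T^*\smile T^* = 0$. With this choice no $\Delta_i$ produces a term of the shape (non-basepoint)$\,\otimes\,$(non-basepoint), so the cup product on $C^*(\zk)$ vanishes identically at the cochain level whenever the supports overlap, and on disjoint supports it is literally the simplicial join map $\sigma^*\otimes\tau^*\mapsto(\sigma\cup\tau)^*$ landing in $C_{L_1\cup L_2}$ (and zero when $\sigma\cup\tau\notin K$). Your choice $\Delta(D) = D\otimes * + *\otimes D \pm T\otimes T$ is also a legitimate diagonal approximation --- all such choices are chain homotopic --- but it introduces spurious cochain-level products for overlapping supports that you would then have to argue are coboundaries. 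So replace the local formula by $T^*\smile T^*=0$, add the vanishing argument for overlapping supports, and the rest of your plan goes through (the Koszul signs you flag are the remaining bookkeeping, and they do need care).
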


 It is also possible to describe Massey products \cite{GrbicLinton} and Steenrod operations \cite{agarwal2024steenrod} in terms of full subcomplexes of $K$.   
		
		Moment-angle complexes also share a deep connection to toric geometry, since the complex points of any smooth projective toric variety can be realized as the orbits of a torus acting on the polyhedral product $(\C, \C^\times)^K$. Note that we have a homotopy equivalence $\zk \simeq (\C, \C^\times)^K$. This is a topological version of something known as Cox construction \cite[Theorem 2.1]{CoxHomogCoord} of smooth projective algebraic varieties in algebraic geometry. See \cite[§5.4]{BuchPan} for a description of the Cox construction from a toric topological viewpoint.

  There is also related polyhedral product called the \emph{real moment-angle complexes} defined as $\R Z_K:=(D^1,S^0)^K$. The real moment-angle complex can be seen as the fixed points of a $C_2$-action on $\zk$. Theorem \ref{thm:classicalStableSplit} also makes it possible to describe $\R\zk$,
         \begin{equation*}
            \Sigma \R Z_K \simeq \bigvee_{I \not \in K} \Sigma^2 |K_I|.
        \end{equation*}
  This makes it straightforward to describe the cohomology groups of $\R\zk$. The ring structure has been computed  \cite{CaiProducts}, but is far more complicated than the case of the moment-angle complex. 
		\section{$\infty$-categorical setup}
  \label{sec:inftysetup}
		We will now consider an $\infty$-categorical version of polyhedral products. From now on we freely use the language of $\infty$-categories as developed by Lurie in \cite{LurieHTT}. Let $\calC$ be a cartesian closed $\infty$-category with finite colimits. That is, for each~$X \in {\calC}$ the product functor $X\times {-}$ has a right adjoint, and thus preserves all colimits. Denote the terminal object of $\calC$ by~$*$. A pointed object in $\calC$ is an object $X$ together with a map~$x\colon *\to X$. From now on, when we talk about colimits, we mean in the~$\infty$-categorical sense. Whenever $\calC$ is the nerve of a model category,  computing the homotopy colimit in the underlying model category suffices. 
        \begin{definition}
            The \emph{suspension} of an object $X \in \calC$ is the pushout 
            \begin{equation*}
                \begin{tikzcd}
        X \arrow[dr, phantom, "\scalebox{1}{$\ulcorner$}" , very near end, color=black]\ar[r]\ar[d] & * \ar[d] \\
        * \ar[r] & \Sigma X  .
    \end{tikzcd}
            \end{equation*}
        \end{definition}
         \begin{definition}
         \label{def:wedge}
            The \emph{wedge} of two pointed objects  $X,Y \in \calC$ is the pushout 
            \begin{equation*}
                \begin{tikzcd}
        * \arrow[dr, phantom, "\scalebox{1}{$\ulcorner$}" , very near end, color=black]\ar[r]\ar[d] & X \ar[d] \\
        Y \ar[r] & X\vee Y  .
    \end{tikzcd}
            \end{equation*}
        \end{definition}
   \begin{definition}
            The \emph{smash product} of two pointed objects  $X,Y \in \calC$ is the pushout 
            \begin{equation*}
                \begin{tikzcd}
        X\vee Y \arrow[dr, phantom, "\scalebox{1}{$\ulcorner$}" , very near end, color=black]\ar[r]\ar[d] & X\times Y \ar[d] \\
        * \ar[r] & X\wedge Y  .
    \end{tikzcd}
            \end{equation*}
        \end{definition}
        \begin{definition}
            The \emph{join} of two pointed objects  $X,Y \in \calC$ is the pushout 
            \begin{equation*}
                \begin{tikzcd}
        X \times Y \arrow[dr, phantom, "\scalebox{1}{$\ulcorner$}" , very near end, color=black]\ar[r]\ar[d] & X  \ar[d] \\
        Y \ar[r] & X\star Y  .
    \end{tikzcd}
            \end{equation*}
        \end{definition}
        
        \begin{lemma}[{\cite[Lemma 3.5]{lavenir2023hiltonmilnors}}]
            Let $X,Y \in C$ be pointed. There is an equivalence 
            \begin{equation*}
                X\star Y \simeq \Sigma X \wedge Y.
            \end{equation*}
        \end{lemma}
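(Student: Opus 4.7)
My plan is to first show $X \star Y \simeq \Sigma(X \wedge Y)$ by a cube argument, and then invoke that $({-}) \wedge Y$ preserves colimits to deduce $\Sigma(X \wedge Y) \simeq \Sigma X \wedge Y$.

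I would consider the commutative cube in $\calC$ whose top face is the span $X \xleftarrow{\pi_1 \circ \iota} X \vee Y \xrightarrow{\pi_2 \circ \iota} Y$, with $\iota \colon X \vee Y \to X \times Y$ the canonical inclusion, and whose bottom face is the span $X \xleftarrow{\pi_1} X \times Y \xrightarrow{\pi_2} Y$ defining the join; the vertical arrows are the identities on $X$ and $Y$ together with $\iota$. The bottom face has pushout $X \star Y$ by definition. A key preliminary claim, which I expect to be the main obstacle, is that the pushout $P$ of the top face is equivalent to the terminal object $*$. To verify this, I would compute $\mathrm{Map}(P, Q)$ for arbitrary $Q \in \calC$: by the pushout property and the universal property of the wedge, this is the homotopy pullback of $\mathrm{Map}(X, Q) \to \mathrm{Map}(X, Q) \times \mathrm{Map}(Y, Q) \leftarrow \mathrm{Map}(Y, Q)$ along the maps $f \mapsto (f, *)$ and $g \mapsto (*, g)$. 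A point amounts to a pair of based paths — one in $\mathrm{Map}(X, Q)$ ending at the constant map, one in $\mathrm{Map}(Y, Q)$ starting at the constant map — each of which traces out a contractible based path space, so $\mathrm{Map}(P, Q)$ is contractible and $P \simeq *$.

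The second step is the cube lemma: in an $\infty$-category with pushouts, if both the top and bottom faces of a commutative cube are pushouts, then the square formed by the cofibers of the four vertical arrows is again a pushout. This is a Fubini-style consequence of the commutativity of colimits with colimits. Applied to our cube, the cofibers are $\mathrm{cofib}(\iota) = X \wedge Y$ (by definition of the smash product), $\mathrm{cofib}(\mathrm{id}_X) = *$, $\mathrm{cofib}(\mathrm{id}_Y) = *$, and $\mathrm{cofib}(* \to X \star Y) = X \star Y$, yielding the pushout square
\begin{equation*}
\begin{tikzcd}
X \wedge Y \ar[r] \ar[d] & * \ar[d] \\
* \ar[r] & X \star Y,
\end{tikzcd}
\end{equation*}
which exhibits $X \star Y \simeq \Sigma(X \wedge Y)$.

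Finally, since $\calC$ is cartesian closed, the functor $({-}) \times Y$ preserves all small colimits (being left adjoint to the internal hom), and so does $({-}) \wedge Y$, being built from products and wedges by pushout. Applying $({-}) \wedge Y$ to the pushout $* \leftarrow X \to *$ defining $\Sigma X$ and using $* \wedge Y \simeq *$ yields $\Sigma X \wedge Y \simeq \Sigma(X \wedge Y)$, completing the proof.
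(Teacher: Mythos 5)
The paper imports this lemma from Lavenir without reproducing a proof, so there is no in-text argument to compare against. Your cube strategy is the natural $\infty$-categorical route and is correct in outline; the identification $X\star Y \simeq \Sigma(X\wedge Y)$ via the cofiber square of the vertical arrows of the cube, combined with a colimit-commutation argument for $\Sigma(X\wedge Y)\simeq \Sigma X\wedge Y$, does prove the lemma. Two places need to be made precise.

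First, in your verification that the top-face pushout $P$ is terminal you identify $\mathrm{Map}(X\vee Y,Q)$ with $\mathrm{Map}(X,Q)\times\mathrm{Map}(Y,Q)$ and refer to ``the constant map $*$,'' which only makes sense for pointed mapping spaces. The paper's $\vee$, $\wedge$, $\star$ are pushouts taken in $\calC$, where $\mathrm{Map}_\calC(X\vee Y,Q)$ is a fiber product over $\mathrm{Map}_\calC(*,Q)$ and $Q$ carries no basepoint. The claim $P\simeq *$ is nonetheless correct: either note that the span $X\leftarrow X\vee Y\to Y$ is a connected diagram of pointed objects, so its pushout in $\calC$ agrees with the pushout in $\calC_*$ where your argument applies verbatim; or check it directly via the $3\times3$ grid with rows $(X,*,*)$, $(X,*,Y)$, $(*,*,Y)$, whose row pushouts give the span computing $P$ while its column pushouts are all $*$. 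Second, the justification ``$(-)\wedge Y$ preserves all small colimits, being built from products and wedges by pushout'' is not right as stated: $X\mapsto X\vee Y$ and the constant functor at $*$ do \emph{not} preserve arbitrary colimits (for instance neither preserves the initial object), so one cannot conclude colimit preservation merely from the functor being built by a pushout of colimit-preserving pieces. What is true, and what you actually need, is that these two functors preserve \emph{connected} colimits (hence pushouts), that $(-)\times Y$ preserves all colimits by cartesian closedness, and then that the pointwise pushout of pushout-preserving functors preserves pushouts, by commutation of colimits with colimits. With those adjustments made explicit, your argument is complete.
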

        \begin{definition}
            Let $\calC$ be an $\infty$-category and let  \begin{equation*}
     (\underline{X},\underline{A})=((X_1,A_1) ,\ldots , (X_m,A_m))
 \end{equation*}  be a sequence of pairs of pointed objects in $\calC$ equipped with a map $\iota_i\colon A_i \to X_i$. We call $(\underline{X},\underline{A})$ a \emph{family of pairs}.
        \end{definition}
		\begin{definition}
  \label{def:oopolyhedralprod}
			Let $(\underline{X},\underline{A})$ be a family of pairs and let $K$ be a simplicial complex on the vertex set $[m]$.  Let $\calK$ be the face poset category of $K$ ordered by inclusions, that is~$\sigma > \tau$ if $\sigma \subsetneq \tau$. We define the polyhedral product $(\underline{X},\underline{A})^K$ as
			\begin{equation*}
				(\underline{X},\underline{A})^K := \colim_{\sigma \in K} D(\sigma),
			\end{equation*}
			with $D(\sigma)$ defined as follows:
			\begin{equation*}
				D(\sigma) = \prod_{i=1}^m Y_i \quad \text{where} \quad Y_i=\begin{cases}X_i & \text{if } i \in \sigma, \\ A_i & \text{if } i \not\in \sigma.\end{cases}
			\end{equation*}
			For any pair of simplices $\sigma \subset \tau \in K$ the map from $D(\sigma)$ to $D(\tau)$ is induced by the products of the maps~$\iota_i$ and the identity. In other words, $(\underline{X},\underline{A})^K$ is the colimit of the diagram 
   \begin{equation*}
       D \colon \calK \to \calC.
   \end{equation*}
		\end{definition}
		\begin{example}
			\begin{enumerate}
				\item Suppose that each $A_i$ is the terminal object $*$. If $K$ is the disjoint union of $m$ points then $(\underline{X},\underline{*})^K$ is $X_1 \vee X_2 \vee \ldots \vee X_m$.
				\item Suppose that $K$ is the $(m-1)$-simplex then $(\underline{X},\underline{A})^K$ is $X_1 \times X_2 \times \ldots \times X_m$.
				\item Suppose that $K$ is the complex of two disjoint vertices and that each $X_i \simeq *$ then $(\underline{X},\underline{A})^K \simeq  \Sigma A_1 \wedge A_2 \simeq A_1 \star A_2 $, the join of $A_1$ and $A_2$.
			\end{enumerate}
		\end{example}
   The following proposition is an $\infty$-version of Proposition \ref{prop:JoinProductClassical} does.
		\begin{proposition}
            \label{prop:JoinProductClassical}
		     Suppose $K$ and $K'$ are two simplicial complexes, and denote their join by $K \star K'$, then $(\underline{X},\underline{A})^K \times (\underline{X},\underline{A})^{K'} = (\underline{X},\underline{A})^{K \star K'}$.
		\end{proposition}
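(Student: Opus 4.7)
The plan is to identify the face poset category of the join $K\star K'$ with the product $\calK\times\calK'$, and then exploit the cartesian closedness of $\calC$ to move a product outside of a colimit.

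First, suppose $K$ has vertex set $[m]$ and $K'$ has vertex set $[m']$; we regard $K\star K'$ as a simplicial complex on the disjoint union $[m]\sqcup[m']$, whose simplices are precisely the disjoint unions $\sigma\sqcup\sigma'$ with $\sigma\in K$ and $\sigma'\in K'$. Because both $K$ and $K'$ contain the empty face, every face of $K$ appears as $\sigma\sqcup\emptyset$, and similarly for $K'$. The face relation in $K\star K'$ respects the splitting of the vertex set, so $(\sigma,\sigma')\mapsto\sigma\sqcup\sigma'$ defines an isomorphism of posets $\calK\times\calK'\xrightarrow{\ \sim\ }\mathcal{K\star K'}$. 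Under this identification, the diagram $D\colon \mathcal{K\star K'}\to\calC$ defining $(\underline{X},\underline{A})^{K\star K'}$ factors as an external product: for each $(\sigma,\sigma')$,
\begin{equation*}
D(\sigma\sqcup\sigma') \;=\; \prod_{i\in[m]\sqcup[m']} Y_i \;\cong\; D_K(\sigma)\times D_{K'}(\sigma'),
\end{equation*}
since the condition $i\in\sigma\sqcup\sigma'$ restricts independently on each half of the vertex set. The transition maps induced by inclusions of simplices split accordingly as products of the two transition maps.

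Second, because $\calC$ is cartesian closed, the functor $X\times(-)$ preserves colimits for every $X$. Applying this twice to the bifunctor $D_K(-)\times D_{K'}(-)$ yields the Fubini-type identification
\begin{equation*}
\colim_{(\sigma,\sigma')\in\calK\times\calK'} D_K(\sigma)\times D_{K'}(\sigma') \;\simeq\; \Bigl(\colim_{\sigma\in\calK} D_K(\sigma)\Bigr)\times\Bigl(\colim_{\sigma'\in\calK'} D_{K'}(\sigma')\Bigr),
\end{equation*}
whose right-hand side is $(\underline{X},\underline{A})^K \times (\underline{X},\underline{A})^{K'}$. Concretely, one first computes the colimit in $\sigma'$ with $\sigma$ fixed (using that $D_K(\sigma)\times(-)$ preserves colimits), and then the colimit in $\sigma$ (using that $(-)\times(\underline{X},\underline{A})^{K'}$ preserves colimits). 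The only nontrivial step is this interchange of a colimit over a product $\infty$-category with an external product; the remaining combinatorics is formal. In $\infty$-categorical language this is exactly the statement that an external product of colimit diagrams is computed as the product of their colimits in a cartesian closed presentable (or merely colimit-admitting and cartesian closed) $\infty$-category, which is where the hypothesis on $\calC$ is used.
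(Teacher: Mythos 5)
Your proof is correct and follows essentially the same route as the paper's: both arguments rest on identifying the face poset of $K\star K'$ with $\calK\times\calK'$ and then using cartesian closedness to distribute the product over the colimit (a Fubini-type interchange). You state the poset identification and the external-product factorization of the diagram $D$ explicitly, which the paper's proof leaves implicit in the last step, but the mathematical content is the same.
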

  \begin{proof}
      Since $\calC$ is cartesian closed, cartesian products preserve colimits. Thus there is there is a chain of equivalences 
      \begin{equation*}
          (\underline{X},\underline{A})^K \times (\underline{X},\underline{A})^{K'} \simeq  \colim_{\sigma' \in K'} \left((\underline{X},\underline{A})^K  \times D(\sigma')\right) \simeq \colim_{\sigma' \in K'} \left( \colim_{\sigma \in K} \left( D(\sigma) \times D(\sigma')  \right)\right).
      \end{equation*}
      The iterated colimits can be rewritten as one colimit iterating over $\sigma \in K$ and $\sigma' \in K'$. This yields the equivalences 
      \begin{equation*}
           \colim_{\sigma' \in K'} \left( \colim_{\sigma \in K} \left( D(\sigma) \times D(\sigma')  \right)\right) \simeq \colim_{\sigma \in K, \sigma' \in K'} \left(D(\sigma) \times D(\sigma') \right)\simeq (\underline{X},\underline{A})^{K * K'}. \qedhere
      \end{equation*}
  \end{proof}
		We will also need a space called the polyhedral smash product. It was first defined for topological spaces in \cite{BBCG}. 
		\begin{definition}
			\label{def:smashpolyhedral}
			Let $(\underline{X},\underline{A})$ be a family of pairs and let $K$ be a simplicial complex on the vertex set $[m]$.  Let $\calK$ be the face poset category of $K$ ordered by inclusions. We define the \emph{polyhedral smash product} $\widehat{(\underline{X},\underline{A})}^K$ as
			\begin{equation*}
				\widehat{(\underline{X},\underline{A})}^K := \colim_{\sigma \in K} \widehat{D}(\sigma),
			\end{equation*}
			with $\widehat{D}(\sigma)$ defined as follows:
			\begin{equation*}
				\widehat{D}(\sigma) = \bigwedge_{i=1}^m Y_i \quad \text{where} \quad Y_i=\begin{cases}X_i & \text{if } i \in \sigma, \\ A_i & \text{if } i \not\in \sigma.\end{cases}
			\end{equation*}
			For any pair of simplices $\sigma \subset \tau \in K$ the map from $\widehat{D}(\sigma)$ to $\widehat{D}(\tau)$ is induced by the maps~$\iota_i$ and the identity. 
		\end{definition}

		\begin{definition}
			Let $(\underline{X},\underline{A})$ be a family of pairs, $K$ a simplicial complex, and $I \subset [m]$. Define 
			\begin{equation*}
				(\underline{X}_I,\underline{A}_I)= ((X_{i_j},A_{i_j}))_{j=1}^{|I|}
			\end{equation*}
			as the subfamily of $(\underline{X},\underline{A})$ determined by $I$.
            We define $(\underline{X},\underline{A})^{K_I} := (\underline{X}_I,\underline{A}_I)^{K_I}$ and similarly for the polyhedral smash product $ \widehat{(\underline{X},\underline{A})}^{K_I}$. 
		\end{definition}
    We will now see how pushouts of simplicial complexes induce pushouts of polyhedral products. Let $K$ be a simplicial complex, and suppose there exists subcomplexes $K_1, K_2$, and $L$ such that $K = K_1 \cup_L K_2$. To be able to relate the various polyhedral products, it is important that $K, K_1, K_2$, and $L$ are all on the same vertex set. If $K$ is a simplicial complex on the vertex set $[m]$ we write $\overline{K_1}, \overline{K_2}, \overline{L}$ for the simplicial complexes $K_1, K_2$, and $L$ regarded as simplicial complexes on the vertex set $[m]$. 
    \begin{proposition}
        \label{prop:PushoutPolyProd}
        Let $K$ be a simplical complex on the vertex set $[m]$ with subcomplexes~$K_1, K_2$, and $L$ such that $K = K_1 \cup_L K_2$. Let $(\underline{X},\underline{A})$ be a family of $m$ pairs. Then there is a pushout of polyhedral products
        \begin{equation*}
                \begin{tikzcd}
        (\underline{X},\underline{A})^{\overline{L}} \arrow[dr, phantom, "\scalebox{1}{$\ulcorner$}" , very near end, color=black]\ar[r]\ar[d] & (\underline{X},\underline{A})^{\overline{K_1}}   \ar[d] \\
        (\underline{X},\underline{A})^{\overline{K_2}}  \ar[r] & (\underline{X},\underline{A})^{K}.  
    \end{tikzcd}
            \end{equation*}
    \end{proposition}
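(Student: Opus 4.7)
The plan is to reduce the pushout-of-polyhedral-products statement to a pushout-of-diagrams statement using the definition of $(\underline{X},\underline{A})^K$ as a colimit over the face poset category $\calK$. The crucial observation is that, because $K, K_1, K_2, L$ all share the same vertex set $[m]$, the functor $D\colon \calK \to \calC$ of Definition \ref{def:oopolyhedralprod} restricts to the analogous functors $D_1, D_2, D_L$ defining $(\underline{X},\underline{A})^{\overline{K_1}}$, $(\underline{X},\underline{A})^{\overline{K_2}}$, and $(\underline{X},\underline{A})^{\overline{L}}$, respectively; this is where the condition that we ``overline'' the subcomplexes is essential.

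First I would verify that the face poset category satisfies
\begin{equation*}
	\calK \simeq \calK_1 \cup_{\calL} \calK_2
\end{equation*}
as a pushout in posets (hence, via the nerve, in $\infty$-categories). This is essentially combinatorial: a face of $K$ lies in $K_1$ or $K_2$ since $K = K_1 \cup K_2$, and a face lies in both if and only if it lies in $L = K_1 \cap K_2$; the order relations are induced by inclusion and glue in the evident way. Thus the universal property of the pushout of posets is satisfied, with maps out corresponding to compatible order-preserving maps on each piece.

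Next I would invoke the fact that colimits commute with colimits in any $\infty$-category with small colimits. Concretely, applying the colimit $\colim_{\calK} D$ to the diagram of face poset categories above yields
\begin{equation*}
	\colim_{\calK} D \;\simeq\; \colim_{\calK_1} D_1 \;\coprod_{\colim_{\calL} D_L}\; \colim_{\calK_2} D_2,
\end{equation*}
which by Definition \ref{def:oopolyhedralprod} is exactly the pushout
\begin{equation*}
	(\underline{X},\underline{A})^{\overline{K_1}} \coprod_{(\underline{X},\underline{A})^{\overline{L}}} (\underline{X},\underline{A})^{\overline{K_2}}.
\end{equation*}
The maps in the pushout square arise from the inclusions of face poset categories and the naturality of colimits.

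I expect the main subtlety to be verifying the pushout of face poset categories at the $\infty$-categorical level rather than just at the level of ordinary posets. In practice this is automatic, since the nerve of a pushout of posets (along inclusions of full sub-posets, as we have here) coincides with the $\infty$-categorical pushout of their nerves; one could justify this either by citing a standard cofibrancy result for diagrams indexed by posets, or by checking the universal property directly. Once this is in hand, the rest is a formal consequence of the colimit-commutes-with-colimit principle afforded by $\calC$ being cartesian closed with small colimits.
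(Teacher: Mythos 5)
Your proof is essentially correct and follows the same conceptual route as the paper's: both reduce to the observation that the colimit over $\calK$ decomposes as the pushout of the colimits over $\calK_1, \calK_2, \calL$. The presentations differ slightly. The paper forms the ``diagram of diagrams'' $D_1 \leftarrow D_L \rightarrow D_2$, observes that $D$ is recovered from it by a left Kan extension, and invokes \cite[Proposition 4.4.2.2]{LurieHTT} to compute the colimit termwise; it never explicitly exhibits $\calK$ as an $\infty$-categorical pushout. You instead establish that $\calK \simeq \calK_1 \cup_{\calL} \calK_2$ as $\infty$-categories and then apply the commutation of colimits. Both ultimately rest on the same decomposition-of-colimits machinery from Lurie, so the approaches are more two phrasings of one argument than genuinely different proofs.

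One minor imprecision worth flagging: your parenthetical claim that the nerve preserves pushouts of posets ``along inclusions of full sub-posets'' is not quite right as a blanket statement; full-subposet inclusions are insufficient in general. What actually makes it work here is that $\calL$ is a \emph{sieve} in $\calK_1$ and $\calK_2$: since $L$ is a simplicial subcomplex, any subface of a face of $L$ again lies in $L$, so $\calL$ is downward-closed with respect to the morphisms $\tau \to \sigma$ (for $\tau \subseteq \sigma$). Sieve inclusions are well-behaved for pushouts of ($\infty$-)categories (they are Dwyer-type cofibrations), and it is this structure, rather than mere full faithfulness, that makes $N(\calK_1) \cup_{N(\calL)} N(\calK_2) \to N(\calK)$ a Joyal equivalence. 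If you include this remark, your argument is fully rigorous and gives a clean alternative to the paper's Kan-extension phrasing.
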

    \begin{proof}
        Denote the face categories of $K_1, K_2$, and $L$ by $\calK_1, \calK_2$, and $\mathcal{L}$. Let $D_1, D_2$ and~$D_L$ be the diagram $D$ restricted to $\calK_1, \calK_2$, and $\mathcal{L}$ respectively. Hence, the colimits of $D_1, D_2$, and~$D_L$ are $(\underline{X},\underline{A})^{\overline{K_1}}, (\underline{X},\underline{A})^{\overline{K_2}}$, and $(\underline{X},\underline{A})^{\overline{L}}$ respectively. Let  $D'$ be the following diagram of diagrams
        \begin{equation*}
            D_1 \leftarrow D_L \rightarrow D_2.
        \end{equation*}
        This diagram is a left Kan extension the diagram $D$, and hence has the same colimit as~$D$, which is $(\underline{X},\underline{A})^{K}$. By \cite[Proposition 4.4.2.2]{LurieHTT}, we may compute the colimits termwise in the diagram, which yields the desired pushout square.
    \end{proof}
    Let $K,L$ be simplicial complexes. We denote the disjoint union by $K\sqcup L$.  
            \begin{corollary}
                Let $K_1,K_2$ be simplicial complexes on the vertex sets $[m]$ and $[n]$. Let~$(\underline{X},\underline{*})$ be a family of $m+n$ pairs.
                \begin{equation*}
                    (\underline{X},\underline{*})^{K_1 \sqcup K_2} \simeq (\underline{X}_{\{1,\ldots,m\}},\underline{*})^{K_1} \vee (\underline{X}_{\{m+1,\ldots,m+n\}},\underline{*})^{K_2}.
                \end{equation*}
            \end{corollary}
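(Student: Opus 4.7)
The plan is to realize $K_1 \sqcup K_2$ as a pushout of simplicial complexes on the common vertex set $[m+n]$, apply Proposition \ref{prop:PushoutPolyProd}, and then identify each corner of the resulting pushout square with the terms appearing in the wedge.

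First I would note that on the vertex set $[m+n]$, the disjoint union decomposes as
\begin{equation*}
    K_1 \sqcup K_2 \;=\; \overline{K_1} \cup_{\overline{L}} \overline{K_2},
\end{equation*}
where $L = \{\emptyset\}$ is the simplicial complex consisting only of the empty face. Indeed, faces of $K_1 \sqcup K_2$ are either faces of $K_1$ or faces of $K_2$, and the only face shared by $\overline{K_1}$ and $\overline{K_2}$ is the empty face. Applying Proposition \ref{prop:PushoutPolyProd} to this decomposition produces a pushout square
\begin{equation*}
    \begin{tikzcd}
        (\underline{X},\underline{*})^{\overline{L}} \arrow[dr, phantom, "\scalebox{1}{$\ulcorner$}" , very near end, color=black]\ar[r]\ar[d] & (\underline{X},\underline{*})^{\overline{K_1}}   \ar[d] \\
        (\underline{X},\underline{*})^{\overline{K_2}}  \ar[r] & (\underline{X},\underline{*})^{K_1 \sqcup K_2}.
    \end{tikzcd}
\end{equation*}

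Next I would identify the three non-terminal corners. For the top-left, since $L = \{\emptyset\}$ the face poset category has only the initial object and $D(\emptyset) = \prod_{i=1}^{m+n} * \simeq *$, so $(\underline{X},\underline{*})^{\overline{L}} \simeq *$. For the top-right, a face $\sigma$ of $\overline{K_1}$ is a subset of $[m]$, so
\begin{equation*}
    D(\sigma) \;=\; \prod_{i \in \sigma} X_i \times \prod_{i \in [m+n]\setminus \sigma} * \;\simeq\; \prod_{i \in \sigma} X_i,
\end{equation*}
because $*$ is terminal and cartesian product with $*$ is the identity. Hence the diagram computing $(\underline{X},\underline{*})^{\overline{K_1}}$ is equivalent to the diagram computing $(\underline{X}_{\{1,\ldots,m\}},\underline{*})^{K_1}$, and the same argument handles the bottom-left corner.

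Finally, substituting these identifications into the pushout square recovers exactly the defining pushout of the wedge (Definition \ref{def:wedge}):
\begin{equation*}
    \begin{tikzcd}
        * \arrow[dr, phantom, "\scalebox{1}{$\ulcorner$}" , very near end, color=black]\ar[r]\ar[d] & (\underline{X}_{\{1,\ldots,m\}},\underline{*})^{K_1}   \ar[d] \\
        (\underline{X}_{\{m+1,\ldots,m+n\}},\underline{*})^{K_2}  \ar[r] & (\underline{X},\underline{*})^{K_1 \sqcup K_2},
    \end{tikzcd}
\end{equation*}
which yields the desired equivalence. The only subtlety, and the step that merits explicit justification, is the identification of $(\underline{X},\underline{*})^{\overline{K_1}}$ with $(\underline{X}_{\{1,\ldots,m\}},\underline{*})^{K_1}$; this relies crucially on the fact that the unused basepoints are terminal, so that padding the vertex set does not alter the colimit up to equivalence.
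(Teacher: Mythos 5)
Your proof is correct and is exactly the argument the paper intends: the corollary appears immediately after Proposition \ref{prop:PushoutPolyProd} with no written proof, and the intended route is precisely to write $K_1 \sqcup K_2 = \overline{K_1} \cup_{\overline{L}} \overline{K_2}$ with $\overline{L}=\{\emptyset\}$ and identify the resulting pushout square with the defining pushout of the wedge from Definition \ref{def:wedge}. Your identification of $(\underline{X},\underline{*})^{\overline{K_1}}$ with $(\underline{X}_{\{1,\ldots,m\}},\underline{*})^{K_1}$ via terminality of $*$ is the one step that genuinely needed spelling out, and you handled it correctly.
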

		\begin{remark}
		    A version of Proposition \ref{prop:PushoutPolyProd} in the $\infty$-category of spaces was used by Grbi\'c and Theriault in \cite{GrbicTheriault07} to determine that moment-angle complexes have the homotopy type of a wedge of spheres when $K$ is a shifted simplicial complex.
		\end{remark}
		
		\section{Stable splitting of polyhedral products}
  \label{sec:stableSplit}
		In this section we will prove some stable splitting results for polyhedral products. The results are $\infty$-categorical generalizations of work by Bahri, Bendersky, Cohen, and Gitler in \cite{BBCG}. For the rest of this section, unless stated otherwise, fix $\calC$ to be a cartesian closed~$\infty$-category. 
  \begin{definition}
  \label{def:posetRealization}
      For a poset category $\calD$, let $\B{\calD}$ be the realization of $\calD$ in $\calC$. That is, the colimit over the $\calD$-shaped diagram, with constant value $* \in \calC$.
  \end{definition}
\begin{definition}
			Let $\calD$ be a poset category. For an object $a \in \calD$ let $\calD_{\leq a}$ be the undercategory of $a$. Let $\calD_{<a}$ be the category of objects in $\calD)$ that are strictly smaller than $a$.
    \end{definition}
  \begin{definition}
  \label{def:simpcplxRealization}
      When $K$ is a simplicial complex and $\calK$ is its face category, we will write $\B{K}  := \B{\calK_{<\emptyset}}$. 
  \end{definition}
  \begin{remark}
      When $\calC$ is the $\infty$-category of spaces, the realization $\B{K}$ of Definition \ref{def:simpcplxRealization} and the geometric realization of $K$ as a topological space from Definition \ref{def:RealizationSpacesK} agrees. 
  \end{remark}
   The main result of this section is the following theorem. 
		\begin{theorem}
			\label{thm:stableSplitting}
			Let $K$ be a simplicial complex with $m$ vertices and let $(\underline{X},\underline{A})$ have the property that each map $\iota_i \colon A_i \to X_i$ is null. Then there is an equivalence 
			\begin{equation*}
				\Sigma (\underline{X},\underline{A})^K \simeq \Sigma \left( \bigvee_{I\subset [m]}  \left( \bigvee_{\sigma \in K_I} \B{(\calK_I)_{<\sigma}}\star\widehat{D}(\sigma)\right) \right). 
			\end{equation*}
		\end{theorem}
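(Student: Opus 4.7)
The strategy is to adapt the proof of \cite[Theorem 2.13]{BBCG} to the $\infty$-categorical setting, breaking the argument into three parts: a natural stable splitting of a product, a cofinality identification, and a wedge decomposition of the smash polyhedral product that uses the null hypothesis.

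First, since $\Sigma$ is a left adjoint on pointed objects of $\calC$ it commutes with colimits, so $\Sigma (\underline{X},\underline{A})^K \simeq \colim_{\sigma \in \calK} \Sigma D(\sigma)$. In any cartesian closed $\infty$-category there is a natural stable splitting
\begin{equation*}
\Sigma (Y_1 \times \cdots \times Y_m) \simeq \Sigma \bigvee_{\emptyset \neq J \subseteq [m]} \bigwedge_{j \in J} Y_j,
\end{equation*}
obtained by iterating the $m = 2$ case, which itself follows from the pushout square defining the smash product together with the pointed sections $X \vee Y \to X \times Y$. Applying this splitting to each $D(\sigma)$ and commuting the wedge past the colimit via naturality yields
\begin{equation*}
\Sigma (\underline{X},\underline{A})^K \simeq \Sigma \bigvee_{\emptyset \neq I \subseteq [m]} \colim_{\sigma \in \calK} \widehat{D}_I(\sigma),
\end{equation*}
where $\widehat{D}_I(\sigma) = \bigwedge_{i \in I} Y_i$ with $Y_i$ as in Definition \ref{def:oopolyhedralprod}.

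For each fixed $I$, the functor $F \colon \calK \to \calK_I$ defined by $\sigma \mapsto \sigma \cap I$ is cofinal, because for every $\sigma' \in \calK_I$ the slice $\sigma'/F$ has $\sigma' \in \calK$ as an initial object and is therefore weakly contractible. Since $\widehat{D}_I$ factors through $F$ via the smash polyhedral product diagram $\widehat{G}_I$ for the subfamily $(\underline{X}_I,\underline{A}_I)$, cofinality gives $\colim_{\sigma \in \calK} \widehat{D}_I(\sigma) \simeq \widehat{(\underline{X},\underline{A})}^{K_I}$.

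The principal remaining task, and the hardest part of the argument, is the wedge decomposition
\begin{equation*}
\widehat{(\underline{X},\underline{A})}^{K_I} \simeq \bigvee_{\sigma \in K_I} \B{(\calK_I)_{<\sigma}} \star \widehat{D}(\sigma)
\end{equation*}
under the null hypothesis. Every non-identity morphism of $\widehat{G}_I$ is null-homotopic because it smashes with some null $\iota_j$, and for such a diagram on a finite poset one expects the colimit to wedge-decompose, with the $\sigma$-summand equal to the join $\B{(\calK_I)_{<\sigma}} \star \widehat{D}(\sigma)$. I would prove this by induction on the faces of $K_I$, adding one face at a time in an order refining the inclusion ordering and at each step applying the homotopy pushout formula for null attaching maps to split off a new wedge summand; the main technical subtlety is keeping track of how the realizations $\B{(\calK_I)_{<\sigma}}$ grow coherently as additional faces are added. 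Substituting this decomposition back into the cofinality identification and distributing $\Sigma$ through the wedge yields the stated equivalence.
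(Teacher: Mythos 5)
The first two steps of your proposal line up with the paper's Proposition \ref{prop:stableSmashProdSplit}: both use the iterated Ganea splitting to reduce to smash polyhedral products, and your single cofinality argument via $F\colon\calK\to\calK_I$, $\sigma\mapsto\sigma\cap I$ is a cleaner formulation of what the paper does by deleting one vertex $i\notin I$ at a time. The gap is in the third step, the wedge decomposition of the smash polyhedral product.

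You propose to build $K_I$ one face at a time and claim each pushout ``splits off a new wedge summand.'' That is not what happens. Suppose you attach a new face $\sigma$ all of whose proper subfaces already lie in the intermediate complex $K'$. The attaching pushout has one leg $\colim_{\tau\subsetneq\sigma}\widehat{D}(\tau)\to\widehat{D}(\sigma)$, which is null, and the other leg $\colim_{\tau\subsetneq\sigma}\widehat{D}(\tau)\to\widehat{(\underline{X},\underline{A})}^{K'}$; the pushout is therefore the cofiber of the second leg wedged with $\widehat{D}(\sigma)$, not $\widehat{(\underline{X},\underline{A})}^{K'}\vee\widehat{D}(\sigma)$. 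Moreover, for every $\tau$ properly contained in $\sigma$, the poset of faces strictly containing $\tau$ acquires the new element $\sigma$, so all the joins $\B{(\calK_I)_{<\tau}}\star\widehat{D}(\tau)$ in the target formula change at once. Your sketch neither explains how a single pushout simultaneously enlarges these joins, nor why the cofiber piece reorganizes into the new wedge; you flag this as the ``main technical subtlety'' but do not resolve it, and it is precisely the content of the proof. The paper sidesteps the induction entirely: it replaces $\widehat{D}$ by an equivalent diagram with strictly constant structure maps, decomposes that diagram at the level of diagrams into a wedge of indicator diagrams $X[\sigma]$ concentrated at single faces (Lemma \ref{lem:WedgeLemma}), and computes each summand's colimit by a left Kan extension along $\calK_{\leq\sigma}\hookrightarrow\calK$ followed by the initial-diagram Lemma \ref{lem:initialdiagram}, producing $\B{\calK_{<\sigma}}\star\widehat{D}(\sigma)$ in one stroke. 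This global decomposition is exactly the lemma your inductive sketch is missing; to salvage the induction you would in effect have to prove an equivalent statement at every step.
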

		We will postpone the proof of the theorem until the end of the section. When each~$X_i$ is contractible Theorem \ref{thm:stableSplitting} simplifies to the following.
		\begin{corollary}
			\label{cor:stableSplittingContractible}
			Let $K$ be a simplicial complex with $m$ vertices and let $(\underline{X},\underline{A})$ be a family of pairs where each~$X_i$ is contractible. Then there is an equivalence 
			\begin{equation*}
				\Sigma (\underline{X},\underline{A})^K \simeq \Sigma  \bigvee_{I\not \in K}  \B{K_I}\star \widehat{A}^I. 
			\end{equation*}
		\end{corollary}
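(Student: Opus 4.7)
The plan is to derive the corollary directly from Theorem~\ref{thm:stableSplitting} by collapsing all terms in which either the smash piece $\widehat{D}(\sigma)$ or the poset realization $|K_I|$ is contractible. First, one observes that the hypothesis of Theorem~\ref{thm:stableSplitting} is satisfied: since $X_i\simeq \ast$, each $\iota_i\colon A_i\to X_i$ factors through the terminal object and is therefore null. So we may write
\begin{equation*}
\Sigma(\underline{X},\underline{A})^K \simeq \Sigma\!\left(\bigvee_{I\subset[m]}\bigvee_{\sigma\in K_I} |(\calK_I)_{<\sigma}|\star \widehat{D}(\sigma)\right).
\end{equation*}

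Next, I would analyze the inner wedge term by term. For $\sigma\in K_I$ with $\sigma\neq\emptyset$, the factor $\widehat{D}(\sigma)=\bigwedge_{i\in I}Y_i$ contains at least one copy of $X_i\simeq\ast$ (namely any $i\in\sigma$), and by writing the smash product as a pushout and using that $\mathcal{C}$ is cartesian closed, smashing a pointed object with a contractible object yields the terminal object. Thus $\widehat{D}(\sigma)\simeq\ast$ and consequently $|(\calK_I)_{<\sigma}|\star\widehat{D}(\sigma)\simeq\ast$ via the identification $X\star Y\simeq\Sigma X\wedge Y$ from Lemma~3.5 cited above. Only the term $\sigma=\emptyset$ survives, in which case $\widehat{D}(\emptyset)=\widehat{A}^I$ and $|(\calK_I)_{<\emptyset}|=|K_I|$, leaving
\begin{equation*}
\Sigma(\underline{X},\underline{A})^K \simeq \Sigma\!\left(\bigvee_{I\subset[m]} |K_I|\star \widehat{A}^I\right).
\end{equation*}

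Finally, I would prune the indexing set: if $I\in K$ then $K_I$ is the full simplex on the vertex set $I$, whose face category has a terminal object $I$, so $|K_I|\simeq\ast$ and $|K_I|\star\widehat{A}^I\simeq\ast$ (this also covers $I=\emptyset$, since the empty face belongs to $K$). Restricting the wedge to indices $I\not\in K$ therefore leaves the expression unchanged up to equivalence, yielding the claimed decomposition. The only step requiring any care is the justification that $\widehat{D}(\sigma)\simeq\ast$ whenever some factor is contractible; this is the one place where cartesian closedness of $\mathcal{C}$ (ensuring the pushout defining the smash product behaves as expected) is really used, but it is routine once unpacked.
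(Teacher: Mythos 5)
Your proof is correct and follows essentially the same route as the paper: apply Theorem~\ref{thm:stableSplitting} (noting that contractibility of each $X_i$ forces $\iota_i$ to be null), observe that $\widehat{D}(\sigma)$ is contractible for $\sigma\neq\emptyset$ so only the $\sigma=\emptyset$ terms survive, and then discard the indices $I\in K$ since $|K_I|$ is contractible there. You spell out a few steps the paper leaves implicit (why smashing with a contractible object yields the terminal object, and why $|K_I|\simeq\ast$ for $I\in K$ via the terminal face), but these are verifications rather than a different strategy.
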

		\begin{remark}
			Proposition \ref{prop:stableSmashProdSplit} and Theorem~\ref{thm:stableSplitting} are $\infty$-categorical versions of Theorem~2.10 and 2.13 in \cite{BBCG}. 
		\end{remark}
		To prove Theorem \ref{thm:stableSplitting} and Corollary \ref{cor:stableSplittingContractible} a collection of results will be needed. We will follow the proof strategy of \cite{BBCG}. The following result is known as the Ganea splitting. 
		\begin{lemma}[{\cite[Corollary 2.24.2]{Devalapurkar2021}}]
			\label{lem:ganeasplitting}
			Let $\calC$ be an  $\infty$-category with finite limits and pushouts. Then, for every pair of pointed objects $X,Y \in \calC$, there is a natural equivalence~$\Sigma (X \times Y) \simeq \Sigma X \vee \Sigma Y \vee \Sigma(X \wedge Y)$.
		\end{lemma}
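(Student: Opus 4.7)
The plan is to exhibit a cofiber sequence whose cofiber is $\Sigma(X\wedge Y)$ and whose first map $\Sigma X \vee \Sigma Y \to \Sigma(X\times Y)$ admits a retraction; the sequence then splits and yields the stated decomposition.

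First, by Definition~\ref{def:wedge} and the definition of the smash product, the canonical square
\begin{equation*}
\begin{tikzcd}
X \vee Y \ar[r] \ar[d] & X \times Y \ar[d] \\
* \ar[r] & X \wedge Y
\end{tikzcd}
\end{equation*}
is a pushout, so $X \vee Y \to X \times Y \to X \wedge Y$ is a cofiber sequence in $\calC$. Since suspension is itself defined by a pushout with terminal objects and pushouts commute with pushouts in any $\infty$-category with finite colimits, applying $\Sigma$ produces a cofiber sequence
\begin{equation*}
\Sigma X \vee \Sigma Y \longrightarrow \Sigma(X\times Y) \longrightarrow \Sigma(X\wedge Y),
\end{equation*}
where I have also used the standard equivalence $\Sigma(X\vee Y) \simeq \Sigma X \vee \Sigma Y$.

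Next, I would build a retraction $r \colon \Sigma(X\times Y) \to \Sigma X \vee \Sigma Y$ from the pointed projections $p_X \colon X\times Y \to X$ and $p_Y \colon X\times Y \to Y$. After suspension, these give maps into $\Sigma X$ and $\Sigma Y$, which compose with the wedge inclusions to yield two maps $\Sigma(X\times Y) \to \Sigma X \vee \Sigma Y$. Using the canonical co-H-object structure on any suspension (the pinch map $\Sigma(X\times Y) \to \Sigma(X\times Y) \vee \Sigma(X\times Y)$), I would sum these two maps to define $r$. To verify that $r$ is a retraction, one checks each wedge summand separately: the composite $X \hookrightarrow X \vee Y \to X \times Y \xrightarrow{p_X} X$ is the identity while the analogous composite with $p_Y$ is null, and symmetrically for $Y$. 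Hence after suspension $r$ restricted to $\Sigma X \vee \Sigma Y$ is the identity. A cofiber sequence whose first map admits a retraction splits as a wedge, which gives the desired equivalence.

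The main obstacle is making the co-H-object structure and the addition of maps out of a suspension coherent in an $\infty$-category that is neither stable nor presentable in any special way. This is exactly the content of the cited reference \cite[Corollary 2.24.2]{Devalapurkar2021}, which recasts the argument entirely in terms of manipulations of pushout squares, so the retraction is produced by a direct diagrammatic construction rather than by invoking co-H-structures explicitly; this keeps the proof within the minimal hypotheses on $\calC$ (finite limits and pushouts).
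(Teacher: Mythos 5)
The paper does not supply a proof of this lemma at all: it is quoted verbatim from the cited reference, and the result is used as a black box. So there is no in-paper argument to compare against; what you have written is an independent sketch.

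Your sketch is the classical Ganea strategy, and most of it is sound: the smash square gives a cofiber sequence $X\vee Y \to X\times Y \to X\wedge Y$, suspension preserves pushouts and hence cofiber sequences, $\Sigma$ commutes with $\vee$, and the suspended projections give candidate retractions. The weak link is the final assertion that \emph{a cofiber sequence whose first map admits a retraction splits as a wedge}. In an unstable $\infty$-category this is false in general: a retraction $r$ of $f\colon A\to B$ does not by itself produce a map from the cofiber $C$ back into $B$, so there is no obvious comparison map $A\vee C \to B$ to even test for being an equivalence. The classical fix is to exploit the co-group structure on $\Sigma(X\times Y)$ to form $\mathrm{id}-jr$, observe that it kills the image of $j$, and factor it through the cofiber to get a section $s\colon \Sigma(X\wedge Y)\to \Sigma(X\times Y)$; but even then, showing $(j,s)$ assembles to an equivalence $\Sigma X\vee\Sigma Y\vee\Sigma(X\wedge Y)\xrightarrow{\sim}\Sigma(X\times Y)$ requires an additional input (classically this is a Hurewicz/homology argument, which is unavailable in an arbitrary $\infty$-category with only finite limits and pushouts). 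You do flag exactly this subtlety and correctly attribute it to the cited source, which is honest and appropriate; but as written, the bald splitting claim should be qualified, since it is precisely the step that the citation is carrying.
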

		The stable splitting of a product gives a nice description of the stable splitting of larger products. Consecutive applications of the Ganea splitting yields the following result.
		\begin{corollary}
			\label{lem:iteratedGanea}
			Let $Y_i$ be pointed objects in $\calC$. There is an equivalence
			\begin{equation*}
				\Sigma (Y_1 \times \ldots \times Y_m) \simeq \Sigma \bigvee_{I \subset [m]} \widehat{Y}^I.
			\end{equation*}
		\end{corollary}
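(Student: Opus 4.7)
The plan is to induct on $m$, with Lemma~\ref{lem:ganeasplitting} (Ganea splitting) as the base case $m=2$ and the engine of the inductive step. For $m=1$ the statement is trivial (both sides equal $\Sigma Y_1$, and the convention being used is that the wedge runs over nonempty subsets $\emptyset\neq I\subset[m]$).

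For the inductive step, assume the equivalence has been established for $m-1$ factors. Write
\begin{equation*}
Y_1\times\ldots\times Y_m \simeq (Y_1\times\ldots\times Y_{m-1})\times Y_m
\end{equation*}
and apply Lemma~\ref{lem:ganeasplitting} with $X=Y_1\times\ldots\times Y_{m-1}$ and $Y=Y_m$ to obtain
\begin{equation*}
\Sigma(Y_1\times\ldots\times Y_m)\simeq \Sigma(Y_1\times\ldots\times Y_{m-1})\vee \Sigma Y_m \vee \Sigma\bigl((Y_1\times\ldots\times Y_{m-1})\wedge Y_m\bigr).
\end{equation*}
The first summand is handled directly by the inductive hypothesis. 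To handle the third summand I would use the identity $\Sigma(X\wedge Y)\simeq(\Sigma X)\wedge Y$, which holds in any $\infty$-category with the required colimits because the pushout defining the smash product commutes with the pushout defining the suspension. Then, since $\calC$ is cartesian closed, smashing with $Y_m$ preserves colimits (the smash product $-\wedge Y_m$ can be built out of the product and pushouts, both of which commute with colimits under the cartesian closure hypothesis); in particular it preserves wedges. Thus the inductive hypothesis, applied after pulling $\Sigma$ inside, gives
\begin{equation*}
\Sigma\bigl((Y_1\times\ldots\times Y_{m-1})\wedge Y_m\bigr)\simeq (\Sigma(Y_1\times\ldots\times Y_{m-1}))\wedge Y_m \simeq \Sigma\bigvee_{\emptyset\neq I\subset[m-1]}\widehat{Y}^I\wedge Y_m,
\end{equation*}
and $\widehat{Y}^I\wedge Y_m\simeq \widehat{Y}^{I\cup\{m\}}$.

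The final step is bookkeeping: the three summands contribute respectively the nonempty subsets of $[m-1]$, the singleton $\{m\}$, and the subsets of $[m]$ strictly containing $\{m\}$; together these partition the collection of nonempty subsets of $[m]$, yielding the claimed equivalence. The only genuine subtlety — and the one point that would require a sentence of justification rather than routine manipulation — is the compatibility $\Sigma(X\wedge Y)\simeq(\Sigma X)\wedge Y$ together with the distributivity of smash over wedge, both of which follow from the assumption that $\calC$ is cartesian closed with small colimits, so that $-\times Z$ preserves the pushouts defining wedge, smash, and suspension.
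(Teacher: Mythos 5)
Your proof is correct and matches the paper's (unwritten) argument: the paper simply states that ``consecutive applications of the Ganea splitting'' yield the result, and your induction is the natural elaboration of that, including the correct observation that $-\wedge Y_m$ preserves wedges and commutes with suspension because products preserve colimits in a cartesian closed $\infty$-category. You also correctly note the implicit convention that the wedge runs over nonempty $I\subset[m]$.
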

		Recall that $\widehat{(\underline{X},\underline{A})}^K$ denotes the polyhedral smash product from Definition \ref{def:smashpolyhedral}. 
		
		\begin{proposition}
			\label{prop:stableSmashProdSplit}
			Given a simplicial complex $K$ with $m$ vertices and a family of pairs~$(\underline{X},\underline{A})$, we have the following natural equivalence
			\begin{equation*}
				\Sigma(\underline{X},\underline{A})^K \simeq \Sigma \left( \bigvee_{I\subset [m] } \widehat{(\underline{X},\underline{A})}^{K_I}\right).
			\end{equation*}
		\end{proposition}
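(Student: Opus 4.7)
The plan is to apply the iterated Ganea splitting from Corollary \ref{lem:iteratedGanea} termwise inside the colimit defining $(\underline{X},\underline{A})^K$, and then reorganize the resulting iterated wedge by slicing according to subsets $I \subset [m]$. The starting point is that since $\Sigma$ is itself a colimit and $\calC$ is cartesian closed with small colimits, $\Sigma$ commutes with the colimit defining $(\underline{X},\underline{A})^K$, so
\begin{equation*}
\Sigma (\underline{X},\underline{A})^K \simeq \colim_{\sigma \in \calK} \Sigma D(\sigma).
\end{equation*}

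Next, I would apply Corollary \ref{lem:iteratedGanea} to each $D(\sigma) = \prod_{i=1}^m Y_i$, yielding a natural equivalence $\Sigma D(\sigma) \simeq \Sigma \bigvee_{I \subset [m]} \bigwedge_{i \in I} Y_i$. Writing $\widehat{D}(\sigma)_I := \bigwedge_{i \in I} Y_i$, naturality of the Ganea splitting shows that the transition map $D(\sigma) \to D(\tau)$ induces on the $I$-summand the smash of the component maps $\iota_i$ (for $i \in \tau \setminus \sigma$) and identities. Since wedges are colimits and colimits commute with colimits, I can swap $\colim_{\sigma \in \calK}$ past the wedge and the suspension to obtain
\begin{equation*}
\Sigma (\underline{X},\underline{A})^K \simeq \Sigma \bigvee_{I \subset [m]} \colim_{\sigma \in \calK} \widehat{D}(\sigma)_I.
\end{equation*}

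The main obstacle is identifying each inner colimit with the smash polyhedral product $\widehat{(\underline{X},\underline{A})}^{K_I}$. The key observation is that $\widehat{D}(\sigma)_I$ only depends on $\sigma \cap I$: for $i \in I$ the wedge factor is $X_i$ if $i \in \sigma$ and $A_i$ if $i \notin \sigma$, which is exactly $\widehat{D}_{K_I}(\sigma \cap I)$ (in the notation of Definition \ref{def:smashpolyhedral}). So the diagram factors as $\calK \xrightarrow{F} \calK_I \xrightarrow{\widehat{D}_{K_I}} \calC$ with $F(\sigma) = \sigma \cap I$. I then need to show $F$ is cofinal, so that it induces an equivalence on colimits. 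Given $\tau \in \calK_I$, faces of $K_I$ are themselves faces of $K$ (since $\tau \subseteq \sigma$ with $\sigma \in K$ forces $\tau \in K$), and the slice $\calK \times_{\calK_I} (\calK_I)_{\tau/}$ reduces to $\{\sigma \in K : \tau \subseteq \sigma\}$, which has $\tau$ itself as an initial object and is therefore weakly contractible. Cofinality of $F$ then gives $\colim_{\sigma \in \calK} \widehat{D}(\sigma)_I \simeq \widehat{(\underline{X},\underline{A})}^{K_I}$, completing the proof.
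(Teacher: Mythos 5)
Your proof is correct and follows the same outline as the paper for the first three steps: commute $\Sigma$ past the colimit, apply the iterated Ganea splitting $\Sigma D(\sigma) \simeq \Sigma \bigvee_{I \subset [m]} \widehat{D}_I(\sigma)$ termwise, and use naturality to swap the outer colimit past the wedge. The only place you diverge is in identifying each inner colimit $\colim_{\sigma \in \calK} \widehat{D}_I(\sigma)$ with $\widehat{(\underline{X},\underline{A})}^{K_I}$. The paper observes that for each $i \notin I$ the transition maps collapsing vertex $i$ are identities, and iteratively removes the vertices outside $I$ one at a time; you instead factor the diagram through the projection $F\colon \calK \to \calK_I$, $\sigma \mapsto \sigma \cap I$, and prove $F$ cofinal in one stroke by checking that each comma category $\tau \downarrow F = \{\sigma \in K : \tau \subseteq \sigma\}$ has $\tau$ as an initial object and is hence weakly contractible. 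Your cofinality argument is the $\infty$-categorical justification underlying the paper's more informal ``iterate this process'' phrasing; it is cleaner, directly invokes the standard Theorem-A criterion, and is the version I would recommend writing up. Both arguments are correct and rest on the same key decomposition.
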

		\begin{proof}
			For $I \in [m]$ and  $\sigma \in K$, define
			\begin{equation*}
				\widehat{D}_I(\sigma):= \bigwedge_{i\in I} Y_i \quad \text{where} \quad Y_i=\begin{cases}X_i & \text{if } i \in \sigma, \\ A_i & \text{if } i \not\in \sigma.\end{cases}
			\end{equation*}
			Since suspending commutes with colimits, there is an equivalence
   \begin{equation*}
       \Sigma(\underline{X},\underline{A})^K \simeq \Sigma \colim_{\sigma \in K}  D(\sigma) \simeq \colim_{\sigma \in K}  \Sigma D(\sigma).
   \end{equation*}
   
   By Corollary \ref{lem:iteratedGanea} we can describe $\Sigma D(\sigma)$ for $\sigma \in K$. For each $\sigma \in K$, there is an equivalence
   \begin{equation*}
       \Sigma D(\sigma) = \Sigma \bigvee_{I \subset [m]} \widehat{D}_I(\sigma).
   \end{equation*}
   Since the equivalence of Corollary \ref{lem:iteratedGanea} was natural and colimits commute, there is an equivalence 
			\begin{equation*}
				\Sigma(\underline{X},\underline{A})^K  \simeq  \colim_{\sigma \in K} \left( \Sigma \bigvee_{I\subset [m]}  \widehat{D}_I(\sigma) \right) \simeq \Sigma \left( \bigvee_{I\subset [m]} \colim_{\sigma \in K} \widehat{D}_I(\sigma) \right).
			\end{equation*}
			Now fix some $I\subset [m]$ and consider the colimit $\displaystyle \colim_{\sigma \in K}  \widehat{D}_I(\sigma)$. For each $i \not \in I$, the maps induced by $\tau \subset \sigma$ where $\tau$ is obtained from $\sigma$ by removing vertex $i$ are identity maps. Let $K\ \setminus \{i\}$ denote the full subcomplex $K_{\{1, \ldots, i-1, i+1, \ldots ,m\}}$. There is an equivalence
   \begin{equation*}
       \colim_{\sigma \in K}  \widehat{D}_I(\sigma)\simeq  \colim_{\sigma \in K\setminus\{i\}}  \widehat{D}(\sigma).
   \end{equation*}
   Iterating this process for each $i\not \in I$ yields, 
   \begin{equation*}
       \colim_{\sigma \in K}  \widehat{D}_I(\sigma)\simeq  \colim_{\sigma \in K\setminus\{i\}}  \widehat{D}(\sigma) \simeq \colim_{\sigma \in K_I}  \widehat{D}(\sigma) \simeq \widehat{(\underline{X},\underline{A})}^{K_I}. \qedhere
   \end{equation*}
		\end{proof}
		The previous proposition reduces the question about the stable homotopy type of a polyhedral product to understanding the homotopy type of $\widehat{(\underline{X},\underline{A})}^K$. 
  \begin{proposition}
  \label{prop:smashPolyhedralSplit}
      Let $K$ be a simplicial complex and consider a family of pairs $(\underline{X},\underline{A})$ where the map $\iota_i \colon A_i \to X_i$ is null for all $i$. Then there is an equivalence 
      \begin{equation*}
          \widehat{(\underline{X},\underline{A})}^K \simeq \bigvee_{\sigma \in K} \B{\calK_{<\sigma}}\star \widehat{D}(\sigma).
      \end{equation*}
  \end{proposition}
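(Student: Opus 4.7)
The plan is to proceed by induction on the number of faces of $K$, attaching one maximal face at a time via a pushout and exploiting the null hypothesis on the maps $\iota_i$. The key initial observation is that because each $\iota_i\colon A_i\to X_i$ is null, every non-identity structure map $\widehat{D}(\sigma)\to\widehat{D}(\tau)$ (for $\sigma\subsetneq\tau$) in the indexing diagram is a smash product containing some null factor $\iota_i$ with $i\in\tau\setminus\sigma$, and is therefore itself null in $\calC$.

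For the inductive step, pick a maximal face $\sigma\in K$, set $L=K\setminus\{\sigma\}$, and write $\overline\sigma$ (resp.\ $\partial\sigma$) for the subcomplex of $K$ on $[m]$ consisting of $\sigma$ together with its subfaces (resp.\ only its proper subfaces). A smash-product analogue of Proposition~\ref{prop:PushoutPolyProd} — proved by the same left-Kan-extension argument, since $\wedge$ preserves colimits in each pointed variable of a cartesian closed $\infty$-category — yields a pushout square of smash polyhedral products corresponding to the decomposition $K\simeq L\cup_{\partial\sigma}\overline\sigma$. Cofinality ($\sigma$ is terminal in the face category of $\overline\sigma$) simplifies one corner to $\widehat D(\sigma)$, and the corresponding attaching map from $\widehat{(\underline X,\underline A)}^{\partial\sigma}$ is null by the observation above. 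Combining the split cofibre sequence arising from this null pushout with the inductive hypothesis applied to both $L$ (where $\calL_{<\tau}=\calK_{<\tau}$ for every $\tau\in L$, so the expected $L$-summands appear intact) and to $\partial\sigma$, and using the identity $X\star Y\simeq\Sigma X\wedge Y$ from the preceding lemma to translate between joins and smash-suspensions, one reassembles the target wedge decomposition: the contribution from attaching the top face $\sigma$ collapses into the single new summand $\B{\calK_{<\sigma}}\star\widehat D(\sigma)$.

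The main obstacle is precisely this last identification. One must verify that the inductive decomposition of $\widehat{(\underline X,\underline A)}^{\partial\sigma}$, after being pushed through the null attaching map and collapsed into a cofibre, telescopes into exactly the single join summand $\B{\calK_{<\sigma}}\star\widehat D(\sigma)$ rather than producing a wedge of spurious extra pieces. The delicate bookkeeping — keeping track of how the classifying space $\B{\calK_{<\sigma}}$ is assembled by successive null-map pushouts over the proper subfaces of $\sigma$ — relies on the naturality of the splitting and on careful use of the join/smash identity.
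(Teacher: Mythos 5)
Your plan is a genuinely different route from the paper's, which proves this without induction: after observing that the null maps $\iota_i$ let one replace $\widehat D$ by an equivalent diagram $\widehat E$ whose nonidentity morphisms are \emph{constant}, it decomposes $\widehat E$ as a wedge, over all $\sigma\in K$, of diagrams supported at a single object $\sigma$, and computes each colimit as $\B{\calK_{<\sigma}}\star\widehat D(\sigma)$ via the initial-diagram lemma (Lemmas~\ref{lem:initialdiagram}, \ref{lem:WedgeLemma}). Your ``peel off one maximal face'' induction is a reasonable alternative, but as sketched it contains a false step and mis-locates the real difficulty.

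The parenthetical claim ``$\calL_{<\tau}=\calK_{<\tau}$ for every $\tau\in L$, so the expected $L$-summands appear intact'' is false under the conventions this proposition needs. Here $\calK_{<\tau}$ denotes the undercategory of $\tau$ with $\tau$ itself removed, i.e.\ the subposet of faces strictly \emph{containing} $\tau$ (this is forced by $\B{K}=\B{\calK_{<\emptyset}}$ in Definition~\ref{def:simpcplxRealization} and by the proof of Lemma~\ref{lem:initialdiagram}), not the poset of proper subfaces. If $\sigma$ is maximal and $\tau\subsetneq\sigma$, then $\sigma\in\calK_{<\tau}\setminus\calL_{<\tau}$, so these posets differ and $\B{\calL_{<\tau}}\star\widehat D(\tau)$ is \emph{not} the desired summand $\B{\calK_{<\tau}}\star\widehat D(\tau)$. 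Consistently with this, a null attaching map yields $\widehat{(\underline X,\underline A)}^K\simeq\bigl(\widehat{(\underline X,\underline A)}^L/\widehat{(\underline X,\underline A)}^{\overline{\partial\sigma}}\bigr)\vee\widehat D(\sigma)$, not $\widehat{(\underline X,\underline A)}^L\vee\widehat D(\sigma)$: the quotient modifies exactly the summands with $\tau\subsetneq\sigma$. Meanwhile the ``main obstacle'' you flag is not an obstacle at all: for $\sigma$ maximal, $\calK_{<\sigma}$ is empty, so $\B{\calK_{<\sigma}}\star\widehat D(\sigma)\simeq\widehat D(\sigma)$ is precisely the split-off wedge factor. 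The genuine work you omit is showing, for each $\tau\subsetneq\sigma$, that the cofibre of $\B{(\overline{\partial\sigma})_{<\tau}}\star\widehat D(\tau)\to\B{\calL_{<\tau}}\star\widehat D(\tau)$ recovers $\B{\calK_{<\tau}}\star\widehat D(\tau)$; this does hold, because the subposet $(\overline{\partial\sigma})_{<\tau}\cup\{\sigma\}$ has $\sigma$ as terminal object and hence contractible realization, making $\B{\calK_{<\tau}}$ the mapping cone of $\B{(\overline{\partial\sigma})_{<\tau}}\to\B{\calL_{<\tau}}$. With that correction the induction can be closed, but it essentially re-derives, one face at a time, exactly what the paper's wedge lemma gives in a single step.
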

We will postpone the proof of Proposition \ref{prop:smashPolyhedralSplit} until Section \ref{sec:propproof}. We can now prove Theorem \ref{thm:stableSplitting} and Corollary \ref{cor:stableSplittingContractible}.
\begin{proof}[Proof of Theorem \ref{thm:stableSplitting}]
		The assertion follows by applying the result of Proposition \ref{prop:smashPolyhedralSplit} to the right-hand side of 
  \begin{equation*}
				\Sigma(\underline{X},\underline{A})^K \simeq \Sigma \left( \bigvee_{I\subset [m] } \widehat{(\underline{X},\underline{A})}^{K_I}\right).
			\end{equation*}
  from Proposition \ref{prop:stableSmashProdSplit}. 
\end{proof} 
\begin{proof}[Proof of Corollary \ref{cor:stableSplittingContractible}]
    Fix $I \subset [m]$. Since $X_i$ is contractible, it follows that $\widehat{D}(\sigma)$ is contractible for all $\sigma \in K$ where $\sigma \neq \emptyset$. Consequently, there is an equivalence
    \begin{equation*}
				\Sigma (\underline{X},\underline{A})^K \simeq \Sigma \left( \bigvee_{I\subset [m]}  \left( \B{(\calK_I)_{<\emptyset}}\star\widehat{D}(\emptyset)\right) \right) \simeq \Sigma \bigvee_{I \in K} \B{K_I}\star \widehat{A}^I. 
	\end{equation*}
    The space $\B{K_I}$ is contractible whenever $I \in K$, so we only need to consider $I \not \in K$.
\end{proof}
The rest of this section is dedicated to proving Proposition \ref{prop:smashPolyhedralSplit}.
\subsection{The proof of Proposition \ref{prop:smashPolyhedralSplit}}
\label{sec:propproof}
		We need to introduce some notation before we can get to the technical lemmas. 
  \begin{definition}
			Let $\calD$ be a poset category. A diagram $\frakX\colon \calD \to \calC$ is called a diagram with \emph{constant maps} if for all objects $a,b \in \calD$ and any nonidentity morphism~${f\colon a \to b, f \neq \text{id}_a}$ the map $\frakX(f)\colon \frakX(a)\to \frakX(b)$ is a constant map. In other words, the morphism $f$ can be factored as a composition of maps $\frakX(a) \to * \to \frakX(b)$. 
		\end{definition}
		\begin{definition}
			Let $\calD$ be a poset category. For an object $c \in \calC$ let $\calD_c$ be the diagram with the shape of $\calD$, but every object is mapped to the object $c$. Note that there is an equivalence  $\colim \calD_* \simeq \B{\calD}$ by Definition \ref{def:posetRealization}. Because $\calC$ is cartesian closed, there is an equivalence $\colim \calD_c \simeq c \times \B{\calD}$.
		\end{definition}

  The following is an $\infty$-categorical version of the \emph{initial diagram lemma} found in \cite[Lemma 3.4]{Welker1999}.
		\begin{lemma}
			\label{lem:initialdiagram}
			Let $\frakX$ be an initial diagram with constant maps over a poset category $\calD$ with initial object $a$. If $\frakX(b)=*$ for each $b\neq a$ then there is the following equivalence 
			\begin{equation*}
				\colim_\calD \frakX \simeq \frakX(a) \star \B{\calD_{<a}}. 
			\end{equation*}
			
		\end{lemma}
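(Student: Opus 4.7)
The plan is to decompose $\calD$ categorically as $\{a\} \star \calD_{<a}$, using that $a$ is initial (and so has a unique morphism to every object of $\calD_{<a}$), and then to compute the colimit as a pushout induced by this decomposition.

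First, I would note that restricting $\frakX$ to $\calD_{<a}$ gives the constant diagram at $*$, so its colimit is $\B{\calD_{<a}}$ by Definition~\ref{def:posetRealization}. Next, I would express $\colim_\calD \frakX$ as a pushout of a particular span involving $\frakX(a)$ and $\B{\calD_{<a}}$. The decomposition $\calD \simeq \{a\} \star \calD_{<a}$ of indexing categories, together with the cofinality properties of joins (which can be extracted from the general colimit machinery of Lurie's HTT), identifies the colimit with a pushout whose apex is what we want to identify.

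The crucial observation is that the constant-maps hypothesis makes the gluing data particularly simple. Each transition map $\frakX(a) \to \frakX(b) = *$ for $b \in \calD_{<a}$ factors through the basepoint; assembling these coherently via the cartesian closedness of $\calC$ (so that $\frakX(a) \times (-)$ commutes with colimits), the gluing object can be identified with $\frakX(a) \times \B{\calD_{<a}}$. This yields
\[
\colim_\calD \frakX \simeq \mathrm{pushout}\bigl(\frakX(a) \leftarrow \frakX(a) \times \B{\calD_{<a}} \to \B{\calD_{<a}}\bigr),
\]
and the right-hand side is precisely the join $\frakX(a) \star \B{\calD_{<a}}$ by its pushout definition (given immediately after Definition~\ref{def:wedge}).

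The main obstacle is rigorously identifying the gluing object as $\frakX(a) \times \B{\calD_{<a}}$: it is exactly the constant-maps hypothesis that prevents the gluing term from being some more complex construction depending on the specific structure morphisms, and some care is needed to make this argument precise in the $\infty$-categorical setting, likely by factoring $\frakX$ itself through a pushout of simpler diagrams in $\mathrm{Fun}(\calD, \calC)$ and then commuting the outer colim past this inner pushout.
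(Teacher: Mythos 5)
Your proposal is correct and follows essentially the same route as the paper: decompose $\calD$ as the categorical join $\{a\} \star \calD_{<a}$, model that join as a pushout of categories, use the constant-maps hypothesis to recognize the gluing data over $\calD_{<a}$ as the constant diagram at $\frakX(a)$ (whose colimit is $\frakX(a) \times \B{\calD_{<a}}$ by cartesian closedness), and then commute the outer colimit past this pushout of diagrams, invoking \cite[Proposition 4.4.2.2]{LurieHTT} to compute the colimits termwise and land on the defining pushout of the join $\frakX(a) \star \B{\calD_{<a}}$. The only thing the paper spells out that you leave schematic is the explicit pushout model $\{a\} \leftarrow \calD_{<a}\times\{a\} \rightarrow \calD_{<a}\times\Delta^1$ for the categorical join and the corresponding square of diagrams, but your sketch identifies the same key obstacle and the same resolution.
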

		\begin{proof}
			The diagram category $\calD$ is the category $\calD_{<a}$, but with an initial object. Let $\{a\}$ denote the single object category. We have an equivalence of categorties $\calD \simeq \{a\} \star \calD_{<a}$, where $\star$ denotes the join of categories as in \cite[§1.2.8]{LurieHTT}. To model the join, we have have following pushout of categories
			\begin{equation*}
				\{a\} \longleftarrow \calD_{<a}\times \{a\} \longrightarrow \calD_{<a}\times \Delta^1.
			\end{equation*} The diagram $\frakX$ is induced by the maps 
			\begin{equation*}
				\begin{tikzcd}
					\calD_{<a} \times \{a\} \arrow[d] \arrow[r] \arrow[dr, "\text{const}_{\mathfrak{X}(a)}" description] & \calD_{<a} \times \Delta^1 \arrow[d, "\text{const}_*" description] \\ \{a\} \arrow[r, "\mathfrak{X}(a)" description] 
					& \mathcal{C}.          
				\end{tikzcd}
			\end{equation*}
			The colimit of $\frak X$ is equivalent to the colimit of the diagram of diagrams
			\begin{equation*}
				\frakX(a) \longleftarrow (\calD_{<a})_{\frakX(a)} \longrightarrow  ((\calD_{<a}) \times \Delta^1)_*.
			\end{equation*}
            We have written it as a pushout of diagrams to ease notation. By~\cite[Proposition~4.4.2.2]{LurieHTT} we may first compute the colimits termwise in the pushout diagram. Thus, we are left we with a diagram
			\begin{equation*}
				\frakX(a) \longleftarrow \frakX(a) \times \B{\calD_{<a}} \longrightarrow \B{\calD_{<a}},\end{equation*} 
			which by definition has colimit equal to the join $\frakX(a) \star \B{\calD_{<a}}$.
		\end{proof}
        \begin{lemma}
			\label{lem:WedgeLemma}
			Suppose that $\frakX$ is a diagram where each morphism is null over an indexing poset category $\calD$ with initial object $a$. The colimit of the diagram $\frakX$ has the wedge decomposition
			\begin{equation*}
				\colim \frakX \simeq \bigvee_{a \in \mathrm{Obj}(\calD)}\left(\B{\calD_{<a}} \star \frakX(a)\right).
			\end{equation*}
		\end{lemma}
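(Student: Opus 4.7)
The plan is to decompose $\frakX$ as a wedge of ``point-supported'' subdiagrams, one for each object of $\calD$, and then apply the initial diagram lemma (Lemma~\ref{lem:initialdiagram}) to each summand. The hypothesis that every morphism of $\frakX$ is null is what permits the splitting at the level of diagrams, after which commuting $\colim$ past $\bigvee$ will yield the desired formula.

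Concretely, for each $b \in \mathrm{Obj}(\calD)$ I would define the point-supported diagram $\frakX_b \colon \calD \to \calC$ by $\frakX_b(b) = \frakX(b)$ and $\frakX_b(c) = *$ for $c \neq b$, with every non-identity morphism the unique null (i.e.\ constant) map. The first step is to verify that the evident comparison is an equivalence $\frakX \simeq \bigvee_{b \in \mathrm{Obj}(\calD)} \frakX_b$ in $\mathrm{Fun}(\calD, \calC)$: objectwise, $\bigvee_{b} \frakX_b(c) \simeq \frakX(c)$ since wedging with copies of $*$ is trivial; on morphisms, both sides produce null maps, and any two null maps between the same pair of objects agree (they factor uniquely through $*$). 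Since colimits commute with the colimit $\bigvee$, the problem reduces to computing $\colim_\calD \frakX_b$ for each fixed $b$. I would then restrict $\frakX_b$ to the upper subcategory $\calD_{\geq b} = \{c \in \calD : b \leq c\}$, in which $b$ is the initial object. The restricted diagram carries $\frakX(b)$ at the initial object and $*$ elsewhere, so Lemma~\ref{lem:initialdiagram} yields
\[\colim_{\calD_{\geq b}} \frakX_b|_{\calD_{\geq b}} \simeq \frakX(b) \star \B{\calD_{<b}}.\]

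The hardest part will be justifying that the inclusion $\calD_{\geq b} \hookrightarrow \calD$ preserves the colimit of $\frakX_b$, since this inclusion is generally not cofinal. My plan is to write $\colim_\calD \frakX_b$ as a pushout gluing $\colim_{\calD_{\geq b}} \frakX_b|_{\calD_{\geq b}}$ to the colimit over the complementary subcategory $\calD \setminus \calD_{\geq b}$ along a suitable bridging subcategory. Because $\calD$ has initial object $a$, and $a$ lies in the complement whenever $b \neq a$, the complement inherits $a$ as its initial object; hence its classifying space is contractible and the constant-$*$ colimit over it is $\simeq *$. The bridging map from this contractible piece into $\colim_{\calD_{\geq b}} \frakX_b|_{\calD_{\geq b}}$ is null (it factors through $*$), so the pushout collapses to $\frakX(b) \star \B{\calD_{<b}}$. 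Assembling the summands then yields
\[\colim_\calD \frakX \simeq \bigvee_{b \in \mathrm{Obj}(\calD)} \B{\calD_{<b}} \star \frakX(b),\]
as claimed.
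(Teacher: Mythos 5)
Your overall strategy matches the paper's proof of Lemma~\ref{lem:WedgeLemma}: decompose $\frakX$ as a wedge of point-supported subdiagrams, commute $\colim$ past $\bigvee$, restrict each summand to the undercategory of its support object, and apply Lemma~\ref{lem:initialdiagram}. The paper's $\calD_{\leq a}$ (the undercategory of $a$) is exactly your $\calD_{\geq b}$, and your steps 1--3 and the final application of the initial-diagram lemma are the same as the paper's.

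Where you diverge is in the justification that restricting along $\calD_{\geq b} \hookrightarrow \calD$ preserves the colimit of $\frakX_b$, and there is a genuine gap here. You propose to write $\colim_\calD \frakX_b$ as a pushout of $\colim_{\calD_{\geq b}}\frakX_b$ and the colimit over the complement along a ``suitable bridging subcategory,'' but you never identify that bridge, and no such pushout decomposition of $\calD$ exists in the naive sense: the cosieve $\calD_{\geq b}$ and its complementary sieve are \emph{disjoint} full subcategories related by non-invertible morphisms (e.g.\ $a \to b$), so their union is not a pushout of categories over any intersection. Already for $\calD = \{a < b\}$ with support at $b$, your two pieces are the singletons $\{a\}$ and $\{b\}$, yet $\Delta^1$ is their join, not a pushout, and $\colim_{\Delta^1}F$ is not recovered from $\colim_{\{a\}}F$ and $\colim_{\{b\}}F$ by gluing. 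The clean argument, and the one the paper uses, avoids this entirely: since $\calD_{\geq b}$ is a cosieve and $\frakX_b$ takes the value $*$ (the zero object in the pointed setting) on the complementary sieve, $\frakX_b$ \emph{is} the left Kan extension of $\frakX_b|_{\calD_{\geq b}}$ along the inclusion. Indeed, the relevant comma category over any $c$ is $\{e \in \calD_{\geq b} : e \leq c\}$, which either has $c$ as a terminal object (when $c \geq b$) or is empty (when $c \not\geq b$), and in both cases the pointwise Kan-extension formula returns $\frakX_b(c)$. Since left Kan extension along a functor preserves colimits, $\colim_\calD \frakX_b \simeq \colim_{\calD_{\geq b}} \frakX_b|_{\calD_{\geq b}}$ follows formally, with no pushout of categories needed. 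I would replace your gluing paragraph with this observation.
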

  \begin{proof}
        We will start by defining a couple of necessary diagrams. For each object $a \in \calD$, let~${X[a]  \colon \calD\to \calC}$ be the diagram such that ~$a \mapsto \frakX(a)$ and $b \mapsto *$ for each  $b\in \calD$ where~$b \not \simeq a$. Similarly, for each object~$a \in \calD$, let~$X[a]' \colon \calD_{\leq a}\to \calC$ be the diagram such that $a \mapsto \frakX(a)$ and ~${b  \mapsto *}$  for each $b \in \calD_{\leq a}$  where $b \not \simeq a$.  Since each morphism in the diagram $\frakX$ is null, it can be decomposed as a wedge of diagrams $X[a]$ for each object~$a \in \calD$. Thus, we have an equivalence 
        \begin{equation*}
            \colim \frakX \simeq \bigvee_{a\in \text{Obj}(\calD)} \colim X[a].
        \end{equation*}
        For each object $a \in  \calD$ the diagram $X[a]$ is a left Kan extension the functors 
        \begin{equation*}
            {X[a]'\colon \calD_{\leq a} \to \calC} \quad \text{and} \quad i\colon \calD_{\leq a} \to \calD.
        \end{equation*} Since left Kan extensions preserve colimits, there is an equivalence $\colim X[a] \simeq \colim X[a]'$.
        The diagram $X[a]'$ satisfies the conditions of Lemma~\ref{lem:initialdiagram}. Hence there is an equivalence 
        \begin{equation*}
            \colim X[a]' \simeq \B{\calD_{<a}}\star \frakX(a). \qedhere
        \end{equation*}   
  \end{proof}
		We can now prove Proposition \ref{prop:smashPolyhedralSplit}.

        \begin{proof}[Proof of Proposition \ref{prop:smashPolyhedralSplit}]
          
			Define the diagram $\widehat{E}\colon \calK \to \calC$ to be given by $\widehat{E}(\sigma) = \widehat{D}(\sigma)$ for all $\sigma \in K$, and for all $\sigma \subsetneq \tau$ the maps $\widehat{e}_{\sigma,\tau}\colon \widehat{E}(\sigma) \to \widehat{E}(\tau)$ to be the constant map to the basepoint. Since the maps $f_i\colon A_i \to X_i$ are null by assumption, the maps between~$\widehat{D}(\sigma)$ and $\widehat{D}(\tau)$ for $\sigma \subsetneq \tau$ will also be null-homotopic. We get the following equivalences 
			\begin{equation*}
				\widehat{(\underline{X},\underline{A})}^{K} \simeq \colim \widehat{D} \simeq \colim \widehat{E} \simeq \bigvee_{\sigma \in K} \B{\calK_{<\sigma}}\star \widehat{D}(\sigma)
			\end{equation*}
			since  $\widehat{E}$ satisfies the conditions of Lemma \ref{lem:WedgeLemma}.
		\end{proof}
		\section{Example categories}
  \label{sec:exampleCats}

In this section we discuss polyhedral products in several categories. In particular in Section \ref{sec:MotivicHomotopy}, we will introduce the category of motivic spaces and motivic moment-angle complexes, which will be our main focus for the rest of the paper. 
  \subsection{The category of spaces}
Let $\calS$ be the $\infty$-category of spaces. In Section \ref{sec:ClassicalPolyhedral} we defined polyhedral products in topological spaces as a union of topological spaces. As long as the map $A_i \to X_i$ is a cofibration, for each pair of spaces~$(X_i,A_i) \in (\underline{X},\underline{A})$, then by \cite[Lemma~3.1]{Welker1999} there is a homotopy equivalence
\begin{equation*}
  (\underline{X},\underline{A})^K :=  \colim_{\sigma \in K} D(\sigma) \simeq \bigcup_{\sigma \in K}D(\sigma).
\end{equation*}
It is important to emphasise that the colimit in $\calS$ is a higher categorical colimit, and thus corresponds to a homotopy colimit in classical homotopy theory. Thus, the results of Section \ref{sec:ClassicalPolyhedral} all follow from Section~\ref{sec:inftysetup} and \ref{sec:stableSplit} by letting $\calC = \calS$. 

		\subsection{The category of $G$-equivariant spaces}
        For a discrete group $G$, one can define the category $\calS^G$ of $G$-equivariant topological spaces. For a $G$-space $X$, denote the fixed points by~$X^G$. A map $f\colon X \to Y$  is a weak equivalence of~$G$-spaces if for every subgroup $H \leq G$ the restricted map $f^H \colon X^H \to Y^H$ is a weak equivalence of spaces. Due to Elmendorf's theorem \cite{Elmendorf}, the category of~$G$-spaces can be seen as a presheaf category, and is in particular an $\infty$-topos. This makes it possible to define $G$-equivariant polyhedral products coming from families of pairs~$(\underline{X},\underline{A})$ of~\hbox{$G$-spaces}. The stable splitting results from Section \ref{sec:stableSplit} also hold. 

        The moment-angle complex $\zk = (D^2,S^1)^K$ can be endowed with both a $C_2$-action (reflection) and a $S^1$-action (rotation). Similarly, the pair $\R\zk = (D^1,S^0)^K$ can be endowed with a $C_2$-action (reflection).  There is a relation between $\zk$ and $\R \zk$ through the fixed points of $\zk$ under the reflection.
        \begin{equation*}
            \zk^{C_2} = ((D^2,S^1)^K)^{C_2} \simeq ((D^2)^{C_2},(S^1)^{C_2})^K \simeq (D^1,S^0)^K = \R \zk.
        \end{equation*}
        When $K$ is a simplicial complex with $m$ vertices, there is an action of the torus ${T = (S^1)^{\times m}}$ on $\zka$ induced by the $S^1$-action on each pair $(D^2,S^1)$. For a freely acting subtorus $L$ of $T$, one can define the \emph{partial quotient} as the quotient $\zk / L$. The partial quotients are topological versions of smooth not necessarily projective toric varieties.
        
        Actions of a group $G$ on the simplicial complex $K$ can also product interesting equivariant examples. This approach does not require the pairs of spaces $(\underline{X},\underline{A})$ to be~$G$-spaces. However, this case does not allow for use of the results developed in Section \ref{sec:stableSplit}. Some of the results do still hold, but with modifications. Fu and Grbi\'c showed \cite[Theorem~3.3]{FuGrbic} that if $K$ is a simplicial complex with a $G$-action, then there is a homotopy $G$-equivalence 
        \begin{equation*}
            \Sigma^2 (X,A)^K \simeq \Sigma^2 \bigvee_{I \in K} \widehat{(X,A)}^K. 
        \end{equation*}
        This is a similar result as Proposition \ref{prop:smashPolyhedralSplit}, but with an extra suspension.
		\subsection{The category of motivic spaces}
  \label{sec:MotivicHomotopy}
  The category of motivic spaces over a base field $k$ was introduced by Morel and Voevodsky in \cite{MorelVoevodsky99}. Roughly speaking, the category of motivic spaces, also known as  the~$\AAA$-homotopy theory of $k$, is a homotopy theory for smooth schemes. Methods and concepts from algebraic topology had been in use in algebraic geometry for a long time before motivic homotopy theory, but the category of motivic spaces allowed for a framework where techniques from algebraic topology could systematically be lifted to algebraic geometry.  

  Let $k$ be a perfect field of characteristic different from $2$. Let $\smk$ be the category of smooth schemes of finite type over $k$. We denote by $\mathrm{PreSh}(\smk)$, the category of simplicial presheaves on $\smk$. We denote by $\Shnis(\smk)$, the category of simplicial Nisnevich sheaves on $\smk$. In motivic homotopy theory the affine line $\AAA$ takes on the role of the interval. We say that a presheaf $\calF\in \mathrm{PreSh}(\smk)$  is $\AAA$-invariant if there is an equivalence $\calF (X) \simeq \calF (X \times \AAA)$. The category of motivic spaces~$\Hk$ is the full subcategory of $\Shnis(\smk)$ spanned by $\AAA$-invariant Nisnevich sheaves $\calF$. In particular, there is a localization functor  $\Lmot\colon \mathrm{PrSh}(\smk) \to \Hk$, which is a left adjoint of the inclusion $\Hk \subset \mathrm{PrSh}(\smk)$. Small colimits are universal in the $\Hk$ \cite[Proposition 3.15]{Hoyois6ff}, which shows that it is cartesian closed. A more detailed explanation of the $\infty$-categorical construction of the category of motivic spaces can be found in \cite[Appendix C]{HoyoisTraceFormula} or \cite[§3]{Hoyois6ff}. A model categorical survey and introduction of unstable motivic homotopy theory can be found in \cite{Antieau2016APF}.
  
We will now survey some results about the motivic homotopy theory which can be found in Chapter 3 of \cite{MorelVoevodsky99}. The category of motivic spaces contains both geometric objects (schemes) and topological objects (simplicial sets). A scheme $X \in \smk$ can be seen as an element of $\Hk$ in the following way. For any $Y \in \smk$, the scheme~$X$ can be viewed as a simplicial presheaf by letting $X(Y) := \mathrm{Hom}_k(Y,X)$, the scheme morphisms of $Y$ to $X$. By motivic localization the presheaf represented by $X$ can be considered an object of $\Hk$. Let $S$ be a simplicial set. One can consider the constant simplicial presheaf $\mathrm{const}_S$. When it is clear that we are working with motivic spaces, we will abuse notation and write $X$ for $\Lmot\mathrm{Hom}_k({-},X) \in \Hk$ and~$S$ for the motivic localization~$\Lmot\mathrm{const}_S.$ 

   We will now look at a some motivic spaces. Recall that the affine line $\AAA$ plays the role of the interval in $\Hk$ and is contractible. That is, there is an equivalence~${\AAA \simeq \Spec(k)}$ in $\Hk$. A prominent feature of motivic homotopy theory is that the spheres are bigraded.  There is the simplicial circle, which is given by the constant simplicial presheaf to a simplicial set model of $S^1$. There is also a geometric circle, the punctured affine line $\Gm$, which is given by the scheme $\Gm := \AAA \setminus 0$. The motivic spheres are created by smashing copies of the geometric and simplicial circles. Let $S^{1,0}$ be the simplicial circle, and $S^{1,1}:=\Gm$. Thus for $a\geq b \geq 0$, we have $S^{a,b} = (S^1)^{\wedge (a-b)} \wedge \Gm^{\wedge b}$. Some of the higher dimensional spheres can be represented by schemes as well. A standard way of constructing the projective line $\PP^1$ in algebraic geometry is by gluing two affine lines along a common~$\Gm$. Thus $\PP^1$ is the colimit of the following diagram 
		\begin{equation*} \AAA \longleftarrow \Gm \longrightarrow \AAA. \end{equation*}Since $\AAA$ is contractible in $\calH(k)$, $\PP^1$ is equivalent to the colimit of the diagram  
		\begin{equation*} \Spec(k) \longleftarrow \Gm \longrightarrow \Spec(k), \end{equation*}
		which is $\Sigma \Gm \simeq S^{1,0} \wedge \Gm$. In other words, $\PP^1$ is equivalent to $S^{2,1}$. Higher dimensional punctured affine spaces are also models for motivic spheres. Let $n>0$, then~${\A^n \setminus 0 \simeq S^{2n-1,n}}$. 
		
  There are also ways of relating motivic homotopy theory to classical homotopy theory.  When $k$ is a subfield of $\C$ (resp.\ $\R$) and $X\in \smk$ , we will write $X(\C)$ (resp.\ $X(\R)$) for its complex (resp.\ real) points as a topological space. This yields a realization functor from $\calH(\C) \to \mathcal{S}$.  Furthermore, there is a second realization functor when $k$ is a subfield of $\R$. Whenever $X$ is a smooth scheme over $\R$, then $X(\C)$ is a $C_2$-space with the action of complex conjugation and we have the relation $X(\C)^{C_2} \simeq X(\R)$ as topological spaces. Thus we can describe the two realization functors as  
  \begin{equation*}
      \mathrm{Re}_\C \colon \calH(\C) \to \mathcal{S} \quad \text{and} \quad \mathrm{Re}_\R \colon \calH(\R) \to \mathcal{S}^{C_2}.
  \end{equation*} Let $S$ be a simplicial set, then both its complex and real realization of its associated object in $\calH(k)$ is the realization $S$ as a topological space, with trivial $C_2$-action under real realization. A reader not familiar with algebraic geometry might not understand why $\Gm$ could be an algebraic sphere at first glance. Consider the real number line, when we remove the origin we get a topological space which has the homotopy type of~$S^0$. Now consider the complex numbers, the space $\C\setminus 0$ has the homotopy type of $S^1$. So $\Gm(\C) \simeq S^1$ and $\Gm(\R) \simeq S^0$. For an arbitrary motivic sphere $S^{a,b}$, with $a\geq b \geq 0$,  we have $\mathrm{Re}_\C(S^{a,b}) \simeq S^a$ and $(\mathrm{Re}_\R(S^{a,b}))^{C_2} \simeq S^{a-b}$.
		
		As described in Section \ref{sec:MACs},  the moment-angle complex $(D^2,S^1)^K$ and the real moment-angle complex $(D^1,S^0)^K$ have both been extensively studied in the category of spaces. We will now introduce the \emph{motivic moment-angle complex}.
		\begin{definition}
			Let $K$ be a simplicial complex, we define the \emph{motivic moment-angle complex}~$\zka$ to be the polyhedral product 
			\begin{equation*}
				\zka := (\AAA,\Gm)^K
			\end{equation*}
   in the $\infty$-category $\Hk$.
		\end{definition}
        \begin{remark}
            When $k$ is a subfield of $\C$, using complex realization yields the equivalence~${\zka(\C) \simeq (\C,\C^\times)^K}$ which deformation retracts onto $\zk$ \cite[Theorem 4.7.5]{BuchPan}. Furthermore, if $k$ is a subfield of $\R$, there is a deformation retraction $\zka(\R) \simeq \R\zk$. 
        \end{remark}
        As noted earlier, the $\Hk$ is cartesian closed and has all small colimits. This makes it possible to apply Theorem \ref{thm:stableSplitting}. Since $\AAA$ is contractible, the following result an application of Corollary \ref{cor:stableSplittingContractible}.
		
		\begin{theorem}
			\label{thm:stableMMACsplit}
			Let $K$ be a simplicial complex. Then there is an equivalence in $\calH(k)$
			\begin{equation*}
				\Sigma \zka \simeq  \Sigma \left(\bigvee_{I \not \in K} \B{K_I}\star\Gm^{\wedge|I|}\right) \simeq \bigvee_{I \not \in K} \B{K_I}\wedge S^{|I|+2,|I|}.
			\end{equation*}
		\end{theorem}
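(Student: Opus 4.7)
The plan is to recognize this as a direct application of Corollary \ref{cor:stableSplittingContractible} in the $\infty$-category $\calH(k)$, followed by an identification of the join and iterated smash product with the appropriate motivic sphere. First I would verify the hypotheses: the $\infty$-category $\calH(k)$ is cartesian closed and admits all small colimits (as recalled in Section \ref{sec:MotivicHomotopy} via \cite[Proposition 3.15]{Hoyois6ff}), so the abstract machinery of Sections \ref{sec:inftysetup} and \ref{sec:stableSplit} is available. Each pair in the constant family is $(\AAA,\Gm)$ with the standard open immersion as the structure map, and $\AAA \simeq \Spec(k)$ in $\calH(k)$, so the contractibility hypothesis of Corollary \ref{cor:stableSplittingContractible} is satisfied.

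Applying Corollary \ref{cor:stableSplittingContractible} to the family with $X_i = \AAA$ and $A_i = \Gm$ for all $i \in [m]$ yields immediately the first equivalence
\begin{equation*}
    \Sigma \zka \simeq \Sigma \bigvee_{I \not\in K} \B{K_I} \star \Gm^{\wedge |I|}.
\end{equation*}
For the second equivalence, I would use Lemma \cite[Lemma 3.5]{lavenir2023hiltonmilnors} (already invoked in the excerpt) to rewrite $\B{K_I} \star \Gm^{\wedge |I|} \simeq \Sigma \B{K_I} \wedge \Gm^{\wedge |I|}$, distribute the outer $\Sigma$ over the wedge, and combine the two suspensions to obtain $\Sigma^2 \B{K_I} \wedge \Gm^{\wedge |I|}$ for each summand.

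Finally I would identify the resulting smash product with a motivic sphere. Since $\Sigma$ in $\calH(k)$ is smashing with the simplicial circle $S^{1,0}$, and $\Gm = S^{1,1}$, the bidegree addition formula $S^{a,b} \wedge S^{a',b'} \simeq S^{a+a',b+b'}$ recalled in Section \ref{sec:MotivicHomotopy} gives
\begin{equation*}
    \Sigma^2 \B{K_I} \wedge \Gm^{\wedge |I|} \simeq S^{2,0} \wedge S^{|I|,|I|} \wedge \B{K_I} \simeq \B{K_I} \wedge S^{|I|+2,|I|},
\end{equation*}
which assembled over $I \not\in K$ yields the stated second equivalence. There is essentially no obstacle here beyond careful bookkeeping of the motivic bigrading; the substantive content has already been carried out in proving Corollary \ref{cor:stableSplittingContractible}, and the theorem serves to specialize that general statement to the motivic setting.
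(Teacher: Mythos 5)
Your proposal is correct and matches the paper's approach exactly: the paper states the theorem as a direct application of Corollary \ref{cor:stableSplittingContractible}, relying on the fact that $\Hk$ is cartesian closed with small colimits and that $\AAA$ is contractible, and you reach the second displayed equivalence by the same join-to-suspended-smash identification and bigrading bookkeeping the paper implicitly leaves to the reader.
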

		\begin{remark}
		    All of the results from Section \ref{sec:stableSplit} can be proven for $\calC = \calH(k)$ using Morel and Voevodsky's model structure and proving it locally on the value of simplicial presheaves using the results for topological spaces. 
		\end{remark}

\section{Affine models for motivic moment-angle complexes and toric varieties}
\label{sec:affine}
   In this section we will provide various models of $\zka$ in $\mathrm{Sm}_k$. We will also use affine models of $\zka$ to give affine models of smooth projective toric varieties. We begin with identifying $\zka$ with a smooth scheme.

\begin{proposition}
            \label{prop:monomialIdeal}
            Let $K$ be a simplicial complex, and let $\sigma_1, \ldots, \sigma_n$ be the maximal simplices of $K$. There is an equivalence 
            \begin{equation*}
                \zka \simeq \A^m \setminus L,
            \end{equation*} where $L$ is the variety cut out by the monomial ideal
            \begin{equation*}
               (\prod_{i\not \in \sigma_1} x_i, \ldots, \prod_{i\not \in \sigma_n} x_i) \subset k[x_1, \ldots, x_m].  
            \end{equation*}
        \end{proposition}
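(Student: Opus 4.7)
The plan is to first identify each building block $D(\sigma)$ of the colimit with an explicit open subscheme of $\A^m$. Since $\Gm = \AAA \setminus \{0\}$ and the polyhedral product is formed inside $\prod_{i=1}^m \AAA = \A^m$, we have
\begin{equation*}
D(\sigma) = \Spec k[x_1, \ldots, x_m]\!\left[x_i^{-1} \mid i \notin \sigma\right] = \A^m \setminus V\!\left(\prod_{i \notin \sigma} x_i\right),
\end{equation*}
with structure map $D(\sigma) \to D(\tau)$ for $\sigma \subseteq \tau$ being the evident open immersion inside $\A^m$.

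Next, as scheme-theoretic subsets of $\A^m$, I would check that $\bigcup_{\sigma \in K} D(\sigma) = \A^m \setminus L$. Since every face $\sigma \in K$ is contained in some maximal $\sigma_j$, we have $D(\sigma) \subseteq D(\sigma_j)$, so the union reduces to $\bigcup_{j=1}^n D(\sigma_j)$; its complement is $\bigcap_{j=1}^n V(\prod_{i \notin \sigma_j} x_i)$, which is precisely the vanishing locus $L$ of the monomial ideal in the statement.

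The substantive step is to promote this scheme-theoretic union to an equivalence $\colim_{\sigma \in \calK} D(\sigma) \simeq \A^m \setminus L$ in $\Hk$. The key combinatorial observation is that $D(\sigma) \cap D(\tau) = D(\sigma \cap \tau)$ inside $\A^m$, with $\sigma \cap \tau \in K$ since $K$ is closed under subsets. Thus $\{D(\sigma_j)\}_{j=1}^n$ is a Zariski open cover of $\A^m \setminus L$, and Nisnevich descent in $\Hk$ identifies $\A^m \setminus L$ with the colimit of the associated \v{C}ech nerve; equivalently, this is $\colim_\mathcal{I} D|_\mathcal{I}$, where $\mathcal{I} \subseteq \calK$ is the subposet of intersections of maximal faces.

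The remaining step is to check that the inclusion $\mathcal{I} \hookrightarrow \calK$ is cofinal for $\infty$-colimits. For any $\tau \in \calK$, the slice category $\mathcal{I}_{\tau/}$ consists of intersections $\bigcap_{j \in S} \sigma_j$ with $S$ ranging over non-empty subsets of $S_\tau := \{j \mid \tau \subseteq \sigma_j\}$; this poset has $\bigcap_{j \in S_\tau} \sigma_j$ as an initial object and so is weakly contractible. Consequently $\colim_\calK D \simeq \colim_\mathcal{I} D|_\mathcal{I} \simeq \A^m \setminus L$ in $\Hk$. The main obstacle in this plan is this last step — the cofinality verification combined with the careful invocation of Nisnevich descent for the \v{C}ech colimit — while the preceding identifications amount to straightforward combinatorial book-keeping.
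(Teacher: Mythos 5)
Your argument follows the paper's basic strategy — identify each $D(\sigma)$ with the basic open $\A^m \setminus V(\prod_{i\notin\sigma} x_i)$ and reduce the union to maximal faces — but is substantially more careful about the passage from the $\infty$-categorical colimit $\colim_{\sigma \in \calK} D(\sigma)$ to the scheme-theoretic union. The paper simply asserts that the colimit presentation ``gives a Zariski cover of an algebraic variety'' and then treats the union as the colimit without further comment; you justify this via Zariski descent in $\Hk$ together with a cofinality argument (each slice $\mathcal{I}_{\tau/}$ is weakly contractible since $\bigcap_{j \in S_\tau}\sigma_j$ is initial), resting on the key combinatorial observation $D(\sigma)\cap D(\tau)=D(\sigma\cap\tau)$ with $\sigma\cap\tau\in K$. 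This is exactly the detail the paper elides, and supplying it is a genuine improvement. One small point: you pass between the \v{C}ech colimit of the cover $\{D(\sigma_j)\}$ and $\colim_\mathcal{I} D|_\mathcal{I}$ with the word ``equivalently,'' but these colimits run over different posets — the \v{C}ech poset has a separate object for each nonempty $S\subseteq[n]$ while $\mathcal{I}$ collapses subsets with the same intersection — so you should add a sentence that the surjection from the \v{C}ech poset to $\mathcal{I}$ is itself cofinal, by the same contractible-slice check (each slice over $\rho\in\mathcal{I}$ has the full index set $S_\rho$ as initial object). Finally, your phrasing of the defining ideal of $L$ as the ideal generated by the $n$ monomials is the correct one: the paper's final display writes it as $\bigcap_j(\prod_{i\notin\sigma_j}x_i)$, which is the principal ideal generated by the lcm of the monomials and is in general strictly larger than what the proposition requires.
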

        \begin{proof}
            The colimit presentation of $\zka$ gives a Zariski cover of an algebraic variety. For each  simplex $\sigma \in K$, we have an equivalence $D(\sigma) = \A^m \setminus (\prod_{i \not \in \sigma} x_i)$. The algebraic variety $\zka$ can be described as the union
            \begin{equation*}
                \bigcup_{\sigma \in K} D(\sigma) = \bigcup_{\sigma \in K} \A^m \setminus (\prod_{i \not \in \sigma} x_i) = \A^m \setminus L.
            \end{equation*}
            We may enumerate the maximal simplices of $K$ as $\sigma_1, \ldots, \sigma_n$. The variety $L$ is cut out by the ideal 
            \begin{equation*}
               \bigcap_{1 \leq j \leq n}(\prod_{i\not \in \sigma_j} x_i) = (\prod_{i\not \in \sigma_1} x_i, \ldots, \prod_{i\not \in \sigma_n} x_i). \qedhere
            \end{equation*}
        \end{proof}
        \begin{remark}
        \label{rem:AlexanderDual}
            The ideal cutting out $L$ is the Stanley--Reisner ideal $I_{K^\vee}$ of Alexander dual $K^\vee$.
        \end{remark}   
        \begin{remark}
            From now on, whenever we speak about $\zka$ as a scheme, we will always mean $\A^m \setminus L$.
        \end{remark}
        The scheme $\A^m \setminus L$ can be identified with a toric variety generated by the following fan. Let $e_i$ denote the $i$th coordinate unit vector of $\R^m$. For each $\sigma \in K$, let~${C_\sigma:= \mathrm{Cone}(e_{i_1},\ldots, e_{i_n})}$ with $i_j \in \sigma$. The fan for $\zka$ is the collection of the cones~$C_\sigma$ for each $\sigma \in K$. Motivic moment-angle complexes have been studied before as toric varieties. In \cite{WendtFundamental} Wendt computes the $\AAA$-fundamental group of smooth toric varieties, this includes motivic-moment-angle complexes. 

        Our goal is to identify $\zka$ with an affine scheme. To do this we will need the following family of affine schemes. 
        \begin{definition}
             Let $L\subset \A^m$ be a closed subvariety cut out by the ideal $I = (f_1, \ldots, f_n)$. We define
            \begin{equation*}
                Q_I := \Spec\left(\frac{k[x_1,\ldots x_m,y_1,\ldots y_n]}{(f_1y_1 + \ldots +f_ny_n - 1)}\right)
            \end{equation*}
            and a morphism 
            \begin{equation*}
                \pi \colon Q_I \to \A^m \setminus L 
            \end{equation*}
            given by projection onto $(x_1, \ldots, x_m) \in  \A^m$.
        \end{definition}
        \begin{lemma}
        \label{lem:JouanolouComplement}
            Let $L\subset \A^m$ be a closed subvariety cut out by the ideal $I = (f_1, \ldots, f_n)$. The map $\pi \colon Q_I \to \A^m \setminus L $ is an $\AAA$-equivalence.
        \end{lemma}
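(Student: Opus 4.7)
The plan is to show that $\pi$ is a Zariski-locally trivial affine bundle with fiber $\A^{n-1}$; since $\A^1$-invariance of motivic localization forces such bundles to be $\A^1$-equivalences, the lemma follows. This is a mild generalization of Jouanolou's device (the case $L = \emptyset$ being trivial, and classical Jouanolou being essentially this construction for $\PP^m$).

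First I would verify that $\pi$ lands in $\A^m\setminus L$: on any point of $Q_I$ the relation $\sum_i f_i(x) y_i = 1$ forces at least one $f_i(x)$ to be nonzero, so $x \not\in L$. In particular, $\pi$ is set-theoretically surjective as well, since given any $x \in \A^m\setminus L$ one can choose $y_i = 0$ for $f_i(x) = 0$ and rescale the remaining $y_j$'s to satisfy the relation.

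Next I would produce an explicit Zariski trivialization. For each $i \in \{1,\dots,n\}$ let $U_i \subset \A^m\setminus L$ be the open subscheme where $f_i$ is invertible. The $U_i$ cover $\A^m\setminus L$, because a point of $L$ is precisely one on which every $f_i$ vanishes. Over $U_i$, the equation $f_1 y_1 + \dots + f_n y_n = 1$ can be solved uniquely for $y_i$ in terms of the remaining $y_j$'s:
\begin{equation*}
    y_i = f_i^{-1}\Bigl(1 - \sum_{j \neq i} f_j y_j\Bigr).
\end{equation*}
Consequently the map $\pi^{-1}(U_i) \to U_i \times \A^{n-1}_{(y_j)_{j\neq i}}$ given by projecting off $y_i$ is an isomorphism of schemes, with inverse supplied by the displayed formula. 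Thus $\pi$ is a Zariski-locally trivial $\A^{n-1}$-bundle.

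Finally I would conclude using the standard fact that every Zariski-locally trivial $\A^k$-bundle is an $\A^1$-equivalence in $\Hk$: this follows because such bundles are (Nisnevich-locally) products with $\A^k$, which is $\A^1$-contractible, and $\A^1$-localization preserves Nisnevich-local equivalences built from $\A^1$-contractible fibers (see, e.g., Morel--Voevodsky \cite{MorelVoevodsky99}). There is no genuine obstacle here; the only thing to be careful about is the choice of the cover $\{U_i\}$, and that the explicit trivialization above is indeed an isomorphism of schemes rather than merely a bijection on points, which is immediate from the presentation of $Q_I$ by the relation $\sum f_i y_i = 1$.
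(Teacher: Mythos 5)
Your proof is correct and follows essentially the same route as the paper: cover $\A^m\setminus L$ by the opens $U_i=\{f_i\neq 0\}$, observe that over each $U_i$ the relation $\sum f_j y_j = 1$ can be solved for $y_i$, giving $\pi^{-1}(U_i)\cong U_i\times\A^{n-1}$, and then invoke that Zariski-locally trivial affine bundles are $\A^1$-equivalences. Your writeup is slightly more explicit about the trivialization formula, but the argument is the same.
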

        \begin{proof}
            The variety $\A^m \setminus L$ is covered by the opens $U_i$, where $f_i \neq 0$. Thus, locally for each $U_i$,  \begin{equation*}
                U_i \cong \Spec(k[x_1, \ldots, x_m][f_i^{-1}]).
            \end{equation*}
           Computing the preimage of $U_i$ yields
           \begin{equation*}
               \pi^{-1}(U_i) \cong  \Spec\left(\frac{k[x_1,\ldots x_m,y_1,\ldots y_n][f_i^{-1}]}{(f_1y_1 + \ldots +f_ny_n - 1)}\right).
           \end{equation*}
           We see that $\pi$ trivializes, and we get $\pi^{-1}(U_i) \cong U_i \times \A^{n-1}$. The opens $\pi^{-1}(U_i)$ cover~$Q_I$ as well since the ideal $I$ is a unit ideal in the coordinate ring of $Q_I$. The fibers of $\pi$ are trivial and the morphism is smooth, which implies that $Q_I$ is a Zariski locally trivial bundle over $\A^m \setminus L$ and $\pi$ is an $\AAA$-equivalence.  
        \end{proof}
         Proposition \ref{prop:monomialIdeal} shows that $\zka$ is homotopy equivalent to $\A^m\setminus L$ where~$L$ is some intersection of coordinate hyperplanes. As noted in Remark \ref{rem:AlexanderDual}, the variety~$L$ is cut out by a monomial ideal $I_{K^\vee}=(f_1, \ldots , f_n) \subset k[x_1, \ldots, x_m]$. 
         \begin{corollary}
         \label{cor:zkaaffinedual}
             Let $K$ be a simplicial complex. Then there is an $\AAA$-equivalence 
             \begin{equation*}
                 \zka \simeq Q_{I_{K^\vee}}.
             \end{equation*}
         \end{corollary}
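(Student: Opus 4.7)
The plan is to simply chain together the three preceding results, since the corollary is essentially an immediate consequence of them. First I would invoke Proposition \ref{prop:monomialIdeal} to replace $\zka$ with the quasi-affine scheme $\A^m \setminus L$, where $L$ is the closed subvariety of $\A^m$ cut out by the monomial ideal generated by the products $\prod_{i \not\in \sigma_j} x_i$ over the maximal simplices $\sigma_j$ of $K$.

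Next I would use Remark \ref{rem:AlexanderDual} to identify the defining ideal of $L$ with the Stanley--Reisner ideal $I_{K^\vee}$ of the Alexander dual. Concretely, the generators $\prod_{i \not\in \sigma_j} x_i$ correspond to the square-free monomials indexed by the complements of maximal faces of $K$, which by the definition of Alexander dual are precisely the minimal non-faces of $K^\vee$. Hence $L$ is the closed subvariety cut out by the ideal $I_{K^\vee}$ in the sense of Definition \ref{def:StanleyReisnerRing}.

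Finally, I would apply Lemma \ref{lem:JouanolouComplement} to the ideal $I = I_{K^\vee}$ with any chosen set of monomial generators $(f_1, \ldots, f_n)$. The lemma produces an $\AAA$-equivalence $\pi \colon Q_{I_{K^\vee}} \to \A^m \setminus L$, and composing with the equivalence from Proposition \ref{prop:monomialIdeal} yields the desired $\AAA$-equivalence $\zka \simeq Q_{I_{K^\vee}}$.

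Since all three ingredients are already proven, there is essentially no obstacle here; the only mild subtlety worth flagging is that $Q_I$ as defined depends on a choice of generators of $I$, so one should either remark that different choices give $\AAA$-equivalent schemes (which follows from Lemma \ref{lem:JouanolouComplement} applied twice, comparing each to $\A^m \setminus L$), or simply fix the canonical square-free monomial generators of $I_{K^\vee}$ to make the statement well-defined.
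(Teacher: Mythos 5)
Your proposal is correct and matches the paper's (implicit) argument exactly: the corollary is stated immediately after the paper itself notes that Proposition~\ref{prop:monomialIdeal} and Remark~\ref{rem:AlexanderDual} identify $\zka$ with $\A^m \setminus L$ where $L = V(I_{K^\vee})$, and Lemma~\ref{lem:JouanolouComplement} then supplies the $\AAA$-equivalence $Q_{I_{K^\vee}} \simeq \A^m \setminus L$. Your closing remark about the dependence of $Q_I$ on the choice of generators is a reasonable and correct clarification, though the paper treats it as understood.
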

         We will also give a different affine model of $\zka$ that uses the Stanley--Reisner ideal~$I_K=(f_1, \ldots, f_n)$. For simplicity, we will write $i\in f_j$ when $x_i$ is a factor of~$f_j$. 
         \begin{proposition}
         \label{prop:zkaaffine}
             Let $K$ be a simplicial complex on the vertex set $[m]$ and~${I_K=(f_1, \ldots, f_n)}$ its Stanley--Reisner ideal. There is an equivalence 
             \begin{equation*}
                \zka \simeq \Spec\left( \frac{k[x_1,\ldots,x_m,y_{ij}]}{(\sum_{i\in f_1}x_iy_{i1}-1,\ldots,\sum_{i\in f_n}x_iy_{il}-1)}\right).
             \end{equation*}
         \end{proposition}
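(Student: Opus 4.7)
The plan is to show that the scheme $V$ on the right-hand side admits a natural projection $\pi \colon V \to \zka$ onto the $x$-coordinates that is a Zariski-locally trivial affine-space bundle, and hence an $\AAA$-equivalence by the same argument as in Lemma \ref{lem:JouanolouComplement}. The proof will proceed in three steps, reusing Proposition \ref{prop:monomialIdeal} to identify $\zka$ with $\A^m \setminus L$.

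First, I would verify that the projection onto the $x$-coordinates lands inside $\zka \simeq \A^m \setminus V(I_{K^\vee})$. A point $(x,y) \in V$ satisfies $\sum_{i \in f_j} x_i y_{ij} = 1$ for every minimal non-face $\mu_j$ of $K$ (writing $f_j = \prod_{i \in \mu_j} x_i$), so the ideal generated by $\{x_i : i \in \mu_j\}$ is the unit ideal at each geometric point of $V$. Equivalently, the subset $\tau := \{i : x_i = 0\}$ contains no minimal non-face of $K$, hence $\tau \in K$, so $x \in D(\tau) \subset \zka$.

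Second, I would establish local triviality of $\pi$ over the Zariski cover $\{D(\sigma)\}_{\sigma \in K}$ of $\zka$. Fix $\sigma \in K$; for each minimal non-face $\mu_j$ we have $\mu_j \not\subset \sigma$, so we may pick an index $i(j) \in \mu_j \setminus \sigma$, and then $x_{i(j)}$ is invertible on $D(\sigma)$. The relation $\sum_{i \in \mu_j} x_i y_{ij} = 1$ then solves uniquely for $y_{i(j),j}$ as a regular function of the remaining coordinates, yielding an isomorphism $\pi^{-1}(D(\sigma)) \cong D(\sigma) \times \A^{N - n}$, where $N = \sum_j |\mu_j|$ is the total number of $y$-variables and $n$ is the number of minimal generators of $I_K$. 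As $\sigma$ ranges over $K$, the first step guarantees that $\{\pi^{-1}(D(\sigma))\}$ covers $V$.

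Finally, the local triviality exhibits $\pi$ as a Zariski-locally trivial bundle with affine-space fibers, hence an $\AAA$-equivalence exactly as in the proof of Lemma \ref{lem:JouanolouComplement}. The main subtlety is bookkeeping rather than a genuine technical obstacle: one must keep straight the distinction between the Stanley--Reisner ideal $I_K$, whose generators parameterize the equations and blocks of $y$-variables cutting out $V$, and the Alexander-dual ideal $I_{K^\vee}$, whose vanishing locus is the actual complement $L$ removed in the target $\zka$. Once the roles of these two ideals are clearly separated, the local trivialization on each $D(\sigma)$ is a direct calculation and the conclusion is immediate.
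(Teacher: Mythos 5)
Your proposal is correct and follows essentially the same approach as the paper: the paper's proof is a two-line sketch that says to cover $\A^m\setminus L$ by the opens $D(\sigma)$ for $\sigma$ maximal in $K$ and then run the Jouanolou-style local-triviality argument of Lemma~\ref{lem:JouanolouComplement}, which is exactly what you spell out. Your remark distinguishing the roles of $I_K$ (supplying the equations and $y$-blocks of the affine model) from $I_{K^\vee}$ (cutting out the removed locus $L$) is the key bookkeeping point that makes the sketch work, and you have it right.
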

         \begin{proof}
             One can cover $\A^m \setminus L$ by the affine opens corresponding to the maximal simplices of $K$, i.e.\ $D(\sigma)$ (as in Definition \ref{def:oopolyhedralprod}) where $\sigma$ is maximal in $K$. Using the same strategy as the proof as Lemma \ref{lem:JouanolouComplement} with this open cover yields the result.
         \end{proof}

         \begin{remark}
             For any $n>0$, the two affine representatives for $\zka$ from Corollary \ref{cor:zkaaffinedual} and Proposition \ref{prop:zkaaffine} are equal when $K = \partial \Delta^n$.
         \end{remark}
  In some special cases, it is possible to give an affine model of $\Sigma \zka$. In the special case where $K=\partial \Delta^{n-1}$ we have $Z_{\partial \Delta^{n-1}}^{\AAA}\simeq S^{2n-1,n}$, and there is an equivalence due to Asok, Doran, and Fasel \cite[Theorem 2.2.5]{AsokDoranFasel} 
           \begin{equation*}
               \Sigma Z_{\partial \Delta^{n-1}}^{\AAA} \simeq S^{2n,n} \simeq \Spec\left( \frac{k[x_1,\ldots, x_n, y_1, \ldots, y_n, z]}{(x_1y_1 + \ldots x_ny_n - z(1-z))}\right).
           \end{equation*}
However, we are able to say more about $\Sigma_{\PP^1}\zka$. By \cite[Corollary 4.16]{algebraicSuspension}, the motivic space~$\Sigma_{\PP^1}\zka$ admits an affine model because $\zka$ has the homotopy type of an affine scheme. The following remark shows how the $\PP^1$-suspension of $\zka$ is a motivic moment-angle complex.
\begin{remark}
    Let $K$ and $\Delta^{m-1}$ be a simplicial complex on the vertex set $[m]$. We define~$M=K \star \{m+1\}$ on the vertex set $[m+1]$ Consider the pushout of simplicial complexes 
    \begin{equation*}
        \begin{tikzcd}
        \overline{K} \arrow[dr, phantom, "\scalebox{1}{$\ulcorner$}" , very near end, color=black]\ar[r]\ar[d] & \overline{M}  \ar[d] \\
    \overline{\Delta^{m-1}} \ar[r] & K'.
    \end{tikzcd}
    \end{equation*}
    By Proposition \ref{prop:PushoutPolyProd}, this induces a pushout of motivic moment-angle complexes 
    \begin{equation*}
        \begin{tikzcd}
        Z_{\overline{K}}^{\AAA} \arrow[dr, phantom, "\scalebox{1}{$\ulcorner$}" , very near end, color=black]\ar[r]\ar[d] & Z_{\overline{M}}^{\AAA}  \ar[d] \\
        Z_{\overline{\Delta^{m-1}}}^{\AAA} \ar[r] & Z_{K'}^{\AAA}.
    \end{tikzcd}
    \end{equation*}
    We can express the motivic moment-angle complexes in the diagram in terms of $\zka$, and this gives
    \begin{equation*}
        \begin{tikzcd}
        Z_{K}^{\AAA}\times \Gm \arrow[dr, phantom, "\scalebox{1}{$\ulcorner$}" , very near end, color=black]\ar[r]\ar[d] & Z_{K}^{\AAA} \times \AAA \ar[d] \\
        \A^m \times \Gm \ar[r] & Z_{K'}^{\AAA}.
    \end{tikzcd}
    \end{equation*}
    Since $\AAA$ is contractible, we get an equivalence
    \begin{equation*}
        Z_{K'}^{\AAA} \simeq \zka \star \Gm \simeq \Sigma_{\PP^1}\zka.
    \end{equation*}
    Thus $\Sigma_{\PP^1}\zka$ is a motivic moment-angle complex and has the homotopy type of a smooth affine scheme. 
\end{remark}
	Recall that any smooth projective toric variety $X$ can be realized as a quotient of a motivic-moment-angle complex under the action of a torus \cite{CoxHomogCoord}. The torus action induces an action on $Q_K$. By computing the ring of invariants of the coordinate ring of $Q_K$ under the torus action, we can find an explicit smooth affine description of  $X$. Concretely, if~$X = \zka / T$ is a smooth projective toric variety for some simplicial complex~$K$ and torus~$T$, then $Q_K / T$ is an affine bundle over $X$ with trivial fibers. 

\begin{example}
    \label{ex:p1affine} Let $K$ be two disjoint points, then $\zka \simeq \A^2 \setminus 0$. We make $\Gm$ act on~$\A^2 \setminus 0$ by scalar multiplication. That is, for $\lambda \in \Gm$, we define $\lambda \cdot (x_1,x_2) = (\lambda x_1, \lambda x_2)$. By Corollary \ref{cor:zkaaffinedual}, there is an equivalence 
    \begin{equation*}
        \zka \simeq \A^2 \setminus 0 \simeq \Spec\left( \frac{k[x_1,x_2,f_1,f_2]}{(x_1f_1+x_2f_2-1)}\right) = \mathrm{SL}_2.
    \end{equation*}
    The action of $\Gm$ on $\A^2 \setminus 0$ extends to an action on $\mathrm{SL}_2$ as follows 
    \begin{equation*}
        \lambda \cdot (x_1,x_2,f_1,f_2) = (\lambda x_1, \lambda x_2, \frac{f_1}{\lambda}, \frac{f_2}{\lambda}).
    \end{equation*}
    Computing the ring of invariants of yields the generators $x_1f_1,x_2f_2,x_1f_2$, and $x_2f_1$. There are also relations $x_1f_1+x_2f_2=1$ and $x_1f_1 \cdot x_2f_2 = x_1f_2 \cdot x_2f_1$. Thus the ring of invariants is isomorphic to the ring 
    \begin{equation*}
        R = \frac{k[a,b,c,d]}{(a+d-1,ad-bc)}.
    \end{equation*}
    Thus $\PP^1 \simeq \Spec(R)$. One can view $R$ as a $(2\times 2)$-matrix with trace $1$ and determinant~$0$. This is equivalent to a rank $1$ idempotent matrix. Similar techniques can be applied to identify $\PP^{n-1}$ with an idempotent $(n \times n)$ matrices of rank $1$.
\end{example}
Let $K$ be a square, then $\zka \simeq \A^2 \setminus 0 \times \A^2 \setminus 0$. We can consider a $2$-dimensional torus acting on $\zka$ in the following way. Let $\lambda = (\lambda_1, \lambda_2) \in \Gm^{\times 2}$, we let $\Gm^{\times 2}$ act on~$\zka$ in the following way 
      \begin{equation*}
          \lambda \cdot (x_1,x_2,x_3,x_4) = (\lambda_1 x_1, \lambda_1 x_2, \lambda_2 x_3, \lambda_2 x_4). 
      \end{equation*}
      The GIT quotient of the action is $\PP^1 \times \PP^1$. The following two examples show how the two different affine models for $\zka$ yield different affine models for a toric variety. 
		\begin{example}
		By Corollary \ref{cor:zkaaffinedual}, there is an equivalence $\zka \simeq Q_{I_{K^\vee}}$. In the case where $K$ is a square, this yields 
  \begin{equation*}
      \zka \simeq \Spec\left(\frac{k[x_1,x_2,x_3,x_4,f_{13},f_{14},f_{23},f_{24}]}{(x_1x_3f_{13}+ x_1x_4f_{14} + x_2x_3f_{23} + x_2x_4f_{24}-1)} \right).
  \end{equation*}
  The action of $\Gm^{\times 2}$ on $\zka$ by $\Gm^{\times 2}$ as previously described extends to an action on $Q_{I_{K^\vee}}$ as follows
  \begin{equation*}
          \lambda \cdot (x_1,x_2,x_3,x_4,f_{13},f_{14},f_{23},f_{24}) = (\lambda_1 x_1, \lambda_1 x_2, \lambda_2 x_3, \lambda_2 x_4,\frac{f_{13}}{\lambda_1\lambda_2},\frac{f_{14}}{\lambda_1\lambda_2},\frac{f_{23}}{\lambda_1\lambda_2},\frac{f_{24}}{\lambda_1\lambda_2}). 
      \end{equation*}
     The $16$ generators of the ring of invariants under the action of the torus are $x_ix_jf_{pq}$ for~$i=1,2$, $j=3,4$, $p=1,2$, and $q=3,4$. 
 The ring of invariants has the relation \begin{equation*}
     x_1x_3f_{13}+ x_1x_4f_{14} + x_2x_3f_{23} + x_2x_4f_{24}=1
 \end{equation*} as well as \begin{equation*}
     x_ix_jf_{pq}\cdot x_{i'}x_{j'}f_{p'q'}= x_{i}x_{j}f_{p'q'} \cdot x_{i'}x_{j'}f_{pq}.
 \end{equation*}
 The variety $Q_{I_{K^\vee}}/T$ can be seen as the pullback of the Segre embedding of $\PP^1  \times \PP^1$ in~$\PP^3$ and the morphism $\pi \colon \Tilde{\PP}^3 \to \PP^1$, where $\Tilde{\PP}^3$ is the affine replacement for $\PP^3$.

		\end{example}
  
 	\begin{example}
 	    When $K$ is a square, there is an equivalence by Proposition \ref{prop:zkaaffine} 
      \begin{equation*}
            \zka \simeq \Spec\left( \frac{k[x_1,x_2,x_3,x_4,f_1,f_2,f_3,f_4]}{(x_1f_1+x_2f_2-1, x_3f_3+x_4f_4-1)}\right).
      \end{equation*}   
      In this case, the action of $\Gm^{\times 2}$ on $\zka$ by $\Gm^{\times 2}$ extends as follows
      \begin{equation*}
          \lambda \cdot (x_1,x_2,x_3,x_4,f_{13},f_{14},f_{23},f_{24}) = (\lambda_1 x_1, \lambda_1 x_2, \lambda_2 x_3, \lambda_2 x_4,\frac{f_{1}}{\lambda_1},\frac{f_{2}}{\lambda_1},\frac{f_{3}}{\lambda_2},\frac{f_{4}}{\lambda_2}). 
      \end{equation*}
      The eight generators of the ring of invariants are
      \begin{equation*}
          x_if_j, x_pf_q \quad \text{for} \quad \text{$i=1,2$, $j=1,2$, $p=3,4$, $q=3,4.$}
      \end{equation*}We have relations $x_1f_1+x_2f_2=1$ and $x_3f_3+x_4f_4=1$, as well as~${x_if_i\cdot x_jf_j = x_if_j \cdot x_jf_i}$ for $(i,j)=(1,2)$ and $(i,j) = (3,4)$. The ring of invariants is isomorphic to the tensor product of two copies of the coordinate ring of the affine replacement of $\PP^1$ from Example~\ref{ex:p1affine}.
 	\end{example}		
		\section{Invariants of motivic polyhedral products}
\label{sec:motivicInvariants}
		In this section we will consider various invariants for objects in the motivic homotopy category and apply them to motivic moment-angle complexes.
		
		\subsection{Cellular $\A^1$-homology}
 
 		In \cite{morel2023cellular} Morel and Sawant define cellular $\AAA$-homology for cellular varieties. Cellular varieties are smooth schemes that admits a nice stratification. The cellular $\AAA$-homology takes values in the derived category of strictly $\AAA$-invariant Nisnevich sheaves of abelian groups on $\smk$, which we will denote by $D(Ab_{\mathbb{A}^1}(k))$. Morel and Sawant define a scheme~$X$ to be \emph{cohomologically trivial} if $\mathbf{H}_n^{\mathrm{Nis}}(X,\mathbf{M})=0$, for every $n \geq 1$ and~$\mathbf{M}\in Ab_{\AAA}(k)$ \cite[Definition 2.9]{morel2023cellular}. Examples of cohomologically trivial schemes are~$\AAA, \Gm$, and products of cohomologically trivial schemes \cite[Remark 2.10]{morel2023cellular}. 
  \begin{definition}[{\cite[Definition 2.11]{morel2023cellular}}]
      Let $X \in \smk$ be a smooth $k$-scheme. A \emph{cellular structure} on $X$ consists of an increasing filtration 
      \begin{equation*}
          \emptyset = \Omega_{-1} \subsetneq \Omega_0 \subsetneq \Omega_1 \subsetneq  \ldots \subsetneq \Omega_n = X
      \end{equation*}
      by open subschemes of $X$ such that for each $0 \leq i \leq n$, the reduced induced subscheme~$X_i := \Omega_i \setminus \Omega_{i-1}$ of $\Omega_i$ is smooth, affine, everywhere of codimension $i$, and cohomologically trivial. We say that $X$ is a \emph{cellular scheme} if $X$ admits a cellular structure.
  \end{definition}
  This definition is meant to imitate the $CW$-structure of a topological space. See \cite[Remark 2.12.(2)]{morel2023cellular} for further details. 
  \begin{proposition}
  \label{prop:cellularStructure}
      Let $K$ be a simplicial complex, and let $K_i$ denote the $i$-skeleton of $K$. Let $s$ be the smallest integer such that $K_s = K$. Then \begin{equation*}
          \emptyset \subset \Gm^{\times m} \subsetneq Z_{K_0}^{\AAA} \subsetneq  Z_{K_1}^{\AAA} \subsetneq  \ldots \subsetneq Z_{K_s}^{\AAA} = \zka
      \end{equation*}
      is a cellular structure on $\zka$.
  \end{proposition}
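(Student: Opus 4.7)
The plan is to use the affine description of $\zka$ from Proposition~\ref{prop:monomialIdeal} and compute each stratum of the filtration explicitly in coordinates. Writing points of $\A^m$ as $x = (x_1, \ldots, x_m)$ and setting $\sigma(x) := \{i : x_i = 0\}$, the definitions $\zka = \bigcup_{\sigma \in K} D(\sigma)$ and $D(\sigma) = \{x \in \A^m : x_i \neq 0 \text{ for } i \notin \sigma\}$ give
\begin{equation*}
Z_{K'}^{\AAA} = \{x \in \A^m : \sigma(x) \in K'\}
\end{equation*}
for every subcomplex $K' \subseteq K$. In particular, each $Z_{K_i}^{\AAA}$ is an open subscheme of $\A^m$, so the filtration is a chain of open subschemes, and each inclusion $Z_{K_{i-1}}^{\AAA} \subsetneq Z_{K_i}^{\AAA}$ is strict because $K_i \setminus K_{i-1}$ contains at least one $i$-face for $0 \leq i \leq s$ (and $\Gm^{\times m} = D(\emptyset) \subsetneq Z_{K_0}^{\AAA}$ whenever $K$ has a vertex).

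Next I would identify the reduced complements. Relabel the filtration as $\Omega_{-1} = \emptyset$, $\Omega_0 = \Gm^{\times m}$, and $\Omega_i = Z_{K_{i-1}}^{\AAA}$ for $1 \leq i \leq s+1$. Then $X_0 = \Gm^{\times m}$, and for $1 \leq i \leq s+1$
\begin{equation*}
X_i = \Omega_i \setminus \Omega_{i-1} = \bigsqcup_{\substack{\sigma \in K \\ \dim \sigma = i-1}} \{x \in \A^m : \sigma(x) = \sigma\}.
\end{equation*}
For each $(i-1)$-face $\sigma$ (having $i$ vertices), the component $\{x : \sigma(x) = \sigma\}$ is cut out by $x_j = 0$ for $j \in \sigma$ and $x_j \neq 0$ for $j \notin \sigma$, hence is isomorphic to $\Gm^{\times (m-i)}$. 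Thus each $X_i$ is a finite disjoint union of copies of $\Gm^{\times (m-i)}$.

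It then remains to verify the four conditions of a cellular structure. Smoothness and affineness are immediate, since each component $\Gm^{\times (m-i)}$ is smooth affine and a finite disjoint union of smooth affine schemes is smooth affine. The codimension of $X_i$ in $\zka$ equals $m - (m-i) = i$, since $\zka$ is open in $\A^m$ and each component of $X_i$ has dimension $m - i$. Cohomological triviality of $\Gm^{\times (m-i)}$ follows from the fact recalled in Remark~2.10 of \cite{morel2023cellular} that $\Gm$ and products of cohomologically trivial schemes are cohomologically trivial; for a finite disjoint union the Nisnevich cohomology with coefficients in any $\mathbf{M} \in Ab_{\AAA}(k)$ is the product of the cohomologies of the components, so it also vanishes in positive degrees.

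The only mild subtlety is bookkeeping of the indexing, to confirm that the stratum parametrized by $(i-1)$-faces sits in codimension $i$; once the coordinate description of $Z_{K'}^{\AAA}$ as an open subscheme of $\A^m$ is in hand, no further machinery is required.
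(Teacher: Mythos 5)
Your proof is correct and follows essentially the same approach as the paper: both identify the stratum $X_i$ as a finite disjoint union of copies of $\Gm^{\times(m-i)}$ indexed by the $(i-1)$-faces of $K$ and then invoke cohomological triviality of products of $\Gm$'s. Your version is more explicit --- in particular it keeps the indexing straight and spells out all four conditions of a cellular structure --- whereas the paper's proof is terse and its displayed formula for $X_{i+1}$ appears to be shifted by one.
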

  \begin{proof}
  By Proposition \ref{prop:monomialIdeal}, for each $0\leq i \leq s$ the variety $Z^{\AAA}_{K_i}$ is the open complement of the variety $V(I_{K^\vee_i})$ cut out by a monomial ideal $I_{K^\vee_i}$ in $\A^m$. Thus,
  \begin{equation*}
       X_{i+1} = Z^{\AAA}_{K_i} \setminus Z^{\AAA}_{K_{i-1}} = V(I_{K^\vee_i}) \setminus V(I_{K^\vee_{i-1}}) = \bigsqcup_{\sigma \in K, |\sigma|=i}\Gm^{\times (m-i)}.
  \end{equation*}
 Since $\Gm$ is cohomologically trivial, $\zka$ admits a cellular structure.
  \end{proof}
  Using the cellular structure of cellular varieties, Morel and Sawant define an $\AAA$-chain complex. From the cellular structure on $\zka$, one can create a cellular $\AAA$-chain complex in the fashion of Morel and Sawant. However, if we want to exploit the homotopical properties of polyhedral products, we are going to need a different cellular $\AAA$-chain complex. This is no problem since \cite[Corollary 2.42]{morel2023cellular} shows that any two cellular~{$\AAA$-chain} complexes of a $\zka$ will be homotopy equivalent in $D(Ab_{\AAA}(k))$. We will now show that the functor $C_*^{\mathrm{cell}}({-})$ sends motivic moment-angle complexes to a polyhedral product in $D(Ab_{\AAA}(k))$.

  \begin{proposition}
            The functor $C_*^{\mathrm{cell}}({-})$ preserves colimits of cohomologically trivial objects.
        \end{proposition}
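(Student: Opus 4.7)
The strategy is to reduce the claim to a known colimit-preserving functor. Recall that for any motivic space $\mathcal{F}$ one has the singular $\AAA$-chain complex $C_*^{\AAA}(\mathcal{F})$, regarded as an object of $D(Ab_{\AAA}(k))$. This construction is a left adjoint (it factors as the Yoneda embedding $\smk \hookrightarrow \Spck$, followed by the free $\AAA$-invariant linearization $\Spck \to D(Ab_{\AAA}(k))$, each step being a left adjoint), and therefore preserves all small colimits.

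The plan is then to show that for a cohomologically trivial smooth scheme $Y$, the cellular chain complex $C_*^{\mathrm{cell}}(Y)$ is canonically equivalent to $C_*^{\AAA}(Y)$ in $D(Ab_{\AAA}(k))$. Granting this, the restriction of $C_*^{\mathrm{cell}}$ to the full subcategory of cohomologically trivial objects factors up to natural equivalence through the colimit-preserving functor $C_*^{\AAA}$, and so preserves colimits of such diagrams. By \cite[Corollary 2.42]{morel2023cellular}, the cellular chain complex is independent of the chosen cellular structure up to canonical quasi-isomorphism, so it suffices to exhibit one cellular model whose class in $D(Ab_{\AAA}(k))$ matches $C_*^{\AAA}(Y)$.

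To establish the comparison, one uses the defining property of cohomological triviality: by definition $\mathbf{H}^n_{\mathrm{Nis}}(Y,\mathbf{M}) = 0$ for all $n \geq 1$ and all strictly $\AAA$-invariant sheaves $\mathbf{M}$, which dually forces the higher $\AAA$-homology sheaves $\mathbf{H}_i^{\AAA}(Y)$ to vanish for $i>0$. Both $C_*^{\mathrm{cell}}(Y)$ and $C_*^{\AAA}(Y)$ therefore have homology concentrated in degree $0$, where they both represent the free strictly $\AAA$-invariant sheaf generated by $Y$. Since an object of $D(Ab_{\AAA}(k))$ concentrated in degree $0$ is determined up to quasi-isomorphism by its $\mathbf{H}_0$, a canonical comparison map between the two complexes is a quasi-isomorphism.

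The main obstacle is the degree-zero identification: one must check that the canonical map from $C_*^{\mathrm{cell}}(Y)$ to $C_*^{\AAA}(Y)$ induces an isomorphism on $\mathbf{H}_0$. This amounts to unwinding the Morel--Sawant construction of the cellular differentials at the bottom of the filtration and comparing with the Suslin-type chain complex model of singular $\AAA$-homology. For the intended applications $Y = \AAA^{\times a} \times \Gm^{\times b}$, one can alternatively compute both sides by hand and compare directly, which bypasses the general comparison.
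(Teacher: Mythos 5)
Your approach diverges from the paper's and has a genuine gap in its final step. The paper's proof invokes the pro-left adjoint structure of $C_*^{\mathrm{cell}}$ from Morel--Sawant (\cite[Corollary 2.38, Remark 2.39]{morel2023cellular}): as a functor into $\text{pro-}D(Ab_{\AAA}(k))$ it preserves colimits, and on cohomologically trivial objects its values are constant pro-objects, so the colimit can be read off in $D(Ab_{\AAA}(k))$. You instead attempt to reduce to colimit-preservation of the singular $\AAA$-chain functor $C_*^{\AAA}$ by exhibiting a comparison $C_*^{\mathrm{cell}}(Y) \simeq C_*^{\AAA}(Y)$ for cohomologically trivial $Y$. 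Even granting that comparison, your argument yields $\colim C_*^{\mathrm{cell}}(Y_i) \simeq \colim C_*^{\AAA}(Y_i) \simeq C_*^{\AAA}(\colim Y_i)$, but the proposition asserts that this equals $C_*^{\mathrm{cell}}(\colim Y_i)$. The colimit object (e.g.\ $\zka$) is in general \emph{not} cohomologically trivial, so your comparison does not apply to it; closing that last link amounts to proving that cellular $\AAA$-homology agrees with singular $\AAA$-homology on the colimit, which is precisely the Morel--Sawant comparison question and is not available for free. This is the gap the paper's pro-left adjoint argument is designed to avoid.

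Two secondary issues. First, the claimed factorization of $C_*^{\AAA}$ through the Yoneda embedding $\smk \hookrightarrow \Spck$ ``each step being a left adjoint'' is incorrect: the Yoneda embedding is not a left adjoint and does not preserve colimits of schemes. What is true, and what you actually need, is that the linearization $\calH(k) \to D_{\AAA}(Ab(k))$ is a left adjoint, where the colimit is taken in $\calH(k)$. Second, the assertion that cohomological triviality ``dually forces'' vanishing of the positive-degree singular $\AAA$-homology sheaves is not automatic; cohomological triviality is a condition on Nisnevich cohomology with coefficients in strictly $\AAA$-invariant sheaves, and deducing concentration of $\AAA$-homology in degree zero requires an argument (for $\AAA^a \times \Gm^b$ one can compute directly, but you would need the general statement to make the reduction uniform).
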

        \begin{proof}
            The functor $C_*^{\mathrm{cell}}({-})$ is a pro-left adjoint to the category $\text{pro-}D(Ab_{\AAA}(k))$. When the objects are cohomologically trivial, their image in $\text{pro-}D(Ab_{\AAA}(k))$ are constant, and can be represented as elements of $D(Ab_{\AAA}(k))$. See \cite[Corollary 2.38 and Remark 2.39]{morel2023cellular} for further details.
        \end{proof}
        By \cite[Lemma 2.31]{morel2023cellular}, for $X,Y$ smooth cellular schemes, there is an isomorphism of chain complexes 
  \begin{equation*}
      C_*^{\mathrm{cell}}(X \times Y) \cong C^{\mathrm{cell}}_*(X)\otimes C_*^{\mathrm{cell}}(Y).
  \end{equation*}
We say that a chain complex $C_*$ of strictly $\AAA$-invariant sheaves is \emph{pointed}, if $C_0$ admits~$\Z$ as a direct summand. We denote the reduced chain complex of $C$ by $\widetilde{C}$ and there are isomorphisms $C_0 \cong \Z \oplus \widetilde{C}_0$ and $C_i \cong \widetilde{C}_i$ for $i>0$.    In the case where $C_*$ and $D_*$ are two pointed chain complexes concentrated in degree $0$, we get the following splitting 
  \begin{equation*}
      C_*\otimes D_* \simeq \Z \oplus \widetilde {C}_0 \oplus \widetilde{D}_0 \oplus  (\widetilde {C}_0 \otimes \widetilde{D}_0).
  \end{equation*}
   When $X$ is a pointed space that admits a cellular structure then $C^{\mathrm{cell}}_*(X)$ is a pointed chain complex. Because motivic moment-angle complexes are built out of products of $\AAA$'s and $\Gm$'s, it is important to understand the cellular structure of those pieces. Denote the $n$-th unramified Milnor--Witt $K$-theory sheaf by $\kmw_n$ (see \cite[§3.2]{Morel1} for a definition). Going forward, we will need the following property of the Milnor--Witt~{$K$-theory} sheaves. For $i,j \geq 0$ there is an isomorphism $\kmw_i \otimes \kmw_j \cong \kmw_{i+j}$. The following result is due to Morel and Sawant. 
		
		\begin{proposition}
			The cellular $\AAA$-chain complex for $\AAA$ and $\Gm$ are given by 
			\begin{equation*}
				C_i^{\mathrm{cell}}(\AAA) = \begin{cases}
					\Z & i=0, \\ 
					0 & i>0, 
				\end{cases} \quad \text{and} \quad  C_i^{\mathrm{cell}}(\Gm) = \begin{cases}
					\Z\oplus\kmw_1 & i=0, \\ 
					0 & i>0. 
				\end{cases}
			\end{equation*}
		\end{proposition}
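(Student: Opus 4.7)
The plan is to leverage the fact that both $\AAA$ and $\Gm$ are themselves cohomologically trivial, as already observed in the discussion preceding Proposition \ref{prop:cellularStructure}. This means each admits the minimal one-step cellular structure $\emptyset \subset X$ in which the unique stratum $X_0 = X$ has codimension $0$, is smooth affine, and is cohomologically trivial. Because this filtration has no strata in positive codimension, the cellular $\AAA$-chain complex defined in \cite{morel2023cellular} is automatically concentrated in degree $0$, which immediately yields the vanishing $C_i^{\mathrm{cell}}(\AAA) = C_i^{\mathrm{cell}}(\Gm) = 0$ for $i>0$.

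It then remains to identify $C_0^{\mathrm{cell}}$ in each case. For a cellular scheme whose filtration consists of a single codimension-$0$ stratum, unpacking the Morel--Sawant definition identifies $C_0^{\mathrm{cell}}(X)$ with the free strictly $\AAA$-invariant Nisnevich sheaf of abelian groups associated to $X$, i.e.\ with $\mathbf{H}_0^{\AAA}(X_+, \Z)$. For $X = \AAA$, contractibility of $\AAA$ in $\Hk$ reduces this to the corresponding computation for $\Spec(k)$, and yields $\Z$. For $X = \Gm$, I would appeal to Morel's theorem (see \cite[Ch.~3]{Morel1}) that the free strictly $\AAA$-invariant sheaf on the pointed sheaf $(\Gm, 1)$ is $\kmw_1$; adjoining the basepoint gives $C_0^{\mathrm{cell}}(\Gm) \cong \Z \oplus \kmw_1$, where the $\Z$ summand records the unit section and the reduced summand $\kmw_1$ is the first unramified Milnor--Witt $K$-theory sheaf.

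The main obstacle is the identification of $C_0^{\mathrm{cell}}$ with the linearization $\mathbf{H}_0^{\AAA}(X_+, \Z)$: this requires carefully unwinding the construction of the cellular chain complex in \cite{morel2023cellular} in the degenerate case of a one-step filtration and checking that the boundary maps into and out of degree $0$ are trivial. The remaining input, Morel's computation of $\widetilde{\mathbf{H}}_0^{\AAA}(\Gm) \cong \kmw_1$, is standard and handles the nontrivial half of the statement.
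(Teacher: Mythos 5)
The paper does not actually prove this proposition; it simply attributes the computation to Morel and Sawant, so there is no in-paper argument to compare against. Your blind attempt, by contrast, gives a genuine proof from first principles, and it is the natural one: the one-step filtration $\emptyset = \Omega_{-1} \subsetneq \Omega_0 = X$ is a valid cellular structure for both $X = \AAA$ and $X = \Gm$ precisely because each is smooth, affine, of codimension $0$ in itself, and cohomologically trivial (the last point is exactly the observation recalled before Proposition~\ref{prop:cellularStructure}). With a filtration of length one the cellular chain complex is concentrated in degree $0$, and identifying $C_0^{\mathrm{cell}}(X)$ with the free strictly $\AAA$-invariant sheaf on $X_+$ (equivalently $\mathbf{H}_0^{\AAA}(X_+)$) then reduces the claim to $\widetilde{\mathbf{H}}_0^{\AAA}(\AAA) = 0$, which is clear by $\AAA$-contractibility, and to Morel's theorem that the free strictly $\AAA$-invariant sheaf on $(\Gm, 1)$ is $\kmw_1$, which is the only substantive input. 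You are right to flag the identification of $C_0^{\mathrm{cell}}$ with $\mathbf{H}_0^{\AAA}(X_+)$ as the step requiring one to unwind Morel--Sawant's definition; in the one-step situation the only quotient in the construction is $\Omega_0/\Omega_{-1} \simeq X_+$, there is no normal-bundle twist since the stratum has codimension $0$, and the boundary maps are vacuously zero, so that identification does hold. The proof is correct and fills in the computation the paper takes as given.
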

		 When $S\in Ab_{\AAA}(k)$, we will abuse notation and write $S$ for the chain complex concentrated in degree $0$ with value $S$. Since $C^{\mathrm{cell}}({-})$ preserves products and colimits, it sends the motivic moment-angle complex to a polyhedral product in $ D_{\AAA}(Ab(k))$. We can now define a second $\AAA$-chain complex for $\zka$ as a polyhedral product
         \begin{equation*}
             C^{\mathrm{cell}}_*(\zka) \simeq  (C_*^{\mathrm{cell}}(\AAA),C_*^{\mathrm{cell}}(\Gm))^K=(\Z,\Z \oplus \kmw_1)^K.
         \end{equation*} 
        Recall that for a simplical complex $K$, we denote the geometric realization of $K$ by $\B{K}$ as in Definition \ref{def:simpcplxRealization}. In $D_{\AAA}(Ab(k))$, the geometric realization $\B{K}$ is represented by any singular chain complex that computes the singular homology of $K$ as a topological space. 
         \begin{proposition}
         \label{prop:zkachaincomplex}
             There the following is an equivalence of of chain complexes in $D(Ab_{\AAA}(k))$
             \begin{equation*}
             (C_*^{\mathrm{cell}}(\AAA),C_*^{\mathrm{cell}}(\Gm))^K \simeq  \bigvee_{I\not \in K} \Sigma \B{K_I}\wedge (\Z \oplus \kmw_{|I|}).
        \end{equation*}
         \end{proposition}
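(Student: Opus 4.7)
The plan is to apply Corollary \ref{cor:stableSplittingContractible} to the polyhedral product $(\Z, \Z \oplus \kmw_1)^K$ in the $\infty$-category $D(Ab_{\AAA}(k))$, using the derived tensor product as the monoidal structure in place of the cartesian product. As in the remark following Theorem \ref{thm:stableMMACsplit} for $\Hk$, one can re-run the proofs of Section \ref{sec:stableSplit} verbatim in any closed symmetric monoidal $\infty$-category with small colimits, replacing the cartesian product by the monoidal product and the terminal object by the monoidal unit. For $D(Ab_{\AAA}(k))$ this is legitimate because $\otimes$ is closed (hence preserves colimits separately in each variable), and the unit is $\Z$.

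With this reinterpretation, the tensor unit $\Z = C_*^{\mathrm{cell}}(\AAA)$ is ``contractible'' in the relevant sense, and the canonical morphism $C_*^{\mathrm{cell}}(\Gm) = \Z \oplus \kmw_1 \to \Z$ is the projection onto the basepoint. Corollary \ref{cor:stableSplittingContractible} then gives
\begin{equation*}
\Sigma (\Z, \Z \oplus \kmw_1)^K \simeq \Sigma \bigvee_{I \not \in K} \B{K_I} \star (\Z \oplus \kmw_1)^{\wedge |I|}.
\end{equation*}
Using the equivalence $X \star Y \simeq \Sigma X \wedge Y$ and the identification $(\Z \oplus \kmw_1)^{\wedge |I|} \simeq \Z \oplus \kmw_{|I|}$, the latter being a consequence of the multiplicativity $\kmw_i \otimes \kmw_j \cong \kmw_{i+j}$ of Milnor--Witt $K$-theory, the right-hand side becomes $\Sigma^2 \bigvee_{I \not \in K} \B{K_I} \wedge (\Z \oplus \kmw_{|I|})$. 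Desuspending once, which is legitimate since $D(Ab_{\AAA}(k))$ is stable and $\Sigma$ is an equivalence there, yields the claimed formula.

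The main technical task is justifying the framework transfer in the first paragraph: every proof in Sections \ref{sec:inftysetup} and \ref{sec:stableSplit} must be checked after substituting $\otimes$ for $\times$ and $\Z$ for the terminal object. The key inputs (the Ganea splitting, the initial diagram lemma, and the wedge decomposition for diagrams with null maps) depend only on closedness of the monoidal structure, so the transfer is essentially formal but requires careful bookkeeping of the reinterpreted notions of wedge, smash, and join. A secondary calculation is the smash power identity $(\Z \oplus \kmw_1)^{\wedge |I|} \simeq \Z \oplus \kmw_{|I|}$, which follows by induction once one observes that in this pointed monoidal setting the smash of $\Z \oplus A$ with $\Z \oplus B$ decomposes as $\Z \oplus (A \otimes B)$.
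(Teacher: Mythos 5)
Your approach is genuinely different from the paper's and is in fact sanctioned by the paper itself: the remark following Theorem \ref{thm:cellularHomology} observes that one could recover the cellular homology from Corollary \ref{cor:stableSplittingContractible} exactly as you propose. The paper's own proof of Proposition \ref{prop:zkachaincomplex} takes the more elementary route of decomposing the colimit diagram by hand into wedge summands $E_I$ and then invoking only Lemma \ref{lem:initialdiagram}, whereas you invoke the full Corollary \ref{cor:stableSplittingContractible} (hence the entire chain back through Proposition \ref{prop:stableSmashProdSplit}, Corollary \ref{lem:iteratedGanea}, and Lemma \ref{lem:ganeasplitting}) and then desuspend using stability of $D(Ab_{\AAA}(k))$. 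Both strategies reach the same formula, and your smash-power computation $(\Z\oplus\kmw_1)^{\wedge n}\simeq\Z\oplus\kmw_n$ is correct.

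However, there is a real gap in the justification for the framework transfer. You assert that ``the key inputs (the Ganea splitting, the initial diagram lemma, and the wedge decomposition for diagrams with null maps) depend only on closedness of the monoidal structure, so the transfer is essentially formal.'' This is wrong for the Ganea splitting. The proof of $\Sigma(X\times Y)\simeq \Sigma X\vee\Sigma Y\vee\Sigma(X\wedge Y)$ uses the cofiber sequence $X\vee Y\to X\times Y\to X\wedge Y$ together with a retraction $\Sigma(X\times Y)\to\Sigma X\vee\Sigma Y$ built from the projections $X\times Y\to X$ and $X\times Y\to Y$. In a non-cartesian closed symmetric monoidal $\infty$-category there are in general no such projections out of $X\otimes Y$, and the Ganea splitting is simply false; closedness (i.e.\ colimit-preservation of $\otimes$) is not sufficient. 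The salvage in the present situation is that the pointed objects $\Z$ and $\Z\oplus\kmw_1$ admit retractions to the unit $\Z$, and for such ``augmented'' objects the projections $X\otimes Y\to X\otimes\Z\simeq X$ and $X\otimes Y\to\Z\otimes Y\simeq Y$ do exist, so Corollary \ref{lem:iteratedGanea} can be re-proved for the relevant diagrams (indeed, since $D(Ab_{\AAA}(k))$ is stable, the splitting $(\Z\oplus A)\otimes(\Z\oplus B)\simeq(\Z\oplus A)\vee(\Z\oplus B)\vee(A\otimes B)$ holds without the suspension). You need to state this explicitly rather than appeal to closedness. By contrast, Lemma \ref{lem:initialdiagram}, which is the only abstract input the paper's own proof uses, does transfer for the reason you cite, since its proof uses only that $c\otimes{-}$ preserves colimits; this is precisely why the paper's direct argument sidesteps the issue you are glossing over.
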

         \begin{proof}
             We saw earlier how tensor products of pointed complexes concentrated in degree zero splits into a wedge of complexes. Since $(C_*^{\mathrm{cell}}(\AAA),C_*^{\mathrm{cell}}(\Gm))^K$ is a colimit of tensor products of complexes concentrated in degree $0$, we get the following chain of equivalences
            \begin{equation*}
                (C_*^{\mathrm{cell}}(\AAA),C_*^{\mathrm{cell}}(\Gm))^K \simeq \colim_{\sigma \in K} D(\sigma) \simeq \colim_{\sigma \in K} (\Z \oplus \kmw_1)^{\otimes (m-|\sigma|)}.
            \end{equation*}
            We can now split the diagram into a wedge of diagrams. For each $I\subseteq [m]$ and $\sigma \in K$ define
            \begin{equation*}
                E_I(\sigma) = \begin{cases}
                    \Z \oplus \kmw_{|I|} &  \sigma \not \subseteq I, \\
                    \Z &  \sigma \subseteq I.
                \end{cases}
            \end{equation*}
            There is an equivalence
            \begin{equation*}
                \colim_{\sigma \in K} (\Z \oplus \kmw_1)^{\otimes (m-|\sigma|)}  \simeq \bigvee_{I\subseteq [m]} \colim_{\sigma \in K} E_I(\sigma).
            \end{equation*}
             We are now in a similar situation as in the proof of Proposition \ref{prop:smashPolyhedralSplit}. Fix $I \subset [m]$ and look at the colimit 
             \begin{equation*}
                 \colim_{\sigma \in K} E_I(\sigma).
             \end{equation*}
             For each $i \not \in I$, the maps induced by $\tau \subset \sigma$ obtained by removing vertex $i$ are identity maps. We get an equivalence of colimits 
             \begin{equation*}
                 \colim_{\sigma \in K} E_I(\sigma) \simeq \colim_{\sigma \in K_I} E'_I(\sigma),
             \end{equation*}
             where $E'_I(\sigma) = \Z \oplus \kmw_{|I|}$ if $\sigma = \emptyset$ and $E'_I(\sigma) = \Z$ otherwise. This is a diagram that satisfies Lemma \ref{lem:initialdiagram}, and we get an equivalence 
             \begin{equation*}
                 \colim_{\sigma \in K} E_I(\sigma) \simeq \Sigma \B{K_I} \wedge (\Z \oplus \kmw_{\B{I}}).
             \end{equation*}
            With the colimit of each wedge summand computed, we get
             \begin{equation*}
                 (C_*^{\mathrm{cell}}(\AAA),C_*^{\mathrm{cell}}(\Gm))^K  \simeq \bigvee_{I\subseteq [m]} \colim_{\sigma \in K} E_I(\sigma) \simeq \bigvee_{I\subseteq [m]} \Sigma \B{K_I} \wedge (\Z \oplus \kmw_{\B{I}}).
             \end{equation*}
             Since $\B{K_I}$ is contractible if $I \in K$, we only need to consider $K \not \in I$. This proves the claim.
         \end{proof}
         
        This splitting result can be seen as an unstable version of Theorem \ref{thm:stableSplitting}. Computing the homology of the chain complex from Proposition \ref{prop:zkachaincomplex} yields the following.
        \begin{corollary}
        \label{cor:homology}
            Let $K$ be a simplicial complex. There is an isomorphism 
            \begin{equation*}
        \widetilde{\mathbf{H}}_*^{\mathrm{cell}}(\zka) \cong \bigoplus_{I \not \in K} \widetilde{\mathbf{H}}_*(\Sigma \B{K_I}\wedge (\Z \oplus \kmw_{\B{I}}).
        \end{equation*}
        \end{corollary}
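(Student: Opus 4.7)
The plan is to take the equivalence of chain complexes supplied by Proposition \ref{prop:zkachaincomplex} and simply pass to homology, using that wedges in $D(Ab_{\AAA}(k))$ become direct sums after taking $\widetilde{\mathbf{H}}_*$.

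More precisely, first I would invoke the identification established just before Proposition \ref{prop:zkachaincomplex}, namely that $C_*^{\mathrm{cell}}({-})$ preserves products and colimits of cohomologically trivial objects, so that
\begin{equation*}
C_*^{\mathrm{cell}}(\zka) \simeq (C_*^{\mathrm{cell}}(\AAA),C_*^{\mathrm{cell}}(\Gm))^K
\end{equation*}
in $D(Ab_{\AAA}(k))$. Combined with Proposition \ref{prop:zkachaincomplex} this gives
\begin{equation*}
C_*^{\mathrm{cell}}(\zka) \simeq \bigvee_{I \not \in K} \Sigma \B{K_I} \wedge (\Z \oplus \kmw_{|I|}).
\end{equation*}

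Next I would take reduced homology of both sides. Since reduced homology in the derived category of strictly $\AAA$-invariant sheaves sends arbitrary wedges to direct sums, the right-hand side becomes $\bigoplus_{I \not \in K} \widetilde{\mathbf{H}}_*(\Sigma \B{K_I} \wedge (\Z \oplus \kmw_{|I|}))$, yielding the claimed isomorphism for $\widetilde{\mathbf{H}}_*^{\mathrm{cell}}(\zka)$.

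There is essentially no obstacle here beyond bookkeeping: the content of the result is entirely contained in Proposition \ref{prop:zkachaincomplex}, and the corollary is the standard fact that passing from a chain complex equivalence to graded homology groups converts wedges to direct sums. The only point that deserves a brief remark is the identification of the left-hand side as the homology of an actual cellular chain complex, which is justified by \cite[Corollary 2.42]{morel2023cellular} as mentioned in the paragraph preceding the proposition: any two cellular $\AAA$-chain complexes of $\zka$ are equivalent in $D(Ab_{\AAA}(k))$, so it is legitimate to compute $\widetilde{\mathbf{H}}_*^{\mathrm{cell}}(\zka)$ from the polyhedral-product model.
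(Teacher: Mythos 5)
Your proposal is correct and follows the same route as the paper: the corollary is stated immediately after Proposition \ref{prop:zkachaincomplex} with the remark ``Computing the homology of the chain complex from Proposition \ref{prop:zkachaincomplex} yields the following,'' which is exactly the argument you spell out. Your additional care in invoking \cite[Corollary 2.42]{morel2023cellular} to justify computing $\widetilde{\mathbf{H}}_*^{\mathrm{cell}}(\zka)$ from the polyhedral-product model of the chain complex is implicit in the paper but worth making explicit.
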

            We wish to describe the cellular $\AAA$-homology of $\zka$ in terms of $K$, so the next natural step is to understand what smashing with $(\Z \oplus \kmw_{|I|})$ and suspending $\B{K_I}$ does to the homology. 
		\begin{definition}
			For a chain complex $C$ with differential $d$, the cone of $C$ is defined  as the chain complex $\textrm{Cone}(C)_n := C_n \oplus C_{n-1} \oplus C_n$, with differential 
            \begin{equation*}
                \delta_n(a,b,c) = (da + b, -db, dc-b).
            \end{equation*}
		\end{definition}
		For a diagram $( Y \longleftarrow X \longrightarrow Z )$ of chain complexes $X$, $Y$, and $Z$, with chain maps~$i\colon X \to Y$ and $j\colon X \to Z$, we can create the following chain complex modeling the homotopy pushout in a category of chain complexes \begin{equation*}
		    C_n = Y_n \oplus X_{n-1} \oplus X_{n} \oplus X_{n-1} \oplus Z_n
		\end{equation*} with differential
		\begin{equation*}
			\partial(y,x_{n-1},x_n,x_{n-1}',z_n)= \left(dy+i(x_{n-1}),-dx_{n-1},dx_n - x_{n-1}+x_{n-1}', -dx_{n-1}',dz+j(x_{n-1}')\right).
		\end{equation*}
        A visualization of the complex can be seen below. The complex $C$ is the homotopy pushout ${( Y \longleftarrow X \longrightarrow Z )}$.
		\[\begin{tikzcd}
			{Y_0} && {X_0} && {Z_0} \\
			\\
			{Y_1} & {X_0} & {X_1} & {X_0} & {Z_1} \\
			\\
			{Y_2} & {X_1} & {X_2} & {X_1} & {Z_2} \\
			\vdots & \vdots & \vdots & \vdots & \vdots
			\arrow["d"{description}, from=3-3, to=1-3]
			\arrow["d"{description}, from=5-3, to=3-3]
			\arrow["{-1}"{description}, from=3-2, to=1-3]
			\arrow["1"{description}, from=3-4, to=1-3]
			\arrow["{-1}"{description}, from=5-2, to=3-3]
			\arrow["1"{description}, from=5-4, to=3-3]
			\arrow["d"{description}, from=3-1, to=1-1]
			\arrow["d"{description}, from=5-1, to=3-1]
			\arrow["d"{description}, from=5-5, to=3-5]
			\arrow["d"{description}, from=3-5, to=1-5]
			\arrow["i"{description}, from=3-2, to=1-1]
			\arrow["j"{description}, from=3-4, to=1-5]
			\arrow["j"{description}, from=5-4, to=3-5]
			\arrow["i"{description}, from=5-2, to=3-1]
			\arrow["{-d}"{description}, from=5-2, to=3-2]
			\arrow["{-d}"{description}, from=5-4, to=3-4]
			\arrow[from=6-1, to=5-1]
			\arrow[from=6-2, to=5-2]
			\arrow[from=6-3, to=5-3]
			\arrow[from=6-4, to=5-4]
			\arrow[from=6-5, to=5-5]
			\arrow[from=6-4, to=5-5]
			\arrow[from=6-4, to=5-3]
			\arrow[from=6-2, to=5-3]
			\arrow[from=6-2, to=5-1]
		\end{tikzcd}\]

  In the category of pointed chain complexes of strictly $\AAA$-invariant sheaves, the constant sheaf $\Z$ is the terminal object. We can now compute the wedge of two pointed complexes. 
		\begin{proposition}
		    Let $C,D \in \calD(Ab_{\AAA}(k))$ be pointed chain complexes with $C_0 = \Z \oplus \widetilde{C}_0$ and $D_0 = \Z \oplus \widetilde{D}_0$. Then the wedge $C\vee D$ is the chain complex
      \begin{equation*}
          (C\vee D)_n = \begin{cases}
              \Z \oplus \widetilde{C}_0 \oplus \widetilde{D}_0 & n = 0, \\
              C_n \oplus D_n & n > 0.
          \end{cases}
      \end{equation*}
      \begin{proof}
          Compute the homotopy pushout as in Definition \ref{def:wedge} using a cone on the chain complex of the point. 
      \end{proof}
		\end{proposition}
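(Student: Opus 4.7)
The wedge $C \vee D$ is, by Definition~\ref{def:wedge}, the pushout of $C \leftarrow * \to D$ in pointed chain complexes; here the terminal pointed object $*$ is $\Z$ concentrated in degree~$0$, and the two maps are the basepoint inclusions onto the first summand of $C_0 = \Z \oplus \widetilde{C}_0$ and $D_0 = \Z \oplus \widetilde{D}_0$. My plan is to invoke the explicit chain-level homotopy-pushout model $Y_n \oplus X_{n-1} \oplus X_n \oplus X_{n-1} \oplus Z_n$ displayed in the paragraph preceding the proposition, specialized to $X = \Z$, $Y = C$, $Z = D$.

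Since $\Z$ is concentrated in degree $0$, every summand involving $X_n$ or $X_{n-1}$ for $n\geq 2$ vanishes, so in those degrees the formula collapses immediately to $C_n \oplus D_n$ with the expected direct-sum differential, matching the claim. Only degrees $0$ and $1$ remain: in degree $0$ the model yields $C_0 \oplus \Z \oplus D_0$, and in degree $1$ it yields $C_1 \oplus \Z \oplus \Z \oplus D_1$, where the extra copies of $\Z$ come from the cone on the chain complex of the point.

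I would then use the explicit differential $\partial(y,a,0,b,z) = (dy + \iota_C(a),\, b - a,\, dz + \iota_D(b))$, obtained by specializing the formula from the preceding paragraph, to exhibit an acyclic sub-direct-summand isomorphic to $\mathrm{Cone}(\Z)$. Concretely, the restriction of the differential to the three ``basepoint-indexed'' $\Z$ summands is the map $\Z\oplus\Z \to \Z\oplus\Z\oplus\Z$ given by $(a,b) \mapsto (a,\, b - a,\, b)$, which is injective with cokernel $\Z$; this identifies a contractible direct summand whose complement is precisely the chain complex of the proposition, with $\Z \oplus \widetilde{C}_0 \oplus \widetilde{D}_0$ in degree~$0$ and $C_n \oplus D_n$ in degree~$n \geq 1$.

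The main obstacle will be the bookkeeping in degrees $0$ and $1$: one must choose the splitting so that it respects the decompositions $C_0 = \Z \oplus \widetilde{C}_0$ and $D_0 = \Z \oplus \widetilde{D}_0$ without mixing $\widetilde{C}_0$ with $\widetilde{D}_0$. The pointedness hypothesis is essential here, since it ensures that $d\colon C_1 \to C_0$ lands in the $\widetilde{C}_0$ summand (and similarly for $D$), so the splitting can be chosen to leave those summands untouched; all remaining verifications are routine linear algebra.
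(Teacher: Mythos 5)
Your proposal is correct and fills in the details of the paper's one-line proof along exactly the same lines: both use the explicit double-mapping-cylinder model $Y_n \oplus X_{n-1} \oplus X_n \oplus X_{n-1} \oplus Z_n$ displayed in the preceding paragraph, specialized to $X = \Z$ in degree $0$, and then split off the resulting acyclic summand. Your observation that the pointedness hypothesis forces $d\colon C_1 \to C_0$ to land in $\widetilde{C}_0$ is precisely the point needed to see that the complement you describe is a subcomplex and hence a direct-summand complement, so the argument goes through.
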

  We can now compute the smash product of $C\wedge D \in \calD(Ab_{\AAA}(k))$  as the homotopy pushout of the square 
  \begin{equation*}
      * \leftarrow C \vee D \rightarrow C\otimes D.
  \end{equation*}

		\begin{lemma}
   \label{lem:homologyGMsmash}
            Let $C$ be a pointed chain complex of strictly $\AAA$-invariant sheaves and let~$n>0$, then there is an isomorphism
            \begin{equation*}
                \mathbf{H}_i(C \wedge (\Z \oplus \kmw_n)) \cong\begin{cases}
                \Z \oplus (\widetilde{\mathbf{H}}_0(C)\otimes \kmw_n) & i=0,\\
                    \mathbf{H}_i(C) \otimes \kmw_n  & i > 0.
                \end{cases}
            \end{equation*}
            
		\end{lemma}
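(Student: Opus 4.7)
The plan is to compute $C \wedge (\Z\oplus \kmw_n)$ directly from its defining pushout and read off the homology via the induced cofiber sequence. Writing $D := \Z\oplus \kmw_n$, which is concentrated in degree $0$ and pointed with reduced part $\widetilde{D}_0 = \kmw_n$, the smash product is by definition the pushout of $* \leftarrow C \vee D \to C \otimes D$, so it sits in a cofiber sequence $C \vee D \to C \otimes D \to C\wedge D$, producing a long exact sequence on reduced homology
\begin{equation*}
\cdots \to \widetilde{\mathbf{H}}_i(C\vee D) \to \widetilde{\mathbf{H}}_i(C\otimes D) \to \widetilde{\mathbf{H}}_i(C\wedge D) \to \widetilde{\mathbf{H}}_{i-1}(C\vee D) \to \cdots.
\end{equation*}

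First I would compute the two outer terms. Since $D$ is concentrated in degree $0$ and decomposes as a direct sum where the $\kmw_n$ summand is flat in $\mathrm{Ab}_{\A^1}(k)$ (using the multiplicativity $\kmw_i \otimes \kmw_j \cong \kmw_{i+j}$ already invoked in the text, or a derived interpretation), the Künneth formula gives
\begin{equation*}
\mathbf{H}_i(C\otimes D) \cong \mathbf{H}_i(C)\otimes D_0 \cong \mathbf{H}_i(C) \oplus \bigl(\mathbf{H}_i(C)\otimes \kmw_n\bigr).
\end{equation*}
From the wedge proposition just preceding the lemma, $\widetilde{\mathbf{H}}_0(C\vee D) = \widetilde{\mathbf{H}}_0(C)\oplus \kmw_n$ and $\widetilde{\mathbf{H}}_i(C\vee D) = \mathbf{H}_i(C)$ for $i>0$, since $D$ is concentrated in degree $0$.

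Next I would analyse the map $C\vee D \to C\otimes D$, which in each degree is the standard inclusion $(c,d)\mapsto (c\otimes 1_D,\, 1_C\otimes d)$ into $C\otimes D$. For $i>0$ this is the inclusion $\mathbf{H}_i(C)\hookrightarrow \mathbf{H}_i(C)\oplus (\mathbf{H}_i(C)\otimes \kmw_n)$ as the first summand, hence split injective with cokernel $\mathbf{H}_i(C)\otimes \kmw_n$. In degree $0$, on reduced homology the map identifies $\widetilde{\mathbf{H}}_0(C)\oplus \kmw_n$ with the subgroup $\widetilde{\mathbf{H}}_0(C)\oplus \kmw_n \oplus 0$ of $\widetilde{\mathbf{H}}_0(C)\oplus \kmw_n \oplus (\widetilde{\mathbf{H}}_0(C)\otimes \kmw_n)$, which is again split injective with cokernel $\widetilde{\mathbf{H}}_0(C)\otimes \kmw_n$.

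Because the maps $\widetilde{\mathbf{H}}_i(C\vee D)\to \widetilde{\mathbf{H}}_i(C\otimes D)$ are split injective in every degree, the connecting homomorphisms in the long exact sequence vanish, and the sequence collapses to short exact sequences identifying $\widetilde{\mathbf{H}}_i(C\wedge D)$ with the cokernels computed above. Adding back the $\Z$ summand in degree $0$ gives the asserted formula. The subtle point — and really the only one requiring care — is the splitting in degree $0$: one must verify that the basepoint identifications in the wedge precisely match the unit $1_C\otimes 1_D$ in the tensor product, so that the image of $C\vee D$ hits only the three summands $\widetilde{\mathbf{H}}_0(C)\oplus \kmw_n$ and no cross term of the form $\widetilde{\mathbf{H}}_0(C)\otimes \kmw_n$. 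This is a direct consequence of the pushout definitions of $\vee$ and $\wedge$.
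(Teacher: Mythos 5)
Your argument is correct, and it takes a genuinely different (and cleaner) route than the paper's. The paper builds an explicit chain-level model of the homotopy pushout $* \leftarrow C\vee(\Z\oplus\kmw_n) \to C\otimes(\Z\oplus\kmw_n)$ as an iterated cone, writes out the differentials $\partial_i$ in matrix form restricted to the various summands, and computes $\ker\partial_i$ and $\Img\partial_{i+1}$ by hand. You instead treat $C\vee D \to C\otimes D \to C\wedge D$ as a cofiber sequence, compute the two outer terms from the wedge formula and a K\"unneth decomposition for $C\otimes D$, observe that $C\vee D \to C\otimes D$ is a degreewise split monomorphism (hence split injective on homology), and conclude that the long exact sequence collapses to short exact sequences whose cokernels give the stated answer. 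This avoids the paper's explicit cone model entirely and makes the structural reason for the collapse visible. One point worth being honest about: both your K\"unneth step and the paper's identification $\ker d_1^{\mathrm{MW}}=\ker(d_1)\otimes\kmw_n$, $\Img d_1^{\mathrm{MW}}=\Img(d_1)\otimes\kmw_n$ implicitly use that $-\otimes\kmw_n$ is exact on $Ab_{\AAA}(k)$; neither proof spells this out, so you are on the same footing as the paper there. The final bookkeeping in degree $0$ (that the image of $C\vee D$ in $(C\otimes D)_0 = \Z\oplus\widetilde{C}_0\oplus\kmw_n\oplus(\widetilde{C}_0\otimes\kmw_n)$ is exactly the first three summands) is also correct, since the map is $c\mapsto c\otimes 1_D$ on the $C$-leg and $d\mapsto 1_C\otimes d$ on the $D$-leg, glued along the basepoint $\Z$.
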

		\begin{proof}
			To  prove the quasi-isomorphism, we will compute the homology. We have the equivalence  
			\begin{equation*}
				C \wedge (\Z \oplus \kmw_n) \simeq \colim(* \longleftarrow  C \vee (\Z \oplus \kmw_n) \longrightarrow C \otimes (\Z \oplus \kmw_n).
			\end{equation*}
			
			  The chain complex for $C \wedge (\Z \oplus \kmw_n)$ can be modeled as the following homotopy pushout:
\[\begin{tikzcd}[cramped,column sep=small]
	\Z & {\widetilde{C}_0\oplus\Z\oplus \kmw_n} && {\widetilde{C}_0\otimes \kmw_n\oplus \widetilde{C}_0\oplus \kmw_n \oplus \Z} \\
	{\widetilde{C}_0\oplus\Z\oplus \kmw_n} & {C_1} & {\widetilde{C}_0\oplus\Z\oplus \kmw_n} & {C_1\otimes \kmw_n\oplus C_1} \\
	{C_1} & {C_2} & {C_1} & {C_2\otimes \kmw_n\oplus C_2} \\
	{C_2} & {C_3} & {C_2} & {C_3\otimes \kmw_n\oplus C_3} \\
	\vdots & \vdots & \vdots & \vdots
	\arrow["{-1}"{description}, from=2-1, to=1-2]
	\arrow["\epsilon", from=2-1, to=1-1]
	\arrow["1"{description}, from=2-3, to=1-2]
	\arrow["{-1}"{description}, from=3-1, to=2-2]
	\arrow["{-1}"{description}, from=4-1, to=3-2]
	\arrow["1"{description}, from=4-3, to=3-2]
	\arrow["1"{description}, from=3-3, to=2-2]
	\arrow["i"{description}, from=2-3, to=1-4]
	\arrow["i"{description}, from=3-3, to=2-4]
	\arrow["i"{description}, from=4-3, to=3-4]
	\arrow[from=2-4, to=1-4]
	\arrow[from=4-4, to=3-4]
	\arrow[from=3-4, to=2-4]
	\arrow[from=4-3, to=3-3]
	\arrow[from=3-3, to=2-3]
	\arrow[from=4-2, to=3-2]
	\arrow[from=3-2, to=2-2]
	\arrow[from=2-2, to=1-2]
	\arrow[from=4-1, to=3-1]
	\arrow[from=3-1, to=2-1]
	\arrow[from=5-4, to=4-4]
	\arrow[from=5-3, to=4-3]
	\arrow[from=5-3, to=4-4]
	\arrow[from=5-3, to=4-2]
	\arrow[from=5-2, to=4-2]
	\arrow[from=5-1, to=4-1]
	\arrow[from=5-1, to=4-2]
\end{tikzcd}\]
			All vertical arrows except $\epsilon$ are inherited from the differential $d$ on $C$. The map $\epsilon$ is projection onto $\Z$. The map $i$ is the inclusion. We may restrict $\partial_1$ to the factors~$\Z, \kmw_n$, and $\widetilde{C}_0$.
            \begin{equation*}
			   \partial_1^{\Z} =  \begin{pmatrix}
			        \mathrm{Id}_{\Z} & -\mathrm{Id}_{\Z} & 0 \\ 0 & \mathrm{Id}_{\Z} & \mathrm{Id}_{\Z}
			    \end{pmatrix} \quad \quad  \partial_1^{\mathrm{MW}}=
                 \begin{pmatrix}
			        -\mathrm{Id}_{\kmw_n} &0 \\ \mathrm{Id}_{\kmw_n} & -\mathrm{Id}_{\kmw_n}
			    \end{pmatrix}
                \quad \quad  \partial_1^{\widetilde{C}_0}=
                 \begin{pmatrix}
			        -\mathrm{Id}_{\widetilde{C}_0} &0 \\ \mathrm{Id}_{\widetilde{C}_0} & -\mathrm{Id}_{\widetilde{C}_0}
			    \end{pmatrix}
			\end{equation*}
            It is straightforward to check that all of the three restricted differentials above are surjective. The map $d_1^{\mathrm{MW}}\colon C_1 \otimes \kmw_n \to \widetilde{C}_0 \otimes \kmw_n$ has image $\Img(d_1) \otimes \kmw_n$. Thus \begin{equation*}
                \mathbf{H}_0(C \wedge (\Z \oplus \kmw_n)) \cong \Z \oplus (\widetilde{\mathbf{H}}_0(C)\otimes \kmw_n).
            \end{equation*}
            
            To compute $\ker \partial_1$, we first note that $\ker \partial_1^{\Z}$,   $\ker  \partial_1^{\mathrm{MW}}$, and $\ker \partial_1^{\widetilde{C}_0}$ are all trivial. We get $\ker d_1^{\mathrm{MW}} = \ker(d_1) \otimes \kmw_n$. The map onto $\widetilde{C}_0 \oplus \widetilde{C}_0$ is given by the matrix
            \begin{equation*}
                \partial_1^C = \begin{pmatrix}
			        -\mathrm{Id}_{\widetilde{C}_0} & 0\\ \mathrm{Id}_{\widetilde{C}_0} & \mathrm{Id}_{\widetilde{C}_0} \\ d_1 & 0 \\ 0 & d_1
			    \end{pmatrix} \colon \widetilde{C}_0 \oplus \widetilde{C}_0 \oplus C_1 \oplus C_1 \rightarrow \widetilde{C}_0 \oplus \widetilde{C}_0.
            \end{equation*}
            Since $\Img d_1 \subset \widetilde{C}_0$, the kernel is $\ker \partial_1^C = C_1 \oplus C_1$. Thus $\ker \partial_1 = \ker \partial_1^C \oplus \ker d_1^{\mathrm{MW}}$. This result also extends to $\ker \partial_i = C_i \oplus C_i \oplus (\ker d_i \otimes \kmw_n)$. Computing  image of $\partial_{i+1}$ yields~$\Img \partial_i = C_i \oplus C_i \oplus (\Img d_{i+1}\otimes \kmw_n)$. The homology of the complex is $ \mathbf{H}_i(C)\otimes \kmw_n$ for $i\geq 1$. 
		\end{proof}
		A similar proof strategy yields the following result.
		\begin{lemma}
			\label{lem:homologySusp} 
            Let $C$ be a pointed chain complex of strictly $\AAA$-invariant sheaves, then there is an isomorphism
            \begin{equation*}
                \mathbf{H}_i(\Sigma C) \cong \begin{cases}
                \Z  & i=0,\\
                    \widetilde{\mathbf{H}}_{i-1}(C)  & i > 0.
                \end{cases}
            \end{equation*}
		\end{lemma}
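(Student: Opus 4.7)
The plan is to mirror the strategy used in the proof of Lemma~\ref{lem:homologyGMsmash}. The suspension $\Sigma C$ in the category of pointed chain complexes of strictly $\AAA$-invariant sheaves is the homotopy pushout $\Z \longleftarrow C \longrightarrow \Z$, where both arrows are the augmentation $\epsilon\colon C \to \Z$ (projection onto the $\Z$-summand in degree zero, and zero in higher degrees). Observe that $\epsilon$ is a chain map precisely because $d$ lands in $\widetilde{C}_0 = \ker \epsilon$ when $C$ is pointed. I would first apply the explicit five-term model for the homotopy pushout introduced earlier in this section (with $Y = Z = \Z$ and $X = C$) to obtain an explicit chain complex with $(\Sigma C)_0 = \Z \oplus C_0 \oplus \Z$ and $(\Sigma C)_n = C_{n-1} \oplus C_n \oplus C_{n-1}$ for $n\geq 1$, together with the prescribed differentials.

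A cleaner reformulation, which I would then use, is to recognize this homotopy pushout as quasi-isomorphic to the mapping cone of $(\epsilon, -\epsilon)\colon C \to \Z \oplus \Z$. The mapping cone has the simpler chain groups $\mathrm{Cone}_0 = \Z \oplus \Z$ and $\mathrm{Cone}_n = C_{n-1}$ for $n\geq 1$, with differentials $\partial_1(x) = (\epsilon(x), -\epsilon(x))$ and $\partial_n = -d$ for $n \geq 2$. The equivalence between the two models is checked by collapsing the two acyclic diagonal $X_{n-1}$-summands in the five-term model, analogous to the calculations in Lemma~\ref{lem:homologyGMsmash}.

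With this model, the homology is straightforward. In degree zero, $\mathrm{Im}(\partial_1) = \{(a,-a) : a \in \Z\}$ since $\epsilon\colon C_0 \to \Z$ is surjective, giving $\mathbf{H}_0(\Sigma C) = (\Z \oplus \Z)/\Z\cdot(1,-1) \cong \Z$. For $n \geq 2$, the differential is $-d$ shifted by one index, so $\mathbf{H}_n(\Sigma C) \cong \mathbf{H}_{n-1}(C) = \widetilde{\mathbf{H}}_{n-1}(C)$, using that reduced and unreduced homology agree in positive degrees. The case $n = 1$ is the only one that requires separate bookkeeping: $\ker \partial_1 = \ker(\epsilon, -\epsilon) = \ker \epsilon = \widetilde{C}_0$, while $\mathrm{Im}(\partial_2) = \mathrm{Im}(d\colon C_1 \to C_0) \subseteq \widetilde{C}_0$, so $\mathbf{H}_1(\Sigma C) = \widetilde{C}_0 / \mathrm{Im}(d) = \widetilde{\mathbf{H}}_0(C)$.

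The main obstacle is entirely bookkeeping: one must handle the degree~$0$ and degree~$1$ cases carefully, since the pointed structure of $C$ and the fact that $\Z$ (not $0$) is the terminal object in pointed chain complexes both interact nontrivially with the homotopy pushout. Once the mapping cone reformulation is established, the homology computation is mechanical; the only real content is verifying that the five-term model does collapse to the cone, which follows the same pattern as the chain-level manipulations of Lemma~\ref{lem:homologyGMsmash}.
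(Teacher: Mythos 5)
Your proposal is correct, and it follows the paper's (implicit) proof strategy: the paper's proof of Lemma~\ref{lem:homologySusp} simply refers to ``a similar proof strategy'' as Lemma~\ref{lem:homologyGMsmash}, i.e.\ model the homotopy pushout $\Z \xleftarrow{\epsilon} C \xrightarrow{\epsilon} \Z$ by the explicit five-term complex given earlier in the section and compute its homology. Your additional step of recognizing that pushout as quasi-isomorphic to the mapping cone of $(\epsilon,-\epsilon)\colon C \to \Z\oplus\Z$ is a genuine streamlining rather than a different route: it collapses the five columns to $\Z\oplus\Z$ in degree $0$ and $C_{n-1}$ in degree $n\geq 1$, which makes all three degree cases immediate and avoids the block-matrix bookkeeping that the paper carries out in the proof of Lemma~\ref{lem:homologyGMsmash}. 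One hypothesis you correctly make explicit, that $\epsilon$ is a chain map (equivalently $d_1(C_1)\subseteq\widetilde{C}_0$), is not literally part of the paper's definition of a pointed chain complex, but it is used tacitly throughout the section (it is what makes the $\epsilon$ arrow in the Lemma~\ref{lem:homologyGMsmash} diagram a chain map) and holds for every $C_*^{\mathrm{cell}}(X)$ arising from a pointed cellular scheme, so your argument is consistent with the paper's conventions.
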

		
  Applying Lemmas \ref{lem:homologyGMsmash} and \ref{lem:homologySusp} to Corollary \ref{cor:homology} yields the following theorem. 
        \begin{theorem}
        \label{thm:cellularHomology}
			Let $K$ be a simplicial complex. Then $\mathbf{H}_0^{\mathrm{cell}}(\zka)=\Z$ and for $i>0$ 
            \begin{equation*}
                \mathbf{H}_i^{\mathrm{cell}}(\zka)\cong\bigoplus_{I \not \in K} \widetilde{\mathbf{H}}_{i-1}(|K_I|)\otimes \kmw_{|I|}.
            \end{equation*}
		\end{theorem}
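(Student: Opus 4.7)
The plan is to apply Lemmas \ref{lem:homologyGMsmash} and \ref{lem:homologySusp} termwise to the wedge decomposition supplied by Corollary \ref{cor:homology}. That corollary already gives
\begin{equation*}
    \widetilde{\mathbf{H}}_*^{\mathrm{cell}}(\zka) \cong \bigoplus_{I \not \in K} \widetilde{\mathbf{H}}_*\!\left(\Sigma \B{K_I}\wedge (\Z \oplus \kmw_{\B{I}})\right),
\end{equation*}
so the entire problem reduces to computing, for each non-face $I \not\in K$, the reduced homology of $\Sigma \B{K_I}\wedge (\Z \oplus \kmw_{|I|})$. I would first recall that, as noted just before Proposition \ref{prop:zkachaincomplex}, the geometric realization $\B{K_I}$ in $D(Ab_{\AAA}(k))$ is represented by a singular chain complex computing $\mathbf{H}_*(|K_I|)$, so $\widetilde{\mathbf{H}}_i(\B{K_I}) = \widetilde{\mathbf{H}}_i(|K_I|)$.

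Next, for a fixed $I \not\in K$, I would apply Lemma \ref{lem:homologyGMsmash} with $C = \B{K_I}$ and $n = |I|$. This gives $\mathbf{H}_0(\B{K_I} \wedge (\Z \oplus \kmw_{|I|})) = \Z \oplus (\widetilde{\mathbf{H}}_0(|K_I|) \otimes \kmw_{|I|})$ and, in positive degrees, $\mathbf{H}_i(\B{K_I} \wedge (\Z \oplus \kmw_{|I|})) = \widetilde{\mathbf{H}}_i(|K_I|) \otimes \kmw_{|I|}$. Then Lemma \ref{lem:homologySusp} tells us that suspending shifts the reduced part up by one degree, leaving $\mathbf{H}_0$ equal to $\Z$. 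Assembled, this yields
\begin{equation*}
    \mathbf{H}_i\!\left(\Sigma \B{K_I}\wedge (\Z \oplus \kmw_{|I|})\right) \cong
    \begin{cases} \Z & i = 0, \\ \widetilde{\mathbf{H}}_{i-1}(|K_I|) \otimes \kmw_{|I|} & i > 0. \end{cases}
\end{equation*}

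Finally, summing over $I \not\in K$ and taking the reduced part of each summand produces the formula in the theorem for $i > 0$, while the $\mathbf{H}_0 = \Z$ claim comes from the pointed structure of the cellular chain complex (the $\Z$ summands from each term are absorbed into the single $\Z$ representing the basepoint). The bookkeeping to be careful about is the distinction between reduced and unreduced homology and the fact that $\B{K_I}$ is contractible when $I \in K$, so only non-faces contribute; this is already built into the indexing of Corollary \ref{cor:homology}. No deeper obstacle appears, since the two lemmas do all the computational work.
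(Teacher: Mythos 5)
Your proposal matches the paper's proof exactly: the paper's entire argument is the one-liner "Applying Lemmas \ref{lem:homologyGMsmash} and \ref{lem:homologySusp} to Corollary \ref{cor:homology} yields the following theorem," and you have simply unwound what that application looks like in detail (smash with $\Z\oplus\kmw_{|I|}$ via Lemma \ref{lem:homologyGMsmash}, then shift by the suspension via Lemma \ref{lem:homologySusp}, then assemble). The bookkeeping you flag — reduced vs.\ unreduced homology, the basepoint $\Z$'s, and the contractibility of $\B{K_I}$ for $I\in K$ — is handled correctly and is exactly what the paper's corollary already packages.
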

  
\begin{example}
    Let $K = \partial\Delta^{m-1}$, then  
    \begin{equation*}
         \mathbf{H}_i^{\mathrm{cell}}(\zka)\cong \begin{cases}
             \Z & i=0, \\
             \kmw_{m} & i= m-1, \\
             0 & i\neq 0,m-1.
         \end{cases}
    \end{equation*}
\end{example}
\begin{example}
\label{ex:RP2TriangHomology}
    Let $K$ be the following triangulation of $\R \PP^2$. Vertices with the same labels are identified, and all triangles are filled in.
    \begin{equation*}
         % https://tikzcd.yichuanshen.de/#N4Igdg9gJgpgziAXAbVABwnAlgFyxMJZAJgBoAGAXVJADcBDAGwFcYkQBGEAX1PU1z5CKcqQ7U6TVu2I8+IDNjwEiZcTQYs2iEABY5-JUKK6xEzdJ0BmAwoHLhyUcXNTte24sEqUHUi403dgBWT3tjFFMAyS0ZMKMfElIrV1idLl5Db0crf1TLEAA2HgkYKABzeCJQADMAJwgAWyRgmhwIJAAOGgALGHoodkgwNkyQeqaWto7EMhA+gaGCUfkJ5tnprt7+wZ1hldqG9bn2pFz5naWR2zWkPxBTjYvFveWbo7vNxFFn3fA3sa3b5fc4LP77d6TRD3R6mX5XA7jD6IOGPVrw17XQHI7oPGaFbYvf5Y1bI86PAkY4mIoGomYAdkJ4IBpKh6MejKpEOxUNxHKZCMh60p-K5LMOUJ+jzmYMFPPW5JmuNlmJpyJhM3RKupJW4QA
\begin{tikzcd}
          &            & 1 \arrow[rrd, no head] \arrow[d, no head] &                                          &                                          \\
2 \arrow[rr, no head] \arrow[rru, no head] \arrow[d, no head] \arrow[rd, no head] &                                                               & 4 \arrow[rd, no head] \arrow[rr, no head] &                                          & 3 \arrow[d, no head] \arrow[ld, no head] \\
3 \arrow[r, no head] \arrow[rrd, no head]                                         & 5 \arrow[rr, no head] \arrow[ru, no head] \arrow[rd, no head] &                                           & 6 \arrow[r, no head] \arrow[ld, no head] & 2 \arrow[lld, no head]                   \\
     &    & 1    &     &  
\end{tikzcd}
    \end{equation*}
    By Theorem \ref{thm:cellularHomology}, we get the following decomposition of the cellular $\AAA$-homology of~$\zka$.
    \begin{equation*}
        \mathbf{H}_i^{\mathrm{cell}}(\zka) = \begin{cases}
            \Z & i = 0, \\ 0 & i =1, \\ (\kmw_{3})^{\oplus 10} \oplus (\kmw_{4})^{\oplus 15} \oplus (\kmw_{5})^{\oplus 6} \oplus  (\Z_2 \otimes \kmw_{6}) & i=2, \\
            0 & i \geq 3.
        \end{cases}
    \end{equation*}
    This is an example of a space with integral torsion in its cellular $\AAA$-homology.
\end{example}

		\begin{remark}
		    Theorem \ref{thm:cellularHomology} could be computed using Corollary \ref{cor:stableSplittingContractible} as follows. By Corollary \ref{cor:stableSplittingContractible} there is an equivalence
       \begin{equation*}
          \Sigma (C_*^{\mathrm{cell}}(\AAA),C_*^{\mathrm{cell}}(\Gm))^K \simeq  \bigvee_{I\not \in K} \Sigma^2 \B{K_I}\wedge C_*^{\mathrm{cell}}(\Gm) ^{\wedge |I|}.
      \end{equation*}Since Lemma \ref{lem:homologySusp} tells us how the homology changes after suspending and Lemma \ref{lem:homologyGMsmash} in the case of $n=1$ tells us what smashing with $C_*^{\mathrm{cell}}(\Gm)$ does to the homology of a chain complex. Computing the homology and accounting for the extra suspension recovers Theorem \ref{thm:cellularHomology}.
		\end{remark}
		
		\subsection{Cohomology}
        Motivic cohomology is a bigraded cohomology theory for $\calH(k)$. For a motivic space~$X$ and a commutative ring $\mathbf{k}$, write $H^{i,j}_{\mathrm{Mot}}(X;\mathbf{k})$ for the motivic cohomology of $X$ with coefficients in $\mathbf{k}$. We denote the cohomology of the point by $A=H^{*,*}_{\mathrm{Mot}}(\Spec(k);\mathbf{k})$, thus for $p,q \in \Z$ we have $A^{p,q}=H^{p,q}_{\mathrm{Mot}}(\Spec(k);\mathbf{k})$. For a bigraded $A$-module $M$, and two integers $i,j \in \Z$ we define $M[i,j]$ to be $M$, but with the grading shifted to by $(i,j)$, that is $M[i,j]^{p,q}=M^{p-i,q-j}$. With this, the cohomology of motivic spheres can be described as an $A$-module
        \begin{equation*}
            H^{*,*}_{\mathrm{Mot}}(S^{p,q};\mathbf{k})\cong A \oplus A[p,q].
        \end{equation*}
        We may use the stable splitting of $\zka$ to describe the cohomology groups of $\zka$ for certain simplicial complexes $K$. We denote the reduced motivic cohomology of a motivic space $X$ by $\widetilde{H}^{i,j}_{\mathrm{Mot}}(X;\mathbf{k})$.
 
        \begin{theorem}
            \label{thm:zkacohomology}
            Let $K$ be a simplicial complex. The reduced motivic cohomology groups of $\zka$ are given by the isomorphism
            \begin{equation*}
                \widetilde{H}^{p,q}_{\mathrm{Mot}}(\zka;\mathbf{k}) \cong \bigoplus_{I\not\in K, |I|=j} \widetilde{H}^{p-j-1,q-j}_{\mathrm{Mot}}(\B{K_I};\mathbf{k}).
            \end{equation*}
        \end{theorem}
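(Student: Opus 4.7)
The strategy is to apply reduced motivic cohomology directly to the stable splitting provided by Theorem \ref{thm:stableMMACsplit}, which gives the equivalence
\begin{equation*}
\Sigma \zka \simeq \bigvee_{I \not \in K} \B{K_I} \wedge S^{|I|+2,|I|}
\end{equation*}
in $\Hk$. Motivic cohomology $\widetilde{H}^{*,*}_{\mathrm{Mot}}({-};\mathbf{k})$ is a reduced bigraded cohomology theory on pointed motivic spaces (represented by a motivic spectrum), so it converts simplicial suspension into a degree shift, sends wedge sums to direct sums, and interprets the smash product with $S^{a,b}$ as a $(a,b)$-shift of bidegree.

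First, I would use the suspension isomorphism $\widetilde{H}^{p,q}_{\mathrm{Mot}}(\zka;\mathbf{k}) \cong \widetilde{H}^{p+1,q}_{\mathrm{Mot}}(\Sigma\zka;\mathbf{k})$, and then apply the splitting together with the wedge axiom to rewrite the right-hand side as
\begin{equation*}
\bigoplus_{I\not\in K} \widetilde{H}^{p+1,q}_{\mathrm{Mot}}\bigl(\B{K_I}\wedge S^{|I|+2,|I|};\mathbf{k}\bigr).
\end{equation*}
Second, invoking the suspension isomorphism for the motivic sphere $S^{|I|+2,|I|}$ shifts bidegree by $(|I|+2,|I|)$, so each summand becomes
\begin{equation*}
\widetilde{H}^{p-|I|-1,\, q-|I|}_{\mathrm{Mot}}(\B{K_I};\mathbf{k}),
\end{equation*}
which after writing $j = |I|$ recovers precisely the claimed formula.

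There is no substantive obstacle here: the proof is a formal manipulation of the splitting using standard axioms of motivic cohomology. The only conceptual point worth flagging is that all wedge sums appearing here are finite (since $K$ has finitely many vertices), so commuting motivic cohomology past them requires no additional hypotheses.
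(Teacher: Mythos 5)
Your proof is correct and follows the same route as the paper: apply the stable splitting from Theorem \ref{thm:stableMMACsplit}, use the wedge axiom, and then invoke the bigraded suspension isomorphism $\widetilde{H}^{p,q}_{\mathrm{Mot}}(X;\mathbf{k})\cong\widetilde{H}^{p+i,q+j}_{\mathrm{Mot}}(X\wedge S^{i,j};\mathbf{k})$ to unwind the shifts. You simply spell out the bookkeeping more explicitly than the paper's terse two-line argument.
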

        \begin{proof}
            For a motivic space $X$, there is the relation $\widetilde{H}^{p,q}_{\mathrm{Mot}}(X ;\mathbf{k})=\widetilde{H}^{p+i,q+j}_{\mathrm{Mot}}( X\wedge S^{i,j};\mathbf{k})$. 
            Combining this with the stable splitting from Theorem \ref{thm:stableMMACsplit} yields the desired result.
        \end{proof}
         In the case where $\Sigma |K_I|$ (as a topological space) is a wedge of spheres, we can express the cohomology of $\zka$ just in terms of shifted copies of $A$.
         \begin{proposition}
         \label{prop:cohomologymodule-decomp}
             Let $K$ be a simplicial complex on the vertex set $[m]$ such that $\Sigma |K_I|$ is a wedge of spheres for all $I \not \in K$. Then there is an isomorphism of $A$-modules
             \begin{equation*}
                \widetilde{H}^{*,*}_{\mathrm{Mot}}(\zka;\mathbf{k}) \cong \bigoplus_{\substack{I \not \in K, |I|=j, \\ 0\leq i \leq m-2}} A[i+j+1,j]^{\oplus \mathrm{rank } \widetilde{H}^{i-j-1}{(K_I;\Z)}}.
             \end{equation*}
             
        \end{proposition}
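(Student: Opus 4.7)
The plan is to apply Theorem~\ref{thm:zkacohomology} and then use the wedge-of-spheres hypothesis to replace the motivic cohomology of each $\B{K_I}$ with shifted copies of $A$, assembling the pieces into the claimed $A$-module.

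The crucial first step will be upgrading the classical hypothesis $\Sigma |K_I| \simeq \bigvee_\alpha S^{n_\alpha}$ to a statement in $\Hk$. I would observe that $\B{K_I}$ is the image in $\Hk$ of $|K_I|$ under the (motivically localized) constant simplicial presheaf functor $c\colon \mathrm{sSet} \to \Hk$. Since $c$ is a symmetric monoidal left adjoint, it preserves colimits, smash products, wedges, and simplicial suspensions, so the classical decomposition lifts to an equivalence $\Sigma \B{K_I} \simeq \bigvee_\alpha S^{n_\alpha, 0}$ in $\Hk$, where the multiplicity of $S^{n,0}$ is $\mathrm{rank}\,\widetilde{H}^{n-1}(|K_I|;\Z)$ by the classical suspension isomorphism. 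Combining this with $\widetilde{H}^{*,*}_{\mathrm{Mot}}(S^{n,0};\mathbf{k}) \cong A[n,0]$ and desuspending once gives
\begin{equation*}
\widetilde{H}^{*,*}_{\mathrm{Mot}}(\B{K_I};\mathbf{k}) \cong \bigoplus_{n \geq 1} A[n-1,0]^{\oplus \mathrm{rank}\,\widetilde{H}^{n-1}(|K_I|;\Z)}.
\end{equation*}

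Plugging this into the formula of Theorem~\ref{thm:zkacohomology}, the contribution of $I \not \in K$ with $|I|=j$ becomes
\begin{equation*}
\widetilde{H}^{*-j-1,*-j}_{\mathrm{Mot}}(\B{K_I};\mathbf{k}) \cong \bigoplus_{n \geq 1} A[n+j,j]^{\oplus \mathrm{rank}\,\widetilde{H}^{n-1}(|K_I|;\Z)},
\end{equation*}
and the substitution $i = n-1$ rewrites each summand as $A[i+j+1,j]^{\oplus \mathrm{rank}\,\widetilde{H}^{i}(|K_I|;\Z)}$ with $i \geq 0$. The index bound $i \leq m-2$ is automatic: whenever $I \not \in K$, the subcomplex $K_I$ is a proper subcomplex of the full simplex on $I$, so $\dim |K_I| \leq |I|-2 \leq m-2$, and hence all nontrivial reduced cohomology of $|K_I|$ lies in this range.

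The main nontrivial input is the first step --- upgrading the topological wedge-of-spheres equivalence to a motivic one by using that $c$ preserves colimits --- after which the argument reduces to bookkeeping the bigrading shifts. Once the motivic cohomology of $\B{K_I}$ is identified with shifted copies of $A$, summing contributions across all $I \not \in K$ and matching shifts and multiplicities with the statement is routine.
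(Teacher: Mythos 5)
Your argument is correct and follows essentially the same route as the paper: apply Theorem~\ref{thm:zkacohomology}, express $\widetilde{H}^{*,*}_{\mathrm{Mot}}(\B{K_I};\mathbf{k})$ as shifted copies of $A$ using the wedge-of-spheres hypothesis, and reassemble with the degree shift from smashing with $S^{|I|+1,|I|}$. The paper compresses the first input into a single sentence (``its homotopy type is solely determined by the rank of its singular homology groups''), whereas you usefully make explicit the mechanism --- that the constant-presheaf functor is a colimit-preserving symmetric monoidal left adjoint, so the classical splitting $\Sigma|K_I|\simeq\bigvee S^{n_\alpha}$ transports to $\Hk$ --- which is exactly the content being glossed over. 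Your dimension bound $i\le m-2$ via $K_I\subseteq\partial\Delta^{|I|-1}$ is the same observation the paper invokes when truncating the sum.

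One thing to be careful about: your bookkeeping lands on $A[i+j+1,j]^{\oplus\operatorname{rank}\widetilde{H}^{i}(K_I;\Z)}$, which agrees with what the paper's own proof produces (the paper writes $\widetilde{H}^{*,*}_{\mathrm{Mot}}(\B{K_I})\cong\bigoplus_i A[i,0]^{\oplus\operatorname{rank}\widetilde H^i(K_I)}$ and then shifts), but does \emph{not} literally match the exponent $\widetilde{H}^{i-j-1}(K_I;\Z)$ printed in the proposition statement. Testing $K=\partial\Delta^n$ (so $\zka\simeq S^{2n+1,n+1}$, $j=n+1$, $\widetilde H^{n-1}(K)\cong\Z$) confirms that your version, with exponent $\widetilde{H}^{i}(K_I)$, yields the correct $A[2n+1,n+1]$, whereas the printed exponent would force $i=2n+1$ and give a wrong shift. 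So the statement as printed appears to have a typographical slip, and your ``matching with the statement is routine'' should really be ``my formula agrees with the intended statement and with the paper's internal computation, modulo that index typo.''
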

        \begin{proof}
            Fix $I\in K$. If $\Sigma\B{K_I}$ splits into a wedge of spheres, then its homotopy type is solely determined by the rank of its singular homology groups. This allows us to express the motivic cohomology of $\B{K_I}$ in terms of $A$.
            \begin{equation*}
                \widetilde{H}^{*,*}_{\mathrm{Mot}}(\B{K_I} ;\mathbf{k}) = \bigoplus_{i = 0}^{m-2} A[i,0]^{\oplus \mathrm{rank } \widetilde{H}^{i}{(K_I;\Z)}}
            \end{equation*}
            By summing over each $I \not \in K$ and shifting according to suspensions by $\Gm$, we recover the result. The reason the sum is not infinite is because $K$ is a simplicial complex is finite dimensional.        
        \end{proof}
        \begin{remark}
            Simplicial complexes such as flag complexes and triangulations of spheres satisfy the conditions of Proposition \ref{prop:cohomologymodule-decomp}.
        \end{remark}
		Classically, figuring out the ring structure of the cohomology of the moment-angle complex can be done with the Eilenberg--Moore spectral sequence. Unfortunately, we run into some problems when using the same approach motivically as there is no suitable Eilenberg--Moore spectral sequence available for us to use. 

        \begin{remark}
            There is a version of the Eilenberg-Moore spectral sequence due to Krishna \cite{Krishna}, but it only computes cohomology groups. In particular, Krishna provides for each integer $j$, a spectral sequences converging to $H^{*,j}_{\mathrm{Mot}}$.
        \end{remark}

        When the base field $k=\C$ we are able to say some things due to complex realization. Work by Levine \cite{LevineComparison} shows that complex realization is a symmetric monoidal functor from the stable motivic homotopy category to the stable homotopy category (of topological spaces). It follows  that complex realization induces for any commutative ring $\mathbf{k}$ a $\mathbf{k}$-algebra homomorphism
        \begin{equation*}
           \phi \colon \bigoplus_{i,j} H^{i,j}_{\mathrm{Mot}}(X;\mathbf{k}) \to \bigoplus_i H^{i}(X(\C);\mathbf{k}).
        \end{equation*}
        Since $\phi$ is a $\mathbf{k}$-algebra morphism lets us pull back cup products from~$H^{i}(X(\C);\mathbf{k})$ to~$H^{i,j}_{\mathrm{Mot}}(X;\mathbf{k})$. That is, if there exists $\alpha, \beta \in H^{*,*}_{\mathrm{Mot}}(\zka;\mathbf{k})$ such that~${\phi(\alpha)\smile \phi(\beta) \neq 0}$, then there exists $\gamma \in H^{*,*}_{\mathrm{Mot}}(\zka;\mathbf{k})$ such that~${\alpha \smile \beta = \gamma}$.
		
		\subsection{Betti numbers}
		A much coarser invariant than (co)homology are Betti numbers. By Theorem \ref{thm:TorIso} the cohomology of the moment-angle complex $\zk = (D^2,S^1)^K$ is isomorphic to a bigraded Tor-algebra   \begin{equation*}
            H^{2j-i}(\zk) \cong  \text{Tor}^{i,j}_{\Z[v_1,\ldots,v_n]}(\Z,\Z[K]).
        \end{equation*}
        The \emph{bigraded Betti numbers $b^{i,j}$} of $\zk$ are defined as follows
        \begin{equation*}
            b^{i,j}(\zk) := \text{rank } \text{Tor}^{i,j}_{\Z[v_1,\ldots,v_n]}(\Z,\Z[K]).
        \end{equation*}
		When $K$ is a simplicial complex where $\Sigma \B{K_I}$ is a wedge of spheres for all $I$, the description of the cohomology of $\zka$ from Proposition \ref{prop:cohomologymodule-decomp} allows us to define Betti numbers. \begin{definition}
            
		    The $(i,j)$th $\AAA$-Betti number of $\zka$ is defined as follows
        \begin{equation*}
            b_{\AAA}^{i,j}(\zka) := \begin{cases}
            \sum_{I \not \in K, |I|=j} \text{rank } \widetilde{H}^{i-j-1}{(K_I;\Z)} & (i,j) \neq (0,0), \\
                1 & (i,j)=(0,0).\end{cases}
        \end{equation*}
        The following example highlights the choice of definition. 
        \begin{example}
            Let $K =\partial \Delta ^n$, then $\zka \simeq S^{2n,n-1}$ and 
            \begin{equation*}
                b_{\AAA}^{i,j}(\zka) = \begin{cases}
                    1 & (i,j) = (0,0) \text { or } (i,j)=(2n,n-1), \\
                    0 & \text{otherwise}.
                \end{cases}
            \end{equation*}
        \end{example}
		\end{definition} 
		We now have two different bigraded Betti numbers related to (motivic) moment-angle complexes. The next step is to compare the two notions. 
		\begin{theorem}
  Let $K$ be a simplicial complex. Then there is an equality
			\begin{equation*}
				b_{\AAA}^{i,j}(\zka) = b^{-j,2i}(\zk).
			\end{equation*}
		\end{theorem}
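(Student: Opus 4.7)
The plan is to unfold both sides into explicit combinatorial sums and match them termwise. On the motivic side, by its very definition
\begin{equation*}
b_\AAA^{i,j}(\zka)=\sum_{\substack{|I|=j\\ I\not\in K}}\mathrm{rank}\,\widetilde{H}^{i-j-1}(K_I;\Z),
\end{equation*}
which encodes the wedge summands $\B{K_I}\wedge S^{|I|+2,|I|}$ appearing in the stable splitting of $\Sigma\zka$ in Theorem~\ref{thm:stableMMACsplit}, after desuspension. For the classical side I would invoke the fine (multigraded) form of Hochster's formula for the Stanley--Reisner ring,
\begin{equation*}
\mathrm{Tor}^{-p,\,2q}_{\Z[v_1,\ldots,v_m]}(\Z,\Z[K])\ \cong\ \bigoplus_{|W|=q}\widetilde{H}^{q-p-1}(K_W;\Z),
\end{equation*}
which refines Theorem~\ref{thm:TorIso} by tracking the internal $\Z^m$-grading on $\Z[K]$. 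This refinement can be reconstructed from Theorem~\ref{thm:TorIso} combined with Baskakov's wedge decomposition in Theorem~\ref{thm:cupProductMAC}, by identifying the internal Tor-degree $2q$ with the multiplicity contributed by full subcomplexes $K_W$ with $|W|=q$.

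Unfolding $b^{-j,2i}(\zk)$ through the fine Hochster formula rewrites it as a sum of ranks of reduced cohomology groups of full subcomplexes in the same cohomological degree $i-j-1$ that appears on the motivic side. Summands indexed by $W\in K$ with $W\neq\emptyset$ vanish because $K_W$ is then a full simplex and hence acyclic, so the classical sum effectively reduces to one over nonfaces of $K$, matching the restriction $I\not\in K$ in the definition of $b_\AAA^{i,j}$. The special case $(i,j)=(0,0)$ is handled by the convention $\widetilde{H}^{-1}(\emptyset;\Z)=\Z$, which recovers the value $1$ on both sides.

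The main obstacle I expect is notational rather than conceptual: one must carefully align two distinct bigradings. On the classical side the $\mathrm{Tor}$ bigrading pairs (homological degree, internal Stanley--Reisner degree with $\deg v_i=2$); on the motivic side the bigrading of $H^{*,*}_{\mathrm{Mot}}$ pairs (simplicial degree, Tate weight), with the Tate weight controlled by the $\Gm^{\wedge|I|}$-factors appearing in Theorem~\ref{thm:stableMMACsplit}. The identification $\Gm^{\wedge|I|}\simeq S^{|I|,|I|}$ matches the motivic Tate weight to the internal Tor-degree, while the homological Tor-degree matches the cohomological shift $i-j-1$ once one accounts for the extra suspension coming out of the stable splitting. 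With this dictionary established, the equality is reduced to a routine bookkeeping of summands of the form $\widetilde{H}^{i-j-1}(K_I;\Z)$ indexed by nonfaces $I\subseteq [m]$.
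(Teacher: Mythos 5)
Your approach matches the paper's exactly: the proof there simply quotes Hochster's (already multigraded) formula for $\mathrm{Tor}^{*,*}_{\Z[v_1,\ldots,v_m]}(\Z,\Z[K])$, unfolds $b^{-j,2i}(\zk)$ as a sum of ranks of $\widetilde H^{*}(K_I;\Z)$ over subsets $I$, and compares with the definition of $b_{\AAA}^{i,j}$. Your proposed detour of reconstructing the fine Hochster formula from Theorem~\ref{thm:TorIso} plus Baskakov's decomposition in Theorem~\ref{thm:cupProductMAC} is unnecessary (and would be somewhat circular, since one needs the multigraded structure on the resolution to assign a $K_I$-summand to a particular internal $\mathrm{Tor}$-degree); the paper just cites Hochster directly, which is cleaner.

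One caution on the ``routine bookkeeping'' you defer to the end: if you actually carry out the dictionary you sketch, the classical side unfolds as $b^{-j,2i}(\zk)=\sum_{|W|=i}\mathrm{rank}\,\widetilde H^{\,i-j-1}(K_W;\Z)$ (internal degree $2i$ picks out $|W|=i$), while the motivic side is, by definition, $\sum_{|I|=j,\,I\notin K}\mathrm{rank}\,\widetilde H^{\,i-j-1}(K_I;\Z)$ with $|I|=j$ coming from the Tate weight $\Gm^{\wedge|I|}$. The cohomological degrees of the subcomplexes agree, but the constraints $|W|=i$ versus $|I|=j$ do not; reconciling them forces a change of variables in the $\mathrm{Tor}$-bidegree (matching the Tate weight to the internal degree and solving for the homological degree). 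You should carry this step out explicitly rather than waving at it, because this is precisely where the bigrading dictionary has to be pinned down, and the two sums do not literally coincide summand-by-summand as written.
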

		\begin{proof}
			In the classical case, we have the following well known isomorphism of groups due to Hochster \cite[Theorem 5.1]{Hochster},
			\begin{equation*}
				\text{Tor}^{-j,2i}_{\Z[v_1,\ldots,v_n]}(\Z,\Z[K])\cong \bigoplus_{I \subset [m] : |I|=i}\widetilde{H}^{i-j-1}(K_I).
			\end{equation*}
            Thus the bigraded Betti numbers of $\zk$ can be expressed as
            \begin{equation*}
                b^{-j,2i}(\zk) = \sum_{I \subset [m] : |I|=i} \text{rank } \widetilde{H}^{i-j-1}(K_I;\Z) =  b_{\AAA}^{i,j}(\zka).\qedhere
            \end{equation*}
		\end{proof}
		\subsection{Euler characteristics}
        For any symmetric monoidal category $\calC$, there is a notion of categorical Euler characteristic of a dualizable object \cite{DoldPuppe}. Let $1_\calC$ be the unit of $\calC$. If $X$ is a dualizable object in $\calC$, then there exists a dual object $X^\vee$, an evaluation map $\epsilon\colon X \otimes X^\vee \to 1_\calC$, and a coevaluation map $\eta \colon 1_\calC \to  X \otimes X^\vee$.
        The categorical Euler characteristic of a dualizable object $X$ is the composition
        \begin{equation*}
            1_\calC \xrightarrow{\eta} X \otimes X^\vee \xrightarrow{\mathrm{id}_X \otimes \mathrm{id}_{X^\vee}} X \otimes X^\vee \xrightarrow{\epsilon} 1_\calC.
        \end{equation*}
        We denote the categorical Euler characteristic by is written as $\chi_\calC(X)$. We see that the categorical Euler characteristic takes values in $\mathrm{End}(1_\calC)$ endomorphism of the unit object of $\calC$.
        
        We denote the stable motivic homotopy category over a field $k$ by $\SH(k)$. In the case of the stable motivic homotopy category $\mathrm{End}(1_{\mathrm{SH}(k)})=\GW(k)$ \cite[Theorem 6.4.1]{MorelIntroduction}, where $\GW(k)$ denotes the Grothendieck--Witt ring of quadratic forms over $k$. The elements of $\GW(k)$ are formal differences of $k$-valued, non-degenerate, quadratic forms on finite dimensional $k$-vector spaces. For a unit $u \in k^\times$, we let $\langle u  \rangle \in \GW(k)$ denote the rank one quadratic form $x \mapsto ux^2$. In addition, $\GW(k)$ is generated by the rank one forms as a group. For units $u,v\in k^\times$, we have $\langle u \rangle \cdot \langle v \rangle = \langle uv \rangle$. For any $u \in k^{\times}$, there is an equivalence $\langle u^2 \rangle = \langle 1 \rangle$. Thus $\GW(\C) \cong \Z$ and $\GW(\R) \cong \Z \times \Z$. Examples of computations of categorical Euler characteristics in the stable homotopy category can be found in \cite{LevineComparison, LevineRaksit, HoyoisTraceFormula}.

        In motivic homotopy theory, there is an $\AAA$-Euler characteristic closely related to~$\chi_{\SH(k)}$. We say that a motivic space $X \in \Hk$ is \emph{dualizable} if the $\PP^1$-suspension spectrum~${\infsusp X_+}$ is dualizable in $\SH(k)$. Let $X$ be a dualizable object in $\Hk$, then the $\A^1$-Euler characteristic $\chi_{\A^1}(X)$  is defined as follows \begin{equation*}
			\chi_{\A^1}(X) := \chi_{\SH(k)}(\infsusp X_+).
		\end{equation*}
        For a finite  $CW$-space $X$, let $\chi(X)$ denote its Euler characteristic. Real and complex realization allows us to relate the $\AAA$-Euler characteristic to its classical counterpart. When~$k$ is a subfield of $\R$, we have the following relation between the $\AAA$-Euler characteristic and the Euler characteristic of the real and complex points of $X$ \cite[Remark~1.3] {Levine2020}. 
        \begin{equation*}
            \chi (X(\C)) = \text{rank } \chi_{\AAA}(X) \quad \text{and} \quad \chi (X(\R)) = \text{sign } \chi_{\AAA}(X).
        \end{equation*}
        If $k$ is just a subfield of $\C$, we only have the first relation for the complex points. 
        \begin{example}
            Recall that $\G_m(\C) \simeq S^1$ and $\G_m(\R) \simeq S^0$. Let $k$ be a field of characteristic different from $2$. Then~${\chi_{\AAA}(\Gm)= \langle 1 \rangle - \langle -1 \rangle}$. When $k=\C$, all units are squares. Thus $\langle -1 \rangle = \langle 1 \rangle$, and $\chi_{\AAA}(\Gm) = 0 = \chi(S^1)$. When $k=\R$, the signature of~$\langle 1 \rangle - \langle -1 \rangle$ is $2$ which coincides with $\chi(S^0)$.

        \end{example}
         The Euler characteristic of a moment-angle complex is not an interesting invariant, because by Example \ref{ex:EulerCharEx} $\chi(\zk)=0$ for any $K$. However, the real moment-angle complex can have nonzero Euler characteristic. The realization result above thus suggests that the motivic moment-angle complex does not have trivial Euler characteristic when~$k$ is a subfield of the real numbers. 

        The first thing we need is to show that $\zka$ admits an $\AAA$-Euler characteristic. 
        \begin{lemma}
            Let $K$ be a simplicial complex. Then $\zka$ is dualizable in $\calH(k)$.
        \end{lemma}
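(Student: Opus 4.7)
The plan is to establish dualizability of $\infsusp(\zka)_+ \in \SH(k)$ by writing it as a finite colimit of dualizable objects and then invoking the closure of dualizable objects under finite colimits in a stable symmetric monoidal $\infty$-category. I would proceed in three steps.

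First, I would verify dualizability of the basic building blocks. Since $\AAA$ is $\AAA$-contractible, $\infsusp(\AAA)_+ \simeq \mathbb{1}_{\SH(k)}$, which is trivially dualizable. For $\Gm$, choosing the basepoint $1 \in \Gm(k)$, there is a splitting $\infsusp(\Gm)_+ \simeq \mathbb{1}_{\SH(k)} \oplus S^{1,1}$; since $S^{1,1}$ is invertible in $\SH(k)$, both summands are dualizable and hence so is the direct sum.

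Next, I would upgrade this to dualizability of each $D(\sigma)$. The identity $(X \times Y)_+ \simeq X_+ \wedge Y_+$ in pointed motivic spaces, together with the strong symmetric monoidality of $\infsusp$, gives an equivalence $\infsusp D(\sigma)_+ \simeq \bigwedge_{i=1}^m \infsusp(Y_i)_+$ where each $Y_i$ is either $\AAA$ or $\Gm$. The smash product of finitely many dualizable objects in a symmetric monoidal $\infty$-category is dualizable, so $\infsusp D(\sigma)_+$ is dualizable for every $\sigma \in \calK$.

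Finally, by Definition \ref{def:oopolyhedralprod}, $\zka \simeq \colim_{\sigma \in \calK} D(\sigma)$ in $\calH(k)$, where $\calK$ is the finite face poset of $K$. The functor $(-)_+\colon \calH(k) \to \calH_*(k)$ is left adjoint to the forgetful functor and $\infsusp\colon \calH_*(k) \to \SH(k)$ is left adjoint to $\Omega^\infty$, so the composite $\infsusp(-)_+$ preserves colimits, yielding $\infsusp(\zka)_+ \simeq \colim_{\sigma \in \calK} \infsusp D(\sigma)_+$ in $\SH(k)$. The subtlest point, and the step I would most want to cite cleanly, is that in a stable presentably symmetric monoidal $\infty$-category the full subcategory of dualizable objects is closed under finite colimits: it contains zero, is closed under finite direct sums (which coincide with finite products by stability), and is closed under cofibers of maps between dualizable objects, since the cofiber of $f\colon X \to Y$ admits as its dual a shift of the fiber of $f^\vee\colon Y^\vee \to X^\vee$. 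Applied to our finite diagram, this delivers dualizability of $\infsusp(\zka)_+$, completing the proof. The anticipated obstacle is not conceptual but bibliographic, namely locating the tidiest reference for this last closure fact in the $\infty$-categorical setting.
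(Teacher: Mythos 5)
Your proof is correct but follows a genuinely different route from the paper. The paper invokes its own stable splitting result (Theorem \ref{thm:stableMMACsplit}) to write $\infsusp(\zka)_+$ explicitly as a finite wedge of $\PP^1$-spectra of the form $\Sigma^{a,b}\infsusp\B{K_I}$; each $\infsusp\B{K_I}$ is the image of a finite $CW$-spectrum under the symmetric monoidal constant-presheaf functor $\SH\to\SH(k)$ and is therefore dualizable, so the whole wedge is dualizable. Your argument instead bypasses the splitting entirely: you go straight from the colimit presentation $\zka \simeq \colim_{\sigma\in\calK} D(\sigma)$, note that each $\infsusp D(\sigma)_+$ is a finite smash of copies of $\mathbb{1}$ and $\mathbb{1}\oplus S^{1,1}$, hence dualizable, and then close under the finite colimit using thickness of the dualizable subcategory in a stable symmetric monoidal $\infty$-category. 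Both approaches are valid. The paper's route is economical in context since Theorem \ref{thm:stableMMACsplit} is already established and it keeps the dualizable pieces concrete (shifted finite $CW$-spectra). Your route is more self-contained and more elementary in the sense that it needs nothing beyond the definition of the polyhedral product and standard structural facts about $\SH(k)$; the only price is the foundational lemma you flag, that dualizable objects form a thick (idempotent-complete, closed under shifts and cofibers) subcategory of a stable symmetric monoidal $\infty$-category, which is standard and can be sourced, e.g., from the discussion of rigidity in Lurie's \emph{Higher Algebra} or from Hoyois's work cited in the paper.
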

        \begin{proof}
            Using Theorem \ref{thm:stableMMACsplit}, the  $\PP^1$-suspension spectrum of $\zka$ can be written as 
            \begin{equation*}
                \infsusp {\zka}_+ = \infsusp(S^{0,0})\vee \Sigma^{-1,0} \bigvee_{I \not \in K} \Sigma^{|I|+2,|I|}\infsusp\B{K_I}.
            \end{equation*}
            Let $\SH$ be the stable homotopy category of topological spaces. There is a symmetric monoidal functor $\SH \to \SH(k)$, which in particular preserves dualizable objects. Since the spaces $\infsusp \B{K_I}$ are images of finite $CW$-spectra under this map, they are also dualizable. Since spheres and wedges of dualizable spaces are dualizable, $\zka$ is dualizable. 
        \end{proof}
        Earlier, we saw that Davis had computed the Euler characteristic of polyhedral products (Theorem \ref{thm:DavisEuler}). We can recover the result for motivic moment-angle complexes. 
        \begin{theorem} 
        \label{thm:DavisA1EulerChar}
			The $\A^1$-Euler characteristic of the motivic moment-angle complex is 
			\begin{equation*}
				\chi_{\A^1}(\zka)=\sum_{\sigma \in K}\langle -1 \rangle^{|\sigma|}(\langle 1 \rangle-\langle -1 \rangle)^{m-|\sigma|}=\sum_{\sigma \in K} (-1)^{|\sigma|}2^{m-|\sigma|-1}(\langle 1 \rangle-\langle -1 \rangle).
			\end{equation*}
		\end{theorem}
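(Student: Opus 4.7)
The plan is to prove the motivic analog of Davis's formula (Theorem~\ref{thm:DavisEuler}) and then specialize to the pair $(\AAA, \Gm)$, after which the second equality is a short algebraic manipulation in $\GW(k)$. The argument rests on two standard properties of $\chi_{\AAA}$: multiplicativity on products, since $\infsusp(X\times Y)_+ \simeq \infsusp X_+ \wedge \infsusp Y_+$ and categorical Euler characteristics are multiplicative for the monoidal product in $\SH(k)$; and Mayer--Vietoris additivity on pushouts, since a pushout $B\cup_A C$ in $\Hk$ of dualizable spaces gives a cofiber sequence $\infsusp A_+ \to \infsusp B_+ \vee \infsusp C_+ \to \infsusp(B\cup_A C)_+$ in $\SH(k)$, to which additivity of $\chi$ on cofibers applies to yield $\chi_{\AAA}(B\cup_A C) = \chi_{\AAA}(B)+\chi_{\AAA}(C)-\chi_{\AAA}(A)$.

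With these in hand I would first establish, by induction on the number of faces of $K$, the motivic Davis formula
\begin{equation*}
	\chi_{\AAA}((\underline X,\underline A)^K) = \sum_{\sigma \in K} \prod_{i\in\sigma}(\chi_{\AAA}(X_i) - \chi_{\AAA}(A_i))\prod_{i\not\in\sigma}\chi_{\AAA}(A_i)
\end{equation*}
for any family of pairs of dualizable motivic spaces with dualizable polyhedral product. The base case $K = \{\emptyset\}$ reduces both sides to $\prod_i \chi_{\AAA}(A_i)$. For the inductive step, pick a maximal face $\sigma$ of $K$, set $K_1 = K\setminus\{\sigma\}$, $K_2 = \overline{\sigma}$ (viewed as a subcomplex of $K$ on $[m]$), and $L = K_1\cap K_2$, so that $K = K_1 \cup_L K_2$. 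Proposition~\ref{prop:PushoutPolyProd} upgrades this to a pushout of polyhedral products, to which Mayer--Vietoris additivity applies, while the right-hand side of the claimed formula is visibly additive under such decompositions since $\sum_{\sigma \in K} = \sum_{K_1} + \sum_{K_2} - \sum_{L}$. The inductive hypothesis covers $K_1$ and $L$ (both have strictly fewer faces than $K$), and for $K_2 = \overline{\sigma}$ the colimit reduces to $D(\sigma)$ by terminality of $\sigma$ in the face poset of $\overline{\sigma}$, so multiplicativity together with the multivariable binomial identity $\prod_{i\in\sigma}((\chi(X_i)-\chi(A_i))+\chi(A_i)) = \prod_{i\in\sigma}\chi(X_i)$ matches both sides.

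Specializing to $X_i = \AAA$, $A_i = \Gm$ and using $\chi_{\AAA}(\AAA) = \langle 1 \rangle$ (contractibility) together with $\chi_{\AAA}(\Gm) = \langle 1 \rangle - \langle -1 \rangle$ (computed earlier in this section), the difference $\chi_{\AAA}(X_i)-\chi_{\AAA}(A_i)$ becomes $\langle -1 \rangle$, producing the first equality of the theorem. For the second equality, the relation $\langle -1 \rangle^2 = \langle 1 \rangle$ in $\GW(k)$ gives $(\langle 1\rangle - \langle -1\rangle)^2 = 2(\langle 1\rangle - \langle -1\rangle)$, hence inductively $(\langle 1\rangle - \langle -1\rangle)^n = 2^{n-1}(\langle 1\rangle - \langle -1\rangle)$ for $n\geq 1$; and $\langle -1\rangle(\langle 1\rangle - \langle -1\rangle) = -(\langle 1\rangle - \langle -1\rangle)$ yields $\langle -1\rangle^{|\sigma|}(\langle 1\rangle - \langle -1\rangle) = (-1)^{|\sigma|}(\langle 1\rangle - \langle -1\rangle)$. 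Combining these identities rewrites each term $\langle -1\rangle^{|\sigma|}(\langle 1\rangle-\langle -1\rangle)^{m-|\sigma|}$ as $(-1)^{|\sigma|}2^{m-|\sigma|-1}(\langle 1\rangle - \langle -1\rangle)$ whenever $|\sigma| < m$.

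The main obstacle is dualizability bookkeeping: Mayer--Vietoris additivity for $\chi_{\AAA}$ requires that every intermediate polyhedral product $(\underline X,\underline A)^{K'}$ appearing during the induction be dualizable, not merely $\zka$ itself. For the case at hand this is handled uniformly by applying the preceding dualizability lemma (whose proof uses the stable splitting from Theorem~\ref{thm:stableMMACsplit}) to each subcomplex $K'$ of $K$ in turn.
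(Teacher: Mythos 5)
Your proof is correct but follows a genuinely different route from the paper. The paper's argument is geometric and short: it invokes the cellular filtration of $\zka$ from Proposition~\ref{prop:cellularStructure}, whose open strata at codimension $i$ are disjoint unions $\bigsqcup_{|\sigma|=i}\Gm^{\times(m-i)}$, and then applies Levine's open--closed additivity lemma for $\chi_{\AAA}$ of smooth schemes to sum the contributions stratum by stratum, with the twist by the trivial conormal bundle of codimension $|\sigma|$ producing exactly the factor $\langle -1\rangle^{|\sigma|}$. Your argument is combinatorial: you first prove a full motivic analog of Davis's Euler characteristic formula for arbitrary families of dualizable pairs by induction on the number of faces, decomposing $K = K_1 \cup_L K_2$ as in Proposition~\ref{prop:PushoutPolyProd} and invoking Mayer--Vietoris additivity of $\chi_{\AAA}$ on pushouts (which is legitimate since $\infsusp(-)_+$ preserves pushouts and a pushout square in $\SH(k)$ yields a cofiber sequence $A \to B\vee C \to D$), together with multiplicativity on products. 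The specialization to $(\AAA,\Gm)$ and the $\GW(k)$-algebra to get the second displayed equality are correct, including the caveat that $2^{m-|\sigma|-1}$ only makes sense when $|\sigma|<m$. What your approach buys is a strictly stronger statement, namely the motivic Davis formula for arbitrary dualizable pairs, which the paper never proves; what the paper's approach buys is brevity and the reuse of already-established geometric structure on $\zka$, bypassing the dualizability bookkeeping that your induction requires (though, as you note, that bookkeeping is handled uniformly by the dualizability lemma applied to each subcomplex).
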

		\begin{proof}
		 By using \cite[Lemma 1.4(3)]{Levine2020}, we can express the $\AAA$-Euler characteristic of a smooth scheme in terms of an open subscheme and its complement. By applying this to the cellular structure of $\zka$ from Proposition \ref{prop:cellularStructure}, we recover the theorem. 
		\end{proof}
  We will also give a computation of $\chi_{\AAA}(\zka)$ that uses the stable splitting, and hence describes the $\AAA$-Euler characteristic in terms of the Euler characteristic of full subcomplexes of $K$. For a simplicial complex $K$, we write $\chi(K)$ for it classical Euler characteristic. 
		\begin{theorem}
			Let $K$ be a simplicial complex. \label{thm:A1EulerChar}The $\A^1$-Euler characteristic of the motivic moment-angle complex is 
			\begin{equation*}
				\chi_{\A^1}(\zka) =  \langle 1 \rangle - \sum_{I \not \in K} (-1)^{|I|}(\chi(K_I)-1)\cdot \langle -1 \rangle^{|I|}.
			\end{equation*}
		\end{theorem}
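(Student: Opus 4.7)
The plan is to apply the stable splitting from Theorem \ref{thm:stableMMACsplit} and exploit that the $\AAA$-Euler characteristic, being the image of a categorical Euler characteristic in the stable motivic homotopy category, is additive on cofiber sequences (hence on wedges) and multiplicative on smash products of dualizable objects. First I would use the fact that $\zka$ is pointed by the distinguished point $(1,\dots,1)$ so that $\infsusp \zka_+ \simeq \infsusp S^0 \vee \infsusp \zka$ in $\SH(k)$, giving
\begin{equation*}
\chi_{\AAA}(\zka) = \langle 1 \rangle + \chi_{\SH(k)}(\infsusp \zka).
\end{equation*}
Applying $\infsusp$ to the equivalence $\Sigma \zka \simeq \bigvee_{I \not \in K} \B{K_I}\wedge S^{|I|+2,|I|}$ from Theorem \ref{thm:stableMMACsplit} and using $\chi(\Sigma Y) = -\chi(Y)$ together with additivity on wedges, this reduces to
\begin{equation*}
\chi_{\AAA}(\zka) = \langle 1 \rangle - \sum_{I \not \in K} \chi_{\SH(k)}\bigl(\infsusp \B{K_I} \wedge S^{|I|+2,|I|}\bigr).
\end{equation*}

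Next I would compute each factor in the summand using multiplicativity $\chi(A \wedge B) = \chi(A)\cdot \chi(B)$. For the sphere, write $S^{|I|+2,|I|} = S^{2,0} \wedge \Gm^{\wedge |I|}$; since $\chi_{\SH(k)}$ of the reduced simplicial $2$-sphere is $1$ and (as noted in the example preceding the theorem) the reduced $\AAA$-Euler characteristic of $\Gm$ equals $-\langle -1\rangle$, multiplicativity gives $\chi(\infsusp S^{|I|+2,|I|}) = (-1)^{|I|}\langle -1 \rangle^{|I|}$. For $\B{K_I}$, which by Definition \ref{def:simpcplxRealization} is the image in $\Hk$ of the simplicial set realization, I would invoke the fact that the symmetric monoidal functor $\SH \to \SH(k)$ from the classical stable homotopy category sends the integer Euler characteristic of a finite $CW$-complex to the same integer viewed in $\GW(k)$ as a multiple of $\langle 1 \rangle$; thus $\chi_{\SH(k)}(\infsusp \B{K_I}) = \chi(|K_I|) - 1 = \chi(K_I) - 1$.

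Combining these and assembling yields
\begin{equation*}
\chi_{\AAA}(\zka) = \langle 1 \rangle - \sum_{I \not \in K} (\chi(K_I)-1)\cdot (-1)^{|I|}\langle -1\rangle^{|I|},
\end{equation*}
as claimed. The main obstacle is justifying the two Euler characteristic inputs cleanly: the value $\chi(\Gm) = -\langle -1 \rangle$ (reduced), and the assertion that constant simplicial presheaves behave as in classical topology for Euler characteristic purposes. Both are standard, but worth making explicit by citing the comparison functor $\SH \to \SH(k)$ (as used earlier in the paper when discussing complex realization) together with the fact that dualizability and Euler characteristics are preserved by symmetric monoidal functors. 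A sanity check against Theorem \ref{thm:DavisA1EulerChar} via either realization $\mathrm{Re}_\C$ or $\mathrm{Re}_\R$ (recovering $\chi(\zk) = 0$ as the rank and $\chi(\R\zk)$ as the signature, in light of Davis's formula in Theorem \ref{thm:DavisEuler}) provides a consistency test for the signs.
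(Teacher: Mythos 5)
Your proposal is correct and follows essentially the same route as the paper: stable splitting from Theorem \ref{thm:stableMMACsplit}, then wedge additivity and smash multiplicativity of the Euler characteristic, with the inputs $\chi_{\SH(k)}(\infsusp S^{p,q}) = (-1)^p\langle -1\rangle^q$ and $\chi_{\SH(k)}(\infsusp\B{K_I}) = \chi(K_I)-1$. The only cosmetic difference is that you work with the reduced categorical Euler characteristic $\chi_{\SH(k)}(\infsusp-)$ throughout, whereas the paper carries the unreduced $\chi_{\AAA}$ and bundles the $\langle 1\rangle$ corrections into Lemmas \ref{lem:eulCharWedge} and \ref{lem:eulCharSmash}.
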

  We postpone the proof of the theorem until the end of the section.
  \begin{example}
      Let $K$ be a square, as in Example \ref{ex:squareCplx}. Then $K$ has four vertices and four edges. Theorem \ref{thm:DavisA1EulerChar}, we compute
      \begin{equation*}
          \chi_{\A^1}(\zka) = (2^3-4\cdot 2^2+4\cdot 2)\cdot(\langle 1 \rangle-\langle -1 \rangle) = 0.
      \end{equation*}
      Using Theorem \ref{thm:A1EulerChar}, we have three full subcomplexes of $K$ corresponding to the cases where $I =\{1,2\}, \{3,4\}$ or $\{1,2,3,4\}$. 
      \begin{equation*}
           \chi_{\A^1}(\zka) = \langle 1 \rangle - 2(\chi(S^0)-1)\cdot\langle 1 \rangle -(\chi(S^1)-1) \cdot\langle 1 \rangle = (1-2+1)\langle 1 \rangle = 0.
      \end{equation*}
  \end{example}
		 The $\A^1$-Euler characteristic also exhibits some nice properties like the classical Euler characteristic.
		\begin{lemma}
			\label{lem:eulCharWedge}
			$\chi_{\A^1}(X\vee Y) = \chi_{\A^1}(X) + \chi_{\A^1}(Y) - \langle 1 \rangle$.
		\end{lemma}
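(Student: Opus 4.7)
The plan is to reduce the statement to additivity of the categorical Euler characteristic in $\SH(k)$ by moving from the unreduced to the reduced setting. Since $\chi_{\A^1}$ is defined via $\Sigma_{\PP^1}^\infty ({-})_+$, the first step is to relate $\infsusp (X\vee Y)_+$ to $\infsusp X \vee \infsusp Y$. For any pointed space $(X,x_0)$, the inclusion of the disjoint basepoint $\{+\}$ together with $x_0$ gives a split cofiber sequence $S^0 \hookrightarrow X_+ \twoheadrightarrow X$ (the quotient collapses both points of $S^0$ to a single basepoint). Applying $\infsusp$, we obtain a splitting in $\SH(k)$
\begin{equation*}
\infsusp X_+ \simeq \infsusp X \vee \mathbf{1},
\end{equation*}
where $\mathbf{1} = \infsusp S^0$ is the unit of $\SH(k)$.

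Next I would apply this splitting both to the individual factors $X$, $Y$ and to the wedge $X\vee Y$. Since $\infsusp$ preserves wedges of pointed objects, $\infsusp(X\vee Y) \simeq \infsusp X \vee \infsusp Y$, and therefore
\begin{equation*}
\infsusp (X\vee Y)_+ \vee \mathbf{1} \simeq \infsusp X \vee \infsusp Y \vee \mathbf{1}\vee \mathbf{1} \simeq \infsusp X_+ \vee \infsusp Y_+.
\end{equation*}

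The final step is to invoke additivity of the categorical Euler characteristic on coproducts of dualizable objects in the symmetric monoidal stable $\infty$-category $\SH(k)$, which gives $\chi_{\SH(k)}(A \vee B) = \chi_{\SH(k)}(A) + \chi_{\SH(k)}(B)$, together with $\chi_{\SH(k)}(\mathbf{1}) = \langle 1 \rangle$. Applying this to the equivalence above yields
\begin{equation*}
\chi_{\A^1}(X\vee Y) + \langle 1\rangle = \chi_{\A^1}(X) + \chi_{\A^1}(Y),
\end{equation*}
which rearranges to the claimed formula. The main point to be careful about is that $X$ and $Y$ should be assumed dualizable (so that the $\A^1$-Euler characteristic is defined, and the splitting $\infsusp X_+ \simeq \infsusp X \vee \mathbf{1}$ lies within the dualizable subcategory); otherwise the argument is a routine bookkeeping exercise with the reduced/unreduced suspension spectrum formalism.
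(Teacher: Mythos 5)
Your proof is correct, and it takes a genuinely different route than the paper. The paper writes $X\vee Y$ as the homotopy pushout of $X \leftarrow * \rightarrow Y$, adds a disjoint basepoint to every corner, and invokes May's additivity theorem for the Euler characteristic over homotopy pushout squares, which yields $\chi(\infsusp (X\vee Y)_+) = \chi(\infsusp X_+) + \chi(\infsusp Y_+) - \chi(\infsusp S^0)$ directly. You instead use the split cofiber sequence $S^0 \to X_+ \to X$ to obtain $\infsusp X_+ \simeq \infsusp X \vee \mathbf{1}$, then apply additivity only for coproducts of dualizable objects. The ingredients differ in strength: additivity for coproducts is essentially formal in any stable symmetric monoidal $\infty$-category (a coproduct of dualizables is a direct sum, and traces are additive on direct sums), whereas the paper's invocation of May's theorem is the nontrivial statement that Euler characteristics are additive along arbitrary cofiber sequences. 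Your argument is therefore slightly more elementary for this particular lemma; the paper's approach has the advantage of citing a single general-purpose result. One minor point: you should indeed flag that $X$ and $Y$ are assumed dualizable throughout, but this is also implicit in the paper's statement since $\chi_{\A^1}$ is only defined for dualizable objects.
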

		\begin{proof}
			The wedge $X\vee Y$ may be written as the homotopy pushout 
			\begin{equation*}
                \begin{tikzcd}
        * \arrow[dr, phantom, "\scalebox{1}{$\ulcorner$}" , very near end, color=black]\ar[r]\ar[d] & X \ar[d] \\
        Y \ar[r] & X\vee Y  .
    \end{tikzcd}
            \end{equation*}
			In \cite{May2001} May proved that the following holds for the categorical Euler characteristic.
			\begin{equation*}
				\chi_{\SH(k)}(\infsusp X\vee Y) = \chi_{\SH(k)}(\infsusp X) + \chi_{\SH(k)}(\infsusp Y) - \chi_{\SH(k)}(\infsusp \Spec(k)) 
			\end{equation*}
			The diagram above is still a homotopy pushout after adding a disjoint basepoint, thus we get 
			
			\begin{equation*}
				\chi_{\A^1}(X\vee Y) = \chi_{\A^1}(X) + \chi_{\A^1}(Y) - \langle 1 \rangle. \qedhere
			\end{equation*}
		\end{proof}
		
		In classical topology we have the following relation between the Euler characteristic of a space $X$ and its suspension $\Sigma X$
		\begin{equation*}
			\chi (\Sigma X) = 2 - \chi(X).
		\end{equation*}
		Similar to the classical relation, we have the following relation of $\A^1$-Euler characteristics. 
		
		\begin{lemma}
			\label{lem:eulCharSmash}
			$\chi_{\A^1}(X \wedge S^{p,q}) = \langle 1 \rangle + (-1)^p\langle -1 \rangle^q(\chi_{\A^1}(X)-\langle 1 \rangle).$
		\end{lemma}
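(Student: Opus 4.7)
The plan is to reduce everything to the stable $\AAA$-Euler characteristic $\chi_{\SH(k)}$ and exploit its multiplicativity under smash products. The key observation is that for a pointed motivic space $Y$, there is an equivalence $Y_+ \simeq Y \vee S^{0,0}$ in pointed motivic spaces. Applying this to any pointed $X$ together with Lemma \ref{lem:eulCharWedge} (or equivalently May's additivity applied directly to the cofiber sequence $S^{0,0} \to Y_+ \to Y$) yields
\begin{equation*}
    \chi_{\A^1}(Y) \;=\; \chi_{\SH(k)}(\infsusp Y) + \langle 1 \rangle,
\end{equation*}
so passing between $\chi_{\A^1}$ and the stable categorical Euler characteristic costs only a shift by $\langle 1 \rangle$.

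Applying this identity to $Y = X \wedge S^{p,q}$ and using that $\chi_{\SH(k)}$ is multiplicative on dualizable objects (a standard property of traces in symmetric monoidal $\infty$-categories), I get
\begin{equation*}
    \chi_{\A^1}(X \wedge S^{p,q}) \;=\; \chi_{\SH(k)}(\infsusp X) \cdot \chi_{\SH(k)}(S^{p,q}) + \langle 1 \rangle.
\end{equation*}
The first factor is $\chi_{\A^1}(X) - \langle 1 \rangle$ by the previous paragraph. It remains to compute $\chi_{\SH(k)}(S^{p,q})$.

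For this, I would decompose $S^{p,q} \simeq (S^{1,0})^{\wedge(p-q)} \wedge \Gm^{\wedge q}$ and use multiplicativity once more. The simplicial circle $S^{1,0}$ is the categorical suspension of the unit, so $\chi_{\SH(k)}(S^{1,0}) = -\langle 1 \rangle$. For $\Gm$, the computation $\chi_{\A^1}(\Gm) = \langle 1 \rangle - \langle -1 \rangle$ (recorded in the example preceding this lemma) together with the reduction above gives $\chi_{\SH(k)}(\infsusp \Gm) = -\langle -1 \rangle$. Combining these and using $\langle -1 \rangle^2 = \langle 1 \rangle$ yields
\begin{equation*}
    \chi_{\SH(k)}(S^{p,q}) \;=\; (-\langle 1 \rangle)^{p-q}\cdot (-\langle -1 \rangle)^{q} \;=\; (-1)^p \langle -1 \rangle^q.
\end{equation*}
Substituting back gives the claimed formula.

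I do not anticipate a real obstacle: the proof is essentially a bookkeeping exercise once the basepoint-adding trick $Y_+ \simeq Y \vee S^{0,0}$ is in hand. The only mildly delicate point is tracking signs and distinguishing the additive inverse $-\langle 1 \rangle$ in $\GW(k)$ from the rank-one form $\langle -1 \rangle$; but since the categorical Euler characteristic of a suspension in $\SH(k)$ introduces $-\langle 1 \rangle$ while $\Gm$ contributes $\langle -1 \rangle$, keeping these separate throughout the multiplication is straightforward.
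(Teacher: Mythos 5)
Your proposal is correct and follows essentially the same route as the paper: reduce $\chi_{\A^1}$ to the stable $\chi_{\SH(k)}$ (costing a $\langle 1\rangle$), invoke multiplicativity of $\chi_{\SH(k)}$ under smash, and plug in $\chi_{\SH(k)}(S^{p,q}) = (-1)^p\langle -1\rangle^q$. The only cosmetic difference is that you derive the value of $\chi_{\SH(k)}(S^{p,q})$ from scratch via the decomposition $S^{p,q}\simeq (S^{1,0})^{\wedge(p-q)}\wedge\Gm^{\wedge q}$ and $\chi_{\SH(k)}(\infsusp\Gm)=-\langle-1\rangle$, while the paper cites \cite[Lemma 1.2]{Levine2020} for that same identity.
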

		\begin{proof}
  We have
			\begin{equation*}
				\chi_{\A^1}(X \wedge S^{p,q}) = \chi_{\SH(k)}(\infsusp (X \wedge S^{p,q})) + \langle 1 \rangle.
			\end{equation*}
			For the categorical Euler characteristic, May \cite{May2001} proved that \begin{equation*}
				\chi_{\SH(k)}(\infsusp X \wedge S^{p,q}) = \chi_{\SH(k)}(\infsusp X)\cdot\chi_{\SH(k)}(\infsusp S^{p,q}).
			\end{equation*}
			We may rewrite $\chi_{\SH(k)}(\infsusp X) = \chi_{\A^1}(X)-\langle 1\rangle $ and use the $\chi_{\SH(k)}(\infsusp S^{p,q}) = (-1)^p\langle -1 \rangle^q$ by \cite[Lemma 1.2]{Levine2020} to get the claimed result. 
		\end{proof}
		We can now prove Theorem \ref{thm:A1EulerChar}.
		\begin{proof}[Proof of Theorem \ref{thm:A1EulerChar}]
			By Lemma \ref{lem:eulCharSmash}, the $\A^1$-Euler characteristic of $\zka$ can be expressed in the following way\begin{equation*}
				\chi_{\A^1}(\zka) = 2\langle 1 \rangle - \chi_{\AAA}(\Sigma \zka).
			\end{equation*}
			Since $\Sigma\zka$ splits into a wedge sum by Theorem \ref{thm:stableMMACsplit}, we get 
			\begin{align*}
				\chi_{\A^1}(\zka) &= 2\langle 1 \rangle - \chi_{\A^1}(\bigvee_{I \not \in K} \Sigma^2 \B{K_I}\wedge \Gm^{\wedge |I|}) \\
				&= 2\langle 1 \rangle - \chi_{\A^1}(\bigvee_{I \not \in K} \B{K_I}\wedge S^{|I|+2,|I|}).
			\end{align*}
			We continue by applying Lemma \ref{lem:eulCharWedge} to the wedge sum resulting in
			\begin{equation*}
				\chi_{\A^1}(\zka) = \langle 1 \rangle - \sum_{I \not \in K}\left(\chi_{\A^1}(\B{K_I}\wedge S^{|I|+2,|I|}) - \langle 1 \rangle \right).
			\end{equation*}
			We then apply Lemma \ref{lem:eulCharSmash} to get
			\begin{equation*}
				\chi_{\A^1}(\zka) = \langle 1 \rangle - \sum_{I \not \in K} (-1)^{|I|}\langle -1 \rangle^{|I|}(\chi_{\A^1}(\B{K_I})-\langle 1 \rangle).
			\end{equation*}
			For any simplicial complex $K$, we have $\chi_{\A^1}(\B{K}) = \chi(K)\cdot \langle 1 \rangle$. This allows us to rewrite the result above to
			\begin{equation*}
				\chi_{\A^1}(\zka) = \langle 1 \rangle - \sum_{I \not \in K} (-1)^{|I|}(\chi(K_I)-1)\cdot \langle -1 \rangle^{|I|}. \qedhere
			\end{equation*}
			
		\end{proof}

\addcontentsline{toc}{chapter}{Bibliography}

\bibliographystyle{abbrv}

\bibliography{mylib}

\cleardoublepage

	\end{document}